\newtheorem{X}{X}[section]
\newtheorem{conjecture}[X]{Conjecture}
\newtheorem{corollary}[X]{Corollary}
\newtheorem{lemma}[X]{Lemma}
\newtheorem*{*lemma}{Lemma}
\newtheorem{proposition}[X]{Proposition}
\newtheorem*{*proposition}{Proposition}
\newtheorem{theorem}[X]{Theorem}
\newtheorem{aside}[X]{Aside}
\newtheorem{definition}[X]{Definition}
\newtheorem{rconjecture}[X]{Rationality Conjecture}
\newtheorem*{*definition}{Definition}
\newtheorem*{*example}{Example}
\newtheorem{remark}[X]{Remark}
\newtheorem{plain}[X]{}
\theoremstyle{nonumberplain}
\newtheorem{proof}{Proof}
\let\oldvec=\vec
\renewcommand{\vec}[1]{\reflectbox{\ensuremath{\oldvec{\reflectbox{\ensuremath{#1}}}}}}
\titleformat*{\subsection}{\large\scshape}
\titleformat*{\subsubsection}{\slshape}
\titleformat*{\section}{\LARGE\bfseries}
\titleformat*{\subsection}{\Large\itshape}
\titleformat*{\subsubsection}{\scshape}
\titleformat*{\paragraph}{\itshape}
\setlist{nolistsep}
\newcommand{\eb}[1]{{\itshape\bfseries#1}}
\renewcommand{\emph}{\eb}
\let\cite\citealt
\newcommand{\bcomment}{}
\newcommand{\bfootnotesize}{\begin{footnotesize}}\newcommand\efootnotesize{\end{footnotesize}}
\newcommand{\bquote}{\begin{quote}}\newcommand\equote{\end{quote}}
\newcommand{\bsmall}{\begin{small}}\newcommand\esmall{\end{small}}
\newcommand{\btable}{\begin{table}}\newcommand{\etable}{\end{table}}
\newcommand{\edocument}{
\begin{document}

\title{Motivic complexes and special values of zeta functions}
\author{James S. Milne
\and Niranjan Ramachandran\thanks{Partly supported by NSF and Graduate Research
Board (UMD)}}
\date{November 13, 2013}
\maketitle

\begin{abstract}
Beginning with the conjecture of Artin and Tate in 1966, there has been a
series of successively more general conjectures expressing the special values
of the zeta function of an algebraic variety over a finite field in terms of
other invariants of the variety. In this article, we present the ultimate such
conjecture, and provide evidence for it. In particular, we enhance Voevodsky's
$\mathbb{Z}{}[1/p]$-category of \'{e}tale motivic complexes with a
$p$-integral structure, and show that, for this category, our conjecture
follows from the Tate and Beilinson conjectures. As the conjecture is stated
in terms of motivic complexes, it (potentially) applies also to algebraic
stacks, log varieties, simplicial varieties, etc..

\end{abstract}

\tableoc

\section{Introduction}

\subsection{Motivating examples}

We begin by reviewing two statements from the 1960s concerning the special
values of the zeta functions of varieties over finite fields. Our goal has
been to find the ultimate generalization of these statements, and to provide
persuasive evidence for it.

Recall that the zeta function of an algebraic variety $X$ over a finite field
$\mathbb{F}{}_{q}$ is the formal power series $Z(X,t)\in\mathbb{Q}{}[[t]]$
such that%
\begin{equation}
\log(Z(X,t))=\sum_{n>0}\frac{N_{n}t_{n}}{n} \label{eq1}%
\end{equation}
with $N_{n}=\#(X(\mathbb{F}{}_{q^{n}}))$, and that Dwork
(1960)\nocite{dwork1960} proved that $Z(X,t)\in\mathbb{Q}{}(t)$.

Let $X$ be a smooth projective surface over a finite field $\mathbb{F}{}_{q}$.
The N\'{e}ron-Severi group $\mathrm{NS}(X)$ of $X$ is finitely generated, and
the Tate conjecture says that its rank $\rho$ is the order of the pole of
$Z(X,t)$ at $t=q^{-1}$. Write%
\[
Z(X,t)=\frac{P_{1}(X,t)P_{3}(X,t)}{(1-t)P_{2}(X,t)(1-q^{2}t)},\quad
P_{i}(X,t)\in\mathbb{Z}{}[t].
\]
Then the Artin-Tate conjecture says that the Brauer group of $X$ is finite,
and that its order $[\Br(X)]$ satisfies%
\[
\lim_{t\rightarrow q^{-1}}\frac{P_{2}(X,t)}{(1-qt)^{\rho}}=\frac{[\Br(X)]\cdot
D}{q^{\alpha(X)}\cdot\lbrack\mathrm{NS}(X)_{\mathrm{tors}}]^{2}}%
\]
where $D$ is the discriminant of the intersection pairing on $\mathrm{NS}(X)$
and%
\[
\alpha(X)=\chi(X,\mathcal{O}{}_{X})-1+\dim(\text{\textrm{PicVar}}(X)).
\]

Now consider two abelian varieties $A$ and $B$ over a finite field, and write%
\begin{align*}
Z(A,t)  &  =\frac{P_{1}(A,t)\cdots}{(1-t)P_{2}(A,t)\cdots},\quad
P_{1}(A,t)=\prod\nolimits_{i}(1-a_{i}t)\\
Z(B,t)  &  =\frac{P_{1}(B,t)\cdots}{(1-t)P_{2}(B,t)\cdots},\quad
P_{1}(B,t)=\prod\nolimits_{j}(1-b_{j}t)\text{.}%
\end{align*}
Weil showed that $\Hom(A,B)$ is a finitely generated $\mathbb{Z}{}$-module,
and the Tate conjecture (proved by Tate in this case) says that%
\[
\rank(\Hom(A,B))=\#\{(i,j)\mid a_{i}=b_{j}\}.
\]
According to \cite{milne1968}, the group $\Ext^{1}(A,B)$ is finite, and its
order satisfies%
\[
\prod_{a_{i}\neq b_{j}}\left(  1-\frac{a_{i}}{b_{j}}\right)  =[\Ext^{1}%
(A,B)]\cdot D\cdot q^{-\dim(A)-\dim(B)}%
\]
where $D$ is the discriminant of the pairing%
\[
\Hom(A,B)\times\Hom(B,A)\overset{\circ}{\longrightarrow}%
\End(A)\overset{\mathrm{trace}}{\longrightarrow}\mathbb{Z}{}\text{.}%
\]

\subsection{Statement of the conjecture}

Let $k$ be a perfect field. Throughout the first six sections of the article,
$\mathsf{DM}(k)$ is a triangulated category\footnote{In fact, we work
throughout with pretriangulated differential graded categories. This aspect
will be ignored in the Introduction} of \textquotedblleft motivic
complexes\textquotedblright\ equipped with exact \textquotedblleft
realization\textquotedblright\ functors
\begin{align*}
r_{\ell}\colon\mathsf{DM}(k)  &  \rightarrow\mathsf{D}_{c}^{b}(k,\mathbb{Z}%
{}_{\ell})\text{, all }\ell\neq p,\\
r_{p}\colon\mathsf{DM}(k)  &  \rightarrow\mathsf{D}_{c}^{b}(R)
\end{align*}
where $\mathsf{D}_{c}^{b}(k,\mathbb{Z}{}_{\ell})$ is the $\ell$-adic
triangulated category (Ekedahl; see \S 4) and $\mathsf{D}_{c}^{b}(R)$ is the
triangulated category of coherent complexes of graded modules over the Raynaud
ring (Ekedahl-Illusie-Raynaud; see \S 5). There are Tate twists in
$\mathsf{DM}(k)$, compatible with the realization functors. We require that
$\mathsf{DM}(k)$ have an internal Hom, $R\underline{\Hom}(-,-)$. We do not
require $\mathsf{DM}(k)$ to have a $t$-structure. The $\Ext$ of two objects
$M,N$ in $\mathsf{DM}(k)$ is defined by the usual formula%
\[
\Ext^{j}(M,N)=\Hom_{\mathsf{DM}(k)}(M,N[j]).
\]

In \S 7 (resp. \S 8) we construct a candidate for $\mathsf{DM}(k)$ based on
Voevodsky's category of geometric motives (resp. the conjectural theory of
rational Tate classes).

Now let $k$ be a finite field with $q$ elements. Using the finiteness of $k$,
we construct (in \S 2) a canonical complex%
\[
E(M,N,r)\colon\quad\cdots\rightarrow\Ext^{j-1}(M,N(r))\rightarrow
\Ext^{j}(M,N(r))\rightarrow\Ext^{j+1}(M,N(r))\rightarrow\cdots
\]
of abelian groups for each pair $M,N$ in $\mathsf{DM}(k)$. Here $(r)$ denotes
the Tate twist in $\mathsf{DM}(k)$.

We expect that each object $P$ of $\mathsf{DM}(k)_{\mathbb{Q}{}}$ has a zeta
function $Z(P,t)\in\mathbb{Q}{}(t)$ compatible with the realization functors
(see \S 3).

Attached to each $P$ in $\mathsf{D}_{c}^{b}(R)$, there is a bounded complex
$R_{1}\otimes_{R}^{L}P$ of graded $k$-vector spaces whose cohomology groups
have finite dimension. The Hodge numbers $h^{i,j}(P)$ of $P$ are defined to be
the dimensions of the $k$-vector spaces $H^{j}(R_{1}\otimes_{R}^{L}P)^{i}$
(see \S 5).

\begin{conjecture}
\label{a1}Let $M,N\in\mathsf{DM}(k)$ and let $r\in\mathbb{Z}{}$. Let
$P=R\underline{\Hom}(M,N)$.

\begin{enumerate}
\item The groups $\Ext^{j}(M,N(r))$ are finitely generated $\mathbb{Z}{}%
$-modules for all $j$, and the alternating sum of their ranks is zero.

\item The zeta function $Z(P,t)$ of $P$ has a pole at $t=q^{-r}$ of order%
\[
\rho=\sum\nolimits_{j}(-1)^{j+1}\cdot j\cdot\rank_{\mathbb{Z}{}}%
(\Ext^{j}(M,N(r))).
\]

\item The cohomology groups of the complex $E(M,N,r)$ are finite, and the
alternating product $\chi^{\times}(M,N(r))$ of their orders satisfies%
\[
\left\vert \lim_{t\rightarrow q^{-r}}Z(P,t)(1-q^{r}t)^{\rho}\right\vert
=\chi^{\times}(M,N(r))\cdot q^{\chi(P,r)}%
\]
where%
\[
\chi(P,r)=\sum_{i,j\,\,(i\leq r)}(-1)^{i+j}(r-i)h^{i,j}(r_{p}(P))\text{.}%
\]
\noindent
\end{enumerate}
\end{conjecture}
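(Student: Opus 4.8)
The plan is to prove Conjecture~\ref{a1} conditionally, deriving it from the Tate conjecture (with the semisimplicity of Frobenius on $\ell$-adic cohomology) and the Beilinson conjecture, which over a finite field is Parshin's conjecture that the higher $K$-groups of a smooth projective variety are torsion. Writing $\1$ for the unit object of $\mathsf{DM}(k)$, the $\otimes$--$R\underline{\Hom}$ adjunction gives $\Ext^{j}(M,N(r))=\Hom_{\mathsf{DM}(k)}(\1,P(r)[j])$ for $P=R\underline{\Hom}(M,N)$, so the entire conjecture is a statement about the motivic cohomology $H^{j}(k,P(r)):=\Ext^{j}(\1,P(r))$ of the single object $P$, its zeta function $Z(P,t)$, and the Hodge numbers of $r_{p}(P)$. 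Each of the three quantities the conjecture evaluates --- $\sum_{j}(-1)^{j}\rank\Ext^{j}(\1,P(r))$, $\ord_{t=q^{-r}}Z(P,t)$, and $\chi^{\times}(M,N(r))\cdot q^{\chi(P,r)}$ --- is additive over distinguished triangles in $P$ (long exact $\Ext$-sequences, multiplicativity of $Z$, additivity of the Hodge numbers and of the cohomology of $E$). Hence, once the underlying finiteness assertions are known for a set of generators and propagated along triangles, it suffices to treat $P=h(X)(n)[m]$ with $X$ smooth projective over $k$; Tate-twisting and shifting reduce this further to $P=h(X)$, where the relevant group is the higher Chow group $H^{j}_{\mathrm{mot}}(X,\mathbb{Z}(r))=\mathrm{CH}^{r}(X,2r-j)$ and the zeta function is $Z(X,t)$.

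For $X$ smooth projective, parts (a) and (b) are the standard consequences of Tate and Beilinson: Parshin's conjecture forces $H^{j}_{\mathrm{mot}}(X,\mathbb{Q}(r))=0$ for $j\neq 2r$, the known finiteness of torsion in motivic cohomology (Soul\'{e}, Geisser--Levine, Kahn) then gives finite generation of each $H^{j}_{\mathrm{mot}}(X,\mathbb{Z}(r))$, and the Tate conjecture --- together with the coincidence of numerical and rational equivalence rationally, which is the Beilinson input --- matches $\rank\mathrm{CH}^{r}(X)$ with the order of the relevant pole of $Z(X,t)$. The weighting $(-1)^{j+1}j$ in (b) is exactly what the Tate-twist and shift bookkeeping of the first step produces.

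For part (c) I would first read $E(M,N,r)$ cohomologically: it is the complex $\bigl(\Ext^{\bullet}(M,N(r)),\ \cup\,e\bigr)$, where $e\in\Ext^{1}(\1,\1)$ is the canonical class coming from the Weil group $W(\bar{k}/k)\cong\mathbb{Z}$, so its cohomology is Weil-\'{e}tale cohomology; its finiteness encodes the Tate conjecture together with the non-degeneracy of the pairing induced by $\cup\,e$ between free parts, and $\chi^{\times}(M,N(r))=\prod_{j}[H^{j}(E)]^{(-1)^{j}}$ packages both the torsion in the $\Ext^{j}$ and the discriminants of those pairings --- the analogue of the $D\cdot[\mathrm{NS}(X)_{\mathrm{tors}}]^{-2}$ factor in Artin--Tate. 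Splitting $\bigl|\lim_{t\to q^{-r}}Z(P,t)(1-q^{r}t)^{\rho}\bigr|$ prime by prime, the $\ell$-adic realization $r_{\ell}$, the Tate conjecture and Frobenius semisimplicity identify the $\ell$-part ($\ell\neq p$) with the $\ell$-part of $\chi^{\times}$, by the $\ell$-adic computation of special values already carried out for varieties over finite fields by Milne and Geisser. The $p$-part is the genuinely new point: the $p$-adic valuation of the leading term, invisible to $\chi^{\times}$, must equal the correction $q^{\chi(P,r)}$ with $\chi(P,r)=\sum_{i\leq r}(-1)^{i+j}(r-i)h^{i,j}(r_{p}(P))$. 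I would extract this from the Ekedahl--Illusie--Raynaud description of $\mathsf{D}^{b}_{c}(R)$: the Hodge numbers $h^{i,j}(r_{p}(P))$ read off the Hodge--Witt filtration, the slope spectral sequence relates the slopes of crystalline cohomology both to $\ord_{t=q^{-r}}Z(P,t)$ and to the $p$-adic size of its leading coefficient, and the slopes $<r$ contribute precisely the $(r-i)$-weighted sum --- the motivic generalization of Milne's $q^{\chi(X,\mathcal{O}_{X},r)}$ term. The $p$-integral normalization of $r_{p}$ built in \S\S 5--7 is essential here, since what the count sees is the full Hodge--Witt structure, not its $\mathbb{Z}[1/p]$-shadow.

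The main obstacle, and the step I expect to absorb most of the work, is this $p$-part of (c): one must verify that the abstractly defined realization $r_{p}$ equips every object of $\mathsf{DM}(k)$ with the correct integral $p$-adic (de Rham--Witt) structure --- not merely after inverting $p$ --- and that the resulting slope-and-Hodge count both reproduces the $p$-adic valuation of the zeta leading coefficient and is compatible with the d\'{e}vissage of the first step, despite $\mathsf{DM}(k)$ having no $t$-structure. By comparison, the $\ell$-adic side and parts (a) and (b) are largely a repackaging of facts that become available once the Tate and Beilinson conjectures are granted.
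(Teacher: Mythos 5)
First, note that \ref{a1} is stated as a conjecture; what the paper actually establishes is the conditional result \ref{d7}, under essentially the hypotheses you adopt (Tate, Beilinson, semisimplicity of Frobenius, plus semisimplicity of $\mathsf{DM}(k)_{\mathbb{Q}}$). Your endgame coincides with the paper's: the $\ell$-adic part of (c) is Proposition \ref{a26}, and the $p$-adic part, with the Hodge--Witt correction $q^{\chi(P,r)}$, is Proposition \ref{b7} (quoted from \cite{milneR2013}). The structural difference is where the d\'evissage to smooth projective varieties happens. The paper proves the special-value identity for \emph{arbitrary} objects of the local categories $\mathsf{D}_{c}^{b}(k,\mathbb{Z}_{\ell})$ and $\mathsf{D}_{c}^{b}(R)$ by direct computation with Frobenius eigenvalues ($\rank\Ext^{j}=\rho_{j-1}+\rho_{j}$, and $\chi^{\times}=\prod_{j} z(f^{j})^{(-1)^{j}}$ via Lemma 5.2 of \cite{milneR2013}), and then uses Tate $+$ Beilinson $+$ rigidity only to show that the comparison maps $r_{l}(M,N)$ are isomorphisms (\ref{d9}, \ref{d6}); that last statement \emph{is} stable under distinguished triangles by the five lemma, so generation of $\mathsf{DM}(k)$ by motives of smooth varieties suffices there.

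The genuine gap in your route is the claim that all three quantities are ``additive over distinguished triangles.'' The order of the pole of $Z(P,t)$ is additive, but $\rho=\sum_{j}(-1)^{j+1}\cdot j\cdot\rank\Ext^{j}(M,N(r))$ is a \emph{secondary} Euler characteristic and $\chi^{\times}(M,N(r))$ a multiplicative one: neither passes through the long exact $\Ext$-sequence of a triangle without controlling the boundary maps (and, for $\chi^{\times}$, the interaction of the differentials $\cup\,e$ with the connecting maps). This is exactly the bookkeeping the paper's local-then-compare architecture is designed to avoid, and repairing it would amount to reproving the local propositions. A secondary imprecision: identifying $\Ext^{j}(\1,h(X)(r))$ with the higher Chow group $\mathrm{CH}^{r}(X,2r-j)$ conflates Zariski/Nisnevich with (Weil-)\'etale motivic cohomology; these differ integrally (notably at $p$ and through the Weil-group descent), which is the whole reason the paper builds $\mathsf{DM}(k)$ from $\mathsf{DM}_{\mathrm{et}}$ with the $p$-integral structure of \S 7. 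Finally, your reading of the Beilinson input differs from the paper's: no Parshin-type vanishing is invoked; rather, Beilinson (rational $=$ numerical equivalence) is used to make a realization functor faithful, i.e.\ to kill the uniquely divisible subgroups $U^{i}$ left over by Theorem \ref{d9}, which is what upgrades ``pseudofinitely generated'' in Theorem \ref{d5} to the finite generation asserted in (a).
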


Assume that $\mathsf{\mathsf{DM}}(k)$ has a tensor structure, and let $\1$ be
the identity object for this structure. Then $R\underline{\Hom}(\1,N)\simeq
N$. As we now explain, when we take $M$ to be $\1$, the statement of
Conjecture \ref{a1} simplifies, and it doesn't require the existence of
internal Homs.

We define the \emph{absolute} \emph{cohomology groups} of $P\in\ob\mathsf{DM}%
(k)$ by%
\[
H_{\mathrm{abs}}^{j}(P,r)=\Hom_{\mathsf{DM}(k)}(\1,P[j](r))
\]
(cf. \cite{deligne1994}, 3.2). With $(M,N)=(\1,P)$, the complex $E(M,N,r)$
becomes a complex%
\[
H_{\mathrm{abs}}^{\bullet}(P,r)\colon\quad\cdots\rightarrow H_{\mathrm{abs}%
}^{j-1}(P,r)\rightarrow H_{\mathrm{abs}}^{j}(P,r)\rightarrow H_{\mathrm{abs}%
}^{j+1}(P,r)\rightarrow\cdots.
\]
and Conjecture \ref{a1} becomes the following statement.

\begin{conjecture}
\label{a2}

\begin{enumerate}
\item The groups $H_{\mathrm{abs}}^{j}(P,r)$ are finitely generated
$\mathbb{Z}{}$-modules for all $j$, and the alternating sum of their ranks is zero.

\item The zeta function $Z(P,t)$ of $P$ has a pole at $t=q^{-r}$ of order%
\[
\rho=\sum\nolimits_{j}(-1)^{j+1}\cdot j\cdot\rank_{\mathbb{Z}{}}\left(
H_{\mathrm{abs}}^{j}(P,r)\right)  .
\]

\item The cohomology groups of the complex $H_{\mathrm{abs}}^{\bullet}(P,r)$
are finite, and the alternating product $\chi^{\times}(P,r)$ of their orders
satisfies%
\[
\left\vert \lim_{t\rightarrow q^{-r}}Z(P,t)(1-q^{r}t)^{\rho}\right\vert
=\chi^{\times}(P,r)\cdot q^{\chi(P,r)}%
\]
where
\[
\chi(P,r)=\sum_{i,j\,\,(\,i\leq r)}(-1)^{i+j}(r-i)\cdot h^{i,j}(r_{p}P).
\]

\end{enumerate}
\end{conjecture}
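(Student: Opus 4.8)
The plan is to obtain Conjecture~\ref{a2} as the specialization of Conjecture~\ref{a1} to $M=\1$, $N=P$, so that the entire content is an unwinding of definitions. First I would use the identity $R\underline{\Hom}(\1,P)\simeq P$ recorded above: this shows that the object written $P=R\underline{\Hom}(M,N)$ in Conjecture~\ref{a1} is, for $(M,N)=(\1,P)$, the given object $P$ itself, so that its zeta function $Z(P,t)$, its $p$-adic realization $r_{p}(P)$, the Hodge numbers $h^{i,j}(r_{p}(P))$, and hence the exponent $\chi(P,r)$, all literally agree between the two statements. Next I would unwind the definition of the absolute cohomology groups,
\[
\Ext^{j}(\1,P(r))=\Hom_{\mathsf{DM}(k)}(\1,P[j](r))=H_{\mathrm{abs}}^{j}(P,r),
\]
which turns item~(a) of Conjecture~\ref{a1} into item~(a) of Conjecture~\ref{a2} and makes the pole order $\rho$ of item~(b) the same expression in both.

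The one step with genuine (if modest) content is to check that the canonical complex $E(\1,P,r)$ built by the construction of \S2 is, term by term and with its differentials, precisely the complex $H_{\mathrm{abs}}^{\bullet}(P,r)$ displayed in the text. For this I would return to that construction and confirm two things: that it is built purely from the groups $\Ext^{j}(M,N(r))$ together with the extra structure supplied by the finiteness of $k$ (the Frobenius-type maps), and that it is natural enough that feeding it the pair $(\1,P)$ introduces no auxiliary hypothesis---in particular that the internal Hom is not secretly used, since in Conjecture~\ref{a1} it enters only through the object $P=R\underline{\Hom}(M,N)$, which collapses to $N=P$ when $M=\1$. Granting this identification, item~(c) of Conjecture~\ref{a2} is exactly item~(c) of Conjecture~\ref{a1} for $(\1,P)$: the left-hand side $\bigl|\lim_{t\to q^{-r}}Z(P,t)(1-q^{r}t)^{\rho}\bigr|$ is unchanged, $\chi^{\times}(P,r)$ is the alternating product of the orders of the cohomology groups of $H_{\mathrm{abs}}^{\bullet}(P,r)$, and $q^{\chi(P,r)}$ is the same correction factor.

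I do not expect a serious obstacle: Conjecture~\ref{a2} is not logically stronger than Conjecture~\ref{a1}, but its restriction to objects of the shape $(\1,P)$, and the reduction is forced by $R\underline{\Hom}(\1,-)\simeq\mathrm{id}$ together with the definitions of $H_{\mathrm{abs}}^{\bullet}$ and of $E(-,-,-)$. The main step, in the sense of the only one requiring care rather than routine unwinding, is the compatibility of the \S2 construction of $E$ with this specialization; the remaining points are bookkeeping---confirming that a tensor structure with unit $\1$ is indeed in force, and that the Tate twists and the realizations $r_{\ell},r_{p}$ respect it. Conversely, by the adjunction $\Hom(\1,R\underline{\Hom}(M,N)[j](r))=\Hom(M,N[j](r))$ one recovers Conjecture~\ref{a1} from Conjecture~\ref{a2} applied to $P=R\underline{\Hom}(M,N)$, so the two are equivalent once internal Homs are available.
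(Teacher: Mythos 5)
Your proposal is correct and matches the paper's own treatment: Conjecture \ref{a2} is obtained from Conjecture \ref{a1} by taking $(M,N)=(\1,P)$, using $R\underline{\Hom}(\1,P)\simeq P$ and the definition $H_{\mathrm{abs}}^{j}(P,r)=\Hom_{\mathsf{DM}(k)}(\1,P[j](r))$ to identify $E(\1,P,r)$ with $H_{\mathrm{abs}}^{\bullet}(P,r)$. Your closing remark on the converse implication via $R\Hom(M,N)\simeq R\Hom(\1,R\underline{\Hom}(M,N))$ is exactly the paper's observation in (\ref{a8b}).
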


\subsection{Examples}

In these examples, we take $\mathsf{DM}(k)$ to be the category defined in \S 7.

\begin{plain}
\label{a3}Let $X$ be a smooth projective surface over $k=\mathbb{F}{}_{q}$.
The terms of the complex $E(\1,hX,1)$ are finitely generated $\mathbb{Z}%
$-modules, and all are finite except for $E^{2}$ and $E^{3}$, which both have
rank $\rho=\rank(\mathrm{NS}(X))$. Therefore (a) of (\ref{a1}) is true, and
(b) is the Tate conjecture for $X$. Modulo torsion, the map $E^{2}%
\overset{d}{\longrightarrow}E^{3}$ can be identified with the map
$\mathrm{NS}(X)\rightarrow\Hom(\mathrm{NS}(X),\mathbb{Z})$ defined by the
intersection pairing, whose cokernel has order $D$. In this case, (c) of
(\ref{a1}) essentially becomes the Artin-Tate conjecture.
\end{plain}

\begin{plain}
\label{a4}Let $M=h_{1}A$, $N=h_{1}B$, and $P=R\underline{\Hom}(h_{1}A,h_{1}B)$
where $A$ and $B$ are abelian varieties over $k=\mathbb{F}{}_{q}$. Then
$Z(P,t)=\prod_{i,j}(1-\frac{a_{i}}{b_{j}}t)$, and Conjecture \ref{a1}
essentially becomes the statement for $A$ and $B$ discussed
above.\footnote{This should be taken with a grain of salt: the $\Ext$'s are in
different categories.}
\end{plain}

\begin{plain}
\label{a4a}Let $X$ be a smooth projective variety over a finite field
$k=\mathbb{F}{}_{q}$, and let $\mathbb{Z}{}(r)$ be the complex of \'{e}tale
sheaves on $X$ given by Bloch's higher Chow groups (see the survey article
\cite{geisser2005}). Define the \emph{Weil-\'{e}tale motivic cohomology
groups} of $X$ to be%
\[
H_{\mathrm{mot}}^{j}(X,r)=H^{j}(X_{\text{W\'{e}t}},\mathbb{Z}{}(r))
\]
where W\'{e}t denotes the Weil-\'{e}tale topology (\cite{lichtenbaum2005}).
Geisser and Levine (2000, 2001)\nocite{geisserL2000}\nocite{geisserL2001} have
shown that $\mathbb{Z}{}(r)$ satisfies the \textquotedblleft Kummer
sequence\textquotedblright\ axiom (\cite{lichtenbaum1984}, (3), p.130;
\cite{milne1988}, (A2)$_{p}$, p.68). Assume that $X$ satisfies the Tate
conjecture and that, for some prime $l$, the ideal of $l$-homologically
trivial correspondences in $\mathrm{CH}^{\dim X}(X\times X)$ is nil. It then
follows from \cite{milne1986v} (and addendum) that Conjecture \ref{a2} holds
with the groups $H_{\mathrm{abs}}^{j}(X,r)$ replaced by $H_{\mathrm{mot}}%
^{j}(X,r)$.

The proof of this has four steps ($X$ is as above).

\begin{itemize}
\item From the Kummer sequence axiom, we get exact sequences%
\[
0\rightarrow H_{\mathrm{mot}}^{j}(X,r)_{l}\rightarrow H^{j}(X,\mathbb{Z}{}%
_{l}(r))\rightarrow T_{l}H_{\mathrm{mot}}^{j+1}(X,r)\rightarrow0.
\]
Here $M_{l}=\varprojlim_{n}M^{(l^{n})}$ is the $l$-adic completion of $M$, and
$T_{l}M=\varprojlim_{n}\Ker(M\overset{l^{n}}{\longrightarrow}M)$.

\item A theorem of Gabber shows that $H^{j}(X,\mathbb{Z}{}_{l}(r))$ is
torsion-free for almost all $l$ (i.e., for all but possibly finitely many).

\item Tate's conjecture for $X$ implies that the map%
\[
H_{\mathrm{mot}}^{j}(X,r)^{\prime}\otimes\mathbb{Z}{}_{l}\rightarrow
H^{j}(X,\mathbb{Z}{}_{l}(r))
\]
is an isomorphism. Here $M^{\prime}$ is the quotient of $M$ by its largest
uniquely divisible subgroup. Together with Gabber's theorem, and the local
results in \cite{milne1986v}, this implies Conjecture \ref{a2} for the groups
$H_{\mathrm{mot}}^{j}(X,r)^{\prime}$.

\item If the ideal of $l$-homologically trivial correspondences in
$\mathrm{CH}^{\dim X}(X\times X)$ is nil, then the groups $H_{\mathrm{mot}%
}^{j}(X,r)$ are finitely generated, and so Conjecture \ref{a2} holds for the
groups $H_{\mathrm{mot}}^{j}(X,r)$.
\end{itemize}
\end{plain}

\subsection{Notes}

\begin{plain}
\label{a6}Statement (b) of Conjecture \ref{a1} should be regarded as the Tate
conjecture for motivic complexes. In particular, Conjecture \ref{a1}
presupposes the usual Tate conjecture \textit{if} the category $\mathsf{DM}%
(k)$ is defined using algebraic classes, but there is no need to do this. We
envisage that Deligne's theory of absolute Hodge classes in characteristic
zero can be extended to a theory of rational Tate classes in characteristic
$p$ for which the Tate conjecture is automatically true (see \cite{milne2009}%
). Thus, we expect Conjecture \ref{a1} to be true (suitably interpreted) even
if the Tate conjecture proves false (see \S 8).
\end{plain}

\begin{plain}
\label{a7}There appears to be no direct relation between Conjecture \ref{a1}
and the Bloch-Kato conjecture. The latter applies to a motive over a global
field (see, for example, \cite{fontaine1992}), whereas our conjecture applies
to a complex of motives over a finite field.

Consider, for example, a smooth projective surface $X$ over a finite field
$k$. As noted above, our conjecture for the motive of $X$ essentially becomes
the conjecture of Artin and Tate for the surface $X$. Now let $f\colon
X\rightarrow C$ be a morphism from $X$ onto a curve $C$, and let $J$ be the
Jacobian of the generic fibre of $f$ (so $J$ is an abelian variety over the
function field $k(C)$ of $C$). Assume that $f$ has connected geometric fibres
and smooth generic fibre. The conjecture of Bloch and Kato for the motive
$h_{1}(J)$ over $k(C)$ is essentially the conjecture of Birch and
Swinnerton-Dyer for $J$ over $k(C)$. It is known that the Artin-Tate
conjecture for $X/k$ is equivalent to the Birch/Swinnerton-Dyer conjecture for
$J/k(C)$. Under some hypotheses on the map $f$, this was proved directly
(\cite{gordon1979}), but the general proof is difficult and indirect: it
proceeds by showing that each conjecture is equivalent to the Tate conjecture
for $X$ (\cite{milne1975}, \cite{katoT2003}). In other words, passing between
the two conjectures in this case is no easier than deducing the conjectures
separately from the Tate conjecture. We expect that a similar statement is
true for the Bloch-Kato conjecture and our conjecture in general.
\end{plain}

\begin{plain}
\label{a8}For some background and history to these questions, see
\cite{milne2013}.
\end{plain}

\begin{plain}
\label{a8a}Let $P$ be the motivic complex $\1[-j_{0}]$ over $\mathbb{F}{}_{q}%
$. Then%
\[
H_{\mathrm{abs}}^{j}(\bar{P},r)\overset{\textup{{\tiny def}}}{=}%
\Hom_{\mathsf{DM}(\mathbb{F}{})}(\1,\1(r)[j-j_{0}])
\]
is zero except that $H_{\mathrm{abs}}^{j_{0}}(\bar{P},r)=\mathbb{Z}{}$. From
the spectral sequence%
\[
H^{i}(\Gamma_{0},H_{\mathrm{abs}}^{j}(\bar{P},r))\implies H_{\mathrm{abs}%
}^{i+j}(P,r)
\]
we find that%
\[
H_{\mathrm{abs}}^{j_{0}}(P,r)\simeq\mathbb{Z}{}\simeq H^{j_{0}+1}(P,r)
\]
and the remaining groups are zero. As $Z(P(r),t)=(1-q^{r}t)^{(-1)^{j_{0}+1}}$,
we see that Conjecture \ref{a2} is true for $P$.

For a general $P\in\ob\mathsf{DM}(\mathbb{F}{}_{q})$, the Tate and other
conjectures predict that there should be a distinguished triangle%
\[
P_{0}\rightarrow P\rightarrow P_{1}\rightarrow P_{0}[1]
\]
with $P_{0}$ a direct sum of motivic complexes of the form $\1[-j_{0}]$ and
$P_{1}$ such that $Z(P_{1},t)$ has no pole or zero at $t=q^{-r}$ and
$H_{\mathrm{abs}}^{j}(P_{1},r)$ is finite for all $j$. From this, (a) and (b)
of Conjecture \ref{a2} follow.
\end{plain}

\begin{plain}
\label{a8b}Conjecture \ref{a2} is a special case of Conjecture \ref{a1}, and
so Conjecture \ref{a1} implies Conjecture \ref{a2}. On the other hand, for
$M,N$ in $\mathsf{DM}(k)$, we should have%
\[
R\Hom(M,N)\simeq R\Hom(\1,R\underline{\Hom}(M,N)),
\]
and so Conjecture \ref{a2} for $P=R\underline{\Hom}(M,N)$ implies Conjecture
\ref{a1} for $M,N$. Nevertheless, it is convenient to have both forms of the conjecture.
\end{plain}

\subsection{Evidence for the conjectures}

\subsubsection{The local versions are true}

The statement of Conjecture \ref{a1} uses few properties of the triangulated
category $\mathsf{\mathsf{DM}}(k)$. In fact, the same arguments lead to
similar conjectures for $\mathsf{D}_{c}^{b}(k,\mathbb{Z}{}_{\ell})$ and
$\mathsf{D}_{c}^{b}(R)$. Assume that the Frobenius maps are semisimple. Then
the conjecture for $\mathsf{D}_{c}^{b}(\mathbb{Z}{}_{\ell})$ is easy to prove
(see \S 4 below). That for $\mathsf{D}_{c}^{b}(R)$ is less easy, but is proved
in \cite{milneR2013} (see \S 5 below).

The realization functors $r_{\ell}$ and $r_{p}$ define maps%
\begin{align*}
\Hom_{\mathsf{DM}(k)}(M,N)  &  \rightarrow\Hom_{\mathsf{D}_{c}^{b}%
(k,\mathbb{Z}{}_{\ell})}(r_{\ell}M,r_{\ell}N)\\
\Hom_{\mathsf{DM}(k)}(M,N)  &  \rightarrow\Hom_{\mathsf{D}_{c}^{b}(R)}%
(r_{p}M,r_{p}N)
\end{align*}
($M,N\in\ob(\mathsf{\mathsf{DM}}(k))$), and hence maps%
\begin{align*}
r_{\ell}(M,N)\colon\Hom_{\mathsf{DM}(k)}(M,N)\otimes\mathbb{Z}{}_{\ell}  &
\rightarrow\Hom_{\mathsf{D}_{c}^{b}(k,\mathbb{Z}{}_{\ell})}(r_{\ell}M,r_{\ell
}N)\\
r_{p}(M,N)\colon\Hom_{\mathsf{DM}(k)}(M,N)\otimes\mathbb{Z}{}_{p}  &
\rightarrow\Hom_{\mathsf{D}_{c}^{b}(R)}(r_{p}M,r_{p}N).
\end{align*}
When $k$ is finite, we expect the maps $r_{l}(M,N)$ to be isomorphisms (see
below). If so, then our local results imply a statement that is only a little
weaker than Conjecture \ref{a1} (see \ref{d5}).

\subsubsection{Voevodsky motivic complexes}

From Example \ref{a4a}, it is clear that, in order for Conjecture \ref{a2} to
be true, we must have%
\[
\Hom_{\mathsf{DM}(k)}(\1,M(X)(r)[j])\simeq H_{\mathrm{mot}}^{j}(X,r)
\]
for all smooth projective varieties $X$, where $M(X)$ is the motivic complex
of $X$ and $H_{\mathrm{mot}}^{j}(X,r)$ is the \textit{Weil-\'{e}tale} motivic
cohomology group. This excludes Voevodsky's category
$\mathsf{\mathsf{\mathsf{DM}_{\mathrm{gm}}}}(k)^{\mathrm{opp}}$, which gives
the \textit{Zariski} motivic cohomology groups.

Instead, we need to look at $\left(  \mathsf{DM_{\mathrm{et}}}(k)\right)
^{\mathrm{opp}}$. This is known to give the \textit{\'{e}tale} motivic
cohomology groups \textit{except for the }$p$\textit{-part}; in fact,
$\mathsf{DM_{\mathrm{et}}}(k)$ is a $\mathbb{Z}{}[p^{-1}]$-linear category. In
order to obtain a category that is also correct for $p$-torsion, we define an
exact functor $\left(  \mathsf{DM}_{\mathrm{et}}(k)\right)  _{\mathbb{Q}{}%
}^{\mathrm{opp}}\rightarrow\mathsf{D}_{c}^{b}(R)_{\mathbb{Q}{}}$ and form the
\textquotedblleft fibred product\textquotedblright\ category%
\[
\begin{tikzcd}
\mathsf{DM}(k)\arrow{d}\arrow{r}&{\left(\mathsf{DM}_{\mathrm{et}}(k)\right)^\mathrm{opp}}\arrow{d}\\
\mathsf{D}_{c}^{b}(R)\arrow{r}&\mathsf{D}_{c}^{b}(R)_{\mathbb{Q}}.%
\end{tikzcd}
\]
This category gives the full \textit{\'{e}tale} motivic cohomology groups,
which is correct over $\mathbb{F}{}$, but over $\mathbb{F}{}_{q}$ we want the
\textit{Weil-\'{e}tale} motivic cohomology groups. There are two more-or-less
equivalent ways of achieving this: repeat the above construction with
Lichtenbaum's Weil-\'{e}tale topology, or define $\mathsf{DM}_{\mathrm{W\acute
{e}t}}(\mathbb{F}{}_{q})$ in terms of $\mathsf{DM_{\mathrm{et}}}(\mathbb{F}%
{})$. See \S 7.

Once $\mathsf{DM}(k)$ has been defined, we can proceed as in Example \ref{a4a}
(we use the same notations).

\begin{itemize}
\item From the rigidity theorem of Suslin and Voevodsky (\cite{voevodsky2000},
3.3.3) and its $p$ counterpart, we obtain exact sequences%
\[
0\rightarrow\Ext^{j}(M,N(r))_{l}\rightarrow\Ext^{j}(r_{l}M,r_{l}%
N(r))\rightarrow T_{l}\Ext^{j}(M,N(r))\rightarrow0
\]
for all $l$.

\item Assuming that the category $\mathsf{DM}(k)_{\mathbb{Q}{}}$ is
semisimple, we show that, for almost all $l$, the group $\Ext^{j}(r_{l}%
M,r_{l}N(r))$ is torsion-free.

\item Tate's conjecture for smooth projective varieties implies that the map%
\[
\Ext^{j}(M,N)^{\prime}\otimes\mathbb{Z}{}_{l}\rightarrow\Ext^{j}(r_{l}%
M,r_{l}N)
\]
is an isomorphism. Together with the preceding statement and our local results
(\S 4,\S 5), this implies Conjecture \ref{a2} for the groups $\Ext^{j}%
(M,N)^{\prime}$.

\item The groups $\Ext^{j}(M,N)$ contain no nontrivial uniquely divisible
subgroups if and only if some realization functor $r_{l}$ is faithful (in
which case, they all are). This is true, for example, if the Beilinson
conjecture\footnote{The Beilinson conjecture says that, for smooth projective
varieties over a finite field, rational equivalence coincides with numerical
equivalence with $\mathbb{Q}{}$-coefficients.} holds for smooth projective varieties.
\end{itemize}

Of course, a theorem that assumes the Tate conjecture for \textit{all} smooth
projective varieties over $\mathbb{F}{}_{q}$ is not of much value, since we
may never be able to prove this. Fortunately, our results are more precise
(see \S \S 6,7,8).

\subsection{Motivic complexes for rational Tate classes.}

If the category $\mathsf{DM}(k)$ is defined using algebraic cycles, then
Conjecture \ref{a1} requires the Tate conjecture. However, we expect that the
theory of rational Tate classes will provide a (unique) extension of Deligne's
theory of absolute Hodge cycles to mixed characteristic. Assuming the
rationality conjecture (8.1), which is weaker than the Hodge conjecture for CM
abelian varieties, we construct in \S 8 a category $\mathsf{DM}(\mathbb{F}%
{}_{q})$ of motivic complexes for which the conjectures are automatically true.

\subsection{Notations}

\textit{We use }$\ell$\textit{ for a prime number }$\neq p$\textit{, and }%
$l$\textit{ for a prime number, possibly }$p$. The $l$-adic absolute value is
normalized so that%
\[
\left\vert a\right\vert _{l}^{-1}=(\mathbb{Z}{}_{l}\colon a\mathbb{Z}{}%
_{l}),\quad a\in\mathbb{Z}{}_{l}.
\]
For an object $M$ of an abelian category, $M^{(n)}$ denotes the cokernel of
multiplication by $n$ on $M$. For an object $M$ of a triangulated category,
$M^{(n)}$ denotes the cone of $M\overset{n}{\longrightarrow}M$.

\section{The complex $E(M,N,r)$}

\subsection{Differential graded enhancements}

Recall that a \emph{graded category} is an additive category equipped with
gradations%
\[
\Hom(A,B)=\bigoplus\nolimits_{n\in\mathbb{Z}{}}\Hom^{n}(A,B)
\]
on the Hom groups that are compatible with composition of morphisms; in
particular, $\id_{A}\in\Hom^{0}(A,A)$ . Such a category is a
\emph{differential graded (dg) category} if, in addition, it is equipped with
differentials $d\colon\Hom^{n}(A,B)\rightarrow\Hom^{n+1}(A,B)$ such that
$d\circ d=0$ and%
\[
d(g\circ f)=dg\circ f+(-1)^{\deg g}(g\circ df)
\]
whenever $g$ is homogeneous and $g\circ f$ is defined. The \emph{homotopy
category }$\mathrm{Ho}(\mathcal{C)}$ of a dg category $\mathcal{C}{}$ has the
same objects a $\mathcal{C}{}$ but%
\[
\Hom_{\mathrm{Ho}(\mathcal{C}{})}(A,B)=H^{0}(\Hom_{\mathcal{C}{}}^{\bullet
}(A,B))\text{.}%
\]

Let $\mathsf{C}$ be a triangulated category. By a \emph{dg enhancement} of
$\mathsf{C},$ we mean a dg category $\mathcal{C}{}$ such that $\mathrm{Ho}%
(\mathcal{C}{})=\mathsf{C}$ and%
\begin{equation}
\Hom_{\mathcal{C}}^{\bullet}(A,B[m])\simeq\Hom_{\mathcal{\mathcal{C}{}}%
}^{\bullet}(A,B)[m] \label{eq19}%
\end{equation}
for $A,B\in\ob\mathsf{C}$, and%
\[
\Hom_{\mathcal{C}{}}(C,\text{Cone}(f))\simeq\text{Cone}(\Hom_{\mathcal{C}%
}(C,A)\overset{f\circ-}{\longrightarrow}\Hom(C,B))
\]
for all $C\in\ob\mathcal{C}{}$ and morphism $f\colon A\rightarrow B$ in
$\mathcal{C}{}$ such that $d\circ f=0$. More generally, we allow a dg
enhancement of $\mathsf{C}$ to be a pretriangulated dg category $\mathcal{C}%
{}$ together with an equivalence to $\mathsf{C}$ from the triangulated
category associated with $\mathcal{C}{}$.

When $\mathsf{C}$ has a fixed dg enhancement of $\mathcal{C}{}$, we write%
\begin{equation}
R\Hom_{\mathsf{C}}^{\bullet}(A,B)=\Hom_{\mathsf{\mathcal{C}{}}}^{\bullet
}(A,B). \label{eq18}%
\end{equation}
Thus, $R\Hom_{\mathsf{C}}^{\bullet}(A,B)$ is a complex such that%
\[
H^{n}(R\Hom_{\mathsf{C}}^{\bullet}(A,B))=\Hom_{\mathsf{C}}%
(A,B[n])\overset{\textup{{\tiny def}}}{=}\Ext^{n}(A,B).
\]

\subsection{The cohomology of $\Gamma_{0}$}

Let $\Gamma_{0}$ be the free abelian group generated by a single element
$\gamma$ (thus $\Gamma_{0}\simeq\mathbb{Z}{}$), and let $\mathbb{Z}{}%
\Gamma_{0}$ be its group ring. For a $\Lambda$-module $M$, we let $M_{\ast}$
denote the corresponding co-induced module. Recall that this consists of the
maps from $\Gamma_{0}$ to $M$, and that $\tau\in\Gamma_{0}$ acts on $f\in
M_{\ast}$ according to the rule $(\tau f)(\sigma)=f(\sigma\tau)$. For each
$\Gamma_{0}$-module $M$, there is an exact sequence%
\begin{equation}
0\rightarrow M\longrightarrow M_{\ast}\overset{\alpha_{\gamma}%
}{\longrightarrow}M_{\ast}\rightarrow0, \label{eq2}%
\end{equation}
in which the first map sends $m\in M$ to the map $\sigma\mapsto\sigma m$ and
the second map sends $f\in M_{\ast}$ to $\sigma\mapsto f(\sigma\gamma)-\gamma
f(\sigma)$. Let $F$ denote the functor $M\mapsto M^{\Gamma_{0}}\colon
\Mod(\mathbb{Z}{}\Gamma_{0})\rightarrow\Mod(\mathbb{Z}{})$. The class of
co-induced $\Lambda\Gamma_{0}$-modules is $F$-injective, and so (\ref{eq2})
defines isomorphisms
\[
RF(M)\simeq F(M_{\ast}\overset{\alpha_{\gamma}}{\longrightarrow}M_{\ast
})\simeq(M\overset{1-\gamma}{\longrightarrow}M)
\]
in $\mathsf{D}^{+}(\mathbb{Z}{})$. For the second isomorphism, note that
$(M_{\ast})^{\Gamma_{0}}$ consists of the constant functions $\Gamma
_{0}\rightarrow M$, and that if $f$ is the constant function with value $m$,
then%
\[
(a_{\gamma}f)(\sigma)=f(\sigma\gamma)-\gamma f(\sigma)=m-\gamma m\text{, all
}\sigma\in\Gamma_{0}\text{.}%
\]

For every complex $X$ of $\mathbb{Z}{}\Gamma_{0}$-modules, there is an exact
sequence%
\begin{equation}
0\rightarrow X\rightarrow X_{\ast}\overset{\alpha_{\gamma}}{\longrightarrow
}X_{\ast}\rightarrow0 \label{eq3}%
\end{equation}
of complexes with $X_{\ast}^{j}=(X^{j})_{\ast}$ for all $j$. We deduce from
(\ref{eq3}) isomorphisms%
\[
RF(X)\simeq s(F(X_{\ast}\longrightarrow X_{\ast}))\simeq s(X\overset{1-\gamma
}{\longrightarrow}X)
\]
in $\mathsf{D}^{+}(\mathbb{Z}{})$ where $X\overset{1-\gamma}{\longrightarrow
}X$ is a double complex with $X$ as both its zeroth and first column and $s$
denotes the associated total complex. In other words,%
\begin{equation}
RF(X)[1]\simeq\text{Cone}(1-\gamma\colon X\rightarrow X)\text{.} \label{eq4}%
\end{equation}
From (\ref{eq4}), we get a long exact sequence%
\begin{equation}
\cdots\rightarrow H^{j-1}(X)\overset{1-\gamma}{\longrightarrow}H^{j-1}%
(X)\rightarrow R^{j}F(X)\rightarrow H^{j}(X)\overset{1-\gamma}{\longrightarrow
}H^{j}(X)\rightarrow\cdots. \label{eq5}%
\end{equation}

Note that%
\[
R^{1}F(\mathbb{Z}{})\simeq H^{1}(\Gamma_{0},\mathbb{\mathbb{Z}{}}%
)\simeq\Hom(\Gamma_{0},\mathbb{Z}{})\text{,}%
\]
which has a canonical element $\theta\colon\gamma\mapsto1$. We can regard
$\theta$ as an element of%
\[
\Ext^{1}(\mathbb{Z}{},\mathbb{Z}{}[1])\overset{\textup{{\tiny def}}%
}{=}\Hom_{\mathsf{D}^{+}(\mathbb{Z}{}\Gamma_{0})}(\mathbb{Z}{},\mathbb{Z}%
{}[1]).
\]
Thus, for $X$ in $\mathsf{D}^{+}(\mathbb{Z}{}\Gamma_{0})$, we obtain maps%
\begin{align}
\theta\colon X  &  \rightarrow X[1]\nonumber\\
R\theta\colon RF(X)  &  \rightarrow RF(X)[1]. \label{eq8}%
\end{align}
The second map is described explicitly by the following map of double
complexes:%
\[
\begin{tikzpicture}[baseline=(current bounding box.center), text height=1.5ex, text depth=0.25ex]
\node (a) at (0,0) {$RF(X)$};
\node (b) at (2,0) {};
\node (c) at (4,0) {$X$};
\node (d) at (6,0) {$X$};
\node (e) [below=of a] {$RF(X)[1]$};
\node (f)[below=of b] {$X$};
\node (g)[below=of c] {$X$};
\node (h)[below=of d] {};
\node at (2,-2.05) {$\scriptstyle{-1}$};
\node at (4,-2.05) {$\scriptstyle{0}$};
\node at (6,-2.05) {$\scriptstyle{1}$};
\draw[->,font=\scriptsize,>=angle 90]
(a) edge node[right]{$R\theta$} (e)
(c) edge node[right]{$\gamma$} (g)
(c) edge node[above]{$1-\gamma$} (d)
(f) edge node[above]{$1-\gamma$} (g);
\end{tikzpicture}
\]
For all $j$, the following diagram commutes%
\begin{equation}
\begin{tikzcd} R^{j}F(X)\arrow{d}\arrow{r}{d^{j}}& R^{j+1}F(X)\\ H^{j}(X)\arrow{r}{\id} & H^{j}(X)\arrow{u} \end{tikzcd} \label{eq6}%
\end{equation}
where $d^{j}=H^{j}(R\theta)$ and the vertical maps are those in (\ref{eq5}).
The sequence%
\begin{equation}
\cdots\rightarrow R^{j-1}F(X)\overset{d^{j-1}}{\longrightarrow}R^{j}%
F(X)\overset{d^{j}}{\longrightarrow}R^{j+1}F(X)\rightarrow\cdots\label{eq7}%
\end{equation}
is a complex because $R\theta\circ R\theta=0$.

\subsection{Construction of the complex $E(M,N,r)$}

Let $\bar{k}$ be an algebraic closure of $k$, and let $\Gamma_{0}%
\subset\Gal(\bar{k}/k)$ be the Weil group of $k$. We let $\gamma$ denote the
generator $x\mapsto x^{q}$ of $\Gamma_{0}$.

We require a dg enhancement $\mathcal{DM}(k)$ of $\mathsf{\mathsf{DM}}(k)$. In
particular, this means that there is a functor%
\[
R\Hom\colon\mathsf{DM}(k)^{\mathrm{opp}}\times\mathsf{DM}(k)\rightarrow
\mathsf{D}^{+}(\mathbb{Z}{})
\]
such that%
\[
\Hom_{\mathsf{\mathsf{DM}}(k)}(M,N[j])\simeq H^{j}(R\Hom(M,N))
\]
for $M,N\in\ob\mathsf{\mathsf{DM}}(k)$ and $j\in\mathbb{Z}{}$. We require that
$R\Hom$ be related to the internal Hom by%
\begin{equation}
R\Hom(\1,R\underline{\Hom}(M,N))\simeq R\Hom(M,N),\quad M,N\in
\ob\mathsf{\mathsf{DM}}(k)\text{.} \label{eq17}%
\end{equation}

We require that the functor%
\[
R\Hom(\1,-)\colon\mathsf{\mathsf{DM}}(k)\rightarrow\mathsf{D}^{+}(\mathbb{Z}%
{})
\]
factors through $\mathsf{D}^{+}(\mathbb{Z}{}\Gamma_{0})$; moreover, that this
factorization arises from a factorization of $M\rightsquigarrow
\Hom_{\mathsf{\mathsf{DM}}(k)}(\1,M)$ through $M\rightsquigarrow
\Hom_{\mathsf{\mathsf{DM}}(\bar{k})}(\1,\bar{M})$.\footnote{For the motivic
complex of a smooth projective variety $X$, this amounts to requiring the
existence of a spectral sequence%
\[
H^{i}(\Gamma_{0},H_{\mathrm{abs}}^{j}(\bar{X},r))\implies H_{\mathrm{abs}%
}^{i+j}(X,r)
\]
for each integer $r$.}

From (\ref{eq17}), we see that
\[
R\Hom\colon\mathsf{DM}(k)^{\mathrm{opp}}\times\mathsf{DM}(k)\rightarrow
\mathsf{D}^{+}(\mathbb{Z}{})
\]
factors through $RF\colon\mathsf{D}^{+}(\mathbb{Z}{}\Gamma_{0})\rightarrow
\mathsf{D}^{+}(\mathbb{Z)}$:
\[
\begin{tikzcd}
\mathsf{DM}(k)\arrow{r}\arrow[bend left=20]{rr}{R\Hom(\1,-)}
&\mathsf{D}^{+}(\mathbb{Z}\Gamma_{0})\arrow{r}{RF}
&\mathsf{D}^{+}(\mathbb{Z})\\
\mathsf{DM}(k)^{\mathrm{opp}}\times\mathsf{DM}(k)\arrow{u}{R\underline{\Hom}(-,-)}
\arrow{rru}[swap]{R\Hom(-,-)}
\end{tikzcd}
\]
Therefore, for each pair $M,N$ of objects of $\mathsf{\mathsf{DM}}(k)$, there
is a well-defined object $X$ of $\mathsf{D}(\mathbb{Z}{}\Gamma_{0})$ such that
$RFX=R\Hom(M,N)$, and%
\[
H^{j}(RFX)=H^{j}(R\Hom(M,N))\simeq\Hom_{\mathsf{\mathsf{DM}}(k)}%
(M,N[j])\overset{\textup{{\tiny def}}}{=}\Ext^{j}(M,N).
\]
Now (\ref{eq7}) becomes a complex%

\[
E(M,N,r)\text{:}\quad\cdots\longrightarrow\Ext^{j-1}(M,N(r))\overset{d^{j-1}%
}{\longrightarrow}\Ext^{j}(M,N(r))\overset{d^{j}}{\longrightarrow}%
\Ext^{j+1}(M,N(r))\longrightarrow\cdots\text{.}%
\]

\section{The zeta function of a motivic complex}

Throughout this section, $k$ is a perfect field.

Let $P$ be an object of $\mathsf{D}_{c}^{b}(k,\mathbb{Z}{}_{\ell})$. We can
think of $P$ as a bounded complex of $\mathbb{Z}{}_{\ell}$-modules with
finitely generated cohomology, equipped with a continuous action of $\Gamma$.
For an endomorphism $\alpha$ of $P$, we define $c_{\alpha}(t)$ to be the
alternating product of the characteristic polynomials of $\alpha$ acting on
the $\mathbb{Q}{}_{\ell}$-vector spaces $H^{i}(P)_{\mathbb{Q}{}_{\ell}}$:%
\[
c_{\alpha}(t)=\prod\nolimits_{i}\det(1-\alpha t|H^{i}(P)_{\mathbb{Q}{}_{\ell}%
})^{(-1)^{i+1}}\in\mathbb{Q}{}_{\ell}(t).
\]

Let $P$ be an object of $\mathsf{D}_{c}^{b}(R)$ and let $\alpha$ be an
endomorphism of $P$. We define $c_{\alpha}(t)$ to be the alternating product
of the characteristic polynomials of $\alpha$ acting on the isocrystals
$H^{i}(sP)_{K}$:%
\[
c_{\alpha}(t)=\prod\nolimits_{i}\det(1-\alpha t|H^{i}(sP)_{K})^{(-1)^{i+1}%
}\noindent\in\mathbb{Q}{}_{p}(t)
\]
(cf. \cite{demazure1972}, p.89).

\subsection{Determinants}

Let $\mathsf{T}$ be a triangulated category, and let $\mathsf{T}%
_{\mathrm{\approx}}$ denote the subcategory with the same objects but with
only the isomorphisms as morphisms. A \emph{determinant functor} on
$\mathsf{T}$ is a functor $f\colon\mathsf{T}_{\approx}\rightarrow\mathsf{P}$
from $\mathsf{T}$ to a Picard category satisfying certain natural conditions
(\cite{breuning2011}, 3.1). Every (essentially) small triangulated category
admits a universal determinant functor $f\colon\mathsf{T}_{\approx}%
\rightarrow\mathsf{P}$, which is unique up to a non-unique isomorphism (ibid.
\S 4). The automorphism group of an object $X$ of $\mathsf{P}$ is independent
of $X$ up to a well-defined isomorphism --- we denote it by $\pi
_{1}(\mathsf{P})$. The \emph{determinant} $\det(\alpha)$ of an automorphism
$\alpha$ of an object $P$ of $\mathsf{T}$ is defined to be the element
$f(\alpha)$ of $\Aut_{\mathsf{P}}(f(P))\simeq\pi_{1}(\mathsf{P}).$

Let $f\colon\mathsf{\mathsf{DM}}(k)_{\mathbb{Q}{}}\rightarrow\mathsf{P}$ be
the universal determinant functor for the triangulated category
$\mathsf{\mathsf{DM}}(k)_{\mathbb{Q}{}}$. Because $\mathsf{\mathsf{DM}%
}(k)_{\mathbb{Q}{}}$ is $\mathbb{Q}{}$-linear, $\pi_{1}(\mathsf{P}%
)\supset\mathbb{Q}{}^{\times}$.

\begin{proposition}
\label{a9}Let $P\in\ob\mathsf{\mathsf{DM}}(k)_{\mathbb{Q}{}}$, and let
$\alpha$ be an automorphism of $P_{\mathbb{Q}{}}$. If $\det(1-\alpha
n)\in\mathbb{Q}{}^{\times}$ for all $n\in\mathbb{Z}{}$, then there exists a
unique $c_{\alpha}(t)\in\mathbb{Q}{}(t)$ such that $c_{\alpha}(n)=\det
(1-\alpha n)$ for all $n\in\mathbb{Z}{}$. Moreover, $c_{\alpha}(t)=c_{r_{l}%
(\alpha)}(t)$ for all $l$.
\end{proposition}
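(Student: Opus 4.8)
The plan is to transport the problem through the realization functors $r_{\ell}$ ($\ell\neq p$) and $r_{p}$, where the determinant of an automorphism of a bounded complex is, by construction, an alternating product of honest characteristic polynomials on cohomology, and then to descend the resulting rational function from $\mathbb{Q}_{\ell}$ (or $\mathbb{Q}_{p}$) to $\mathbb{Q}$ by interpolation. Observe first that the hypothesis ``$\det(1-\alpha n)\in\mathbb{Q}^{\times}$ for all $n$'' already presupposes that $1-n\alpha$ is an automorphism of $P_{\mathbb{Q}}$ for every $n\in\mathbb{Z}$ (the functor $f$ being defined on isomorphisms only), so every expression below is meaningful.

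\emph{Step 1 (compatibility with the realizations).} After tensoring with $\mathbb{Q}$, the realization functors extend to exact $\mathbb{Q}$-linear functors $r_{\ell}\colon\mathsf{DM}(k)_{\mathbb{Q}}\to\mathsf{D}_{c}^{b}(k,\mathbb{Q}_{\ell})$ and $r_{p}\colon\mathsf{DM}(k)_{\mathbb{Q}}\to\mathsf{D}_{c}^{b}(R)_{\mathbb{Q}}$. By the functoriality of universal determinant functors (\cite{breuning2011}, \S 4), composing $f$ with $r_{l}$ and factoring through the universal determinant functor of the target yields a homomorphism $\pi_{1}(\mathsf{P})\to\pi_{1}(\mathsf{P}_{l})$ sending $\det(\beta)$ to $\det(r_{l}\beta)$ for each automorphism $\beta$. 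Since $r_{l}$ is $\mathbb{Q}$-linear it carries $\lambda\cdot\id_{X}$ to $\lambda\cdot\id_{r_{l}X}$, so this homomorphism restricts on $\mathbb{Q}^{\times}\subset\pi_{1}(\mathsf{P})$ to the canonical inclusion $\mathbb{Q}^{\times}\hookrightarrow\mathbb{Q}_{l}^{\times}\subset\pi_{1}(\mathsf{P}_{l})$. Finally, \S 4 (for $\ell\neq p$) and \S 5 together with \cite{milneR2013} (for $p$) show that the universal determinant functor of the target category reproduces, on an automorphism of the form $1-n\delta$, the value $c_{\delta}(n)$ of the characteristic-polynomial rational function of \S 3. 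Combining these facts with $\beta=1-n\alpha$ and $\delta=r_{l}(\alpha)$ gives
\[
\det(1-\alpha n)=c_{r_{l}(\alpha)}(n)\qquad\text{for every }n\in\mathbb{Z}\text{ and every }l.
\]

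\emph{Step 2 (descent and uniqueness).} Fix some $\ell\neq p$. By Step 1, $c_{r_{\ell}(\alpha)}(t)\in\mathbb{Q}_{\ell}(t)$ takes a value in $\mathbb{Q}$ at every integer and has no integral pole (as $\det(1-\alpha n)\neq0$). Write it in lowest terms as $A(t)/B(t)$ with $A,B\in\mathbb{Q}_{\ell}[t]$, $\deg A,\deg B\leq d$, and impose $A(s_{j})=c_{r_{\ell}(\alpha)}(s_{j})\,B(s_{j})$ at $2d+1$ distinct integers $s_{0},\dots,s_{2d}$. This is a homogeneous linear system in the coefficients of $(A,B)$ with coefficients in $\mathbb{Q}$, whose solution space over $\mathbb{Q}_{\ell}$ is the line $\mathbb{Q}_{\ell}\cdot(A,B)$: two solutions give rational functions agreeing at $2d+1$ points, hence equal. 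As the rank of a $\mathbb{Q}$-matrix is unchanged on extension to $\mathbb{Q}_{\ell}$, the solution space over $\mathbb{Q}$ is already a line, so a nonzero rational solution $(A',B')$ satisfies $A'/B'=A/B$ and $c_{r_{\ell}(\alpha)}(t)\in\mathbb{Q}(t)$. Put $c_{\alpha}(t)=c_{r_{\ell}(\alpha)}(t)$; by Step 1 it satisfies $c_{\alpha}(n)=\det(1-\alpha n)$ for all $n$, and it is the unique such rational function because two elements of $\mathbb{Q}(t)$ agreeing at infinitely many points coincide. The same uniqueness applied inside $\mathbb{Q}_{l}(t)$ gives the ``moreover'': for each $l$ both $c_{\alpha}(t)$ and $c_{r_{l}(\alpha)}(t)$ lie in $\mathbb{Q}_{l}(t)$ and agree at every integer, so they are equal.

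\emph{Main obstacle.} The interpolation in Step 2 is routine; the substance is Step 1, and specifically the claim that the universal determinant functor of $\mathsf{D}_{c}^{b}(k,\mathbb{Q}_{\ell})$ (resp.\ of $\mathsf{D}_{c}^{b}(R)_{\mathbb{Q}}$) agrees with the classical determinant --- that its fundamental group is $\mathbb{Q}_{\ell}^{\times}$ (resp.\ $\mathbb{Q}_{p}^{\times}$), and that the determinant of an automorphism is the alternating product of the determinants on cohomology, normalized as in the definition of $c_{\bullet}$ in \S 3. For $\ell\neq p$ this reduces, via the forgetful functor to finite-dimensional $\mathbb{Q}_{\ell}$-vector spaces, to the identification of the relevant $K_{1}$ with $\mathbb{Q}_{\ell}^{\times}$; for $p$ one works instead with $sQ$ and must check that the characteristic polynomials on the isocrystals over $K$ have coefficients in $\mathbb{Q}_{p}$, as in \S 5. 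Granted these local facts and the functoriality of universal determinant functors, the proposition follows formally.
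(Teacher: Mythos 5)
Your proposal is correct and follows essentially the same route as the paper: a commutative diagram of universal determinant functors identifies $\det(1-\alpha n)$ with $c_{r_l(\alpha)}(n)$ at every integer, and then interpolation through a linear system with rational coefficients descends $c_{r_l(\alpha)}(t)$ from $\mathbb{Q}_l(t)$ to $\mathbb{Q}(t)$, with uniqueness coming from agreement at infinitely many points. Your write-up is in fact slightly more careful than the paper's at the linear-algebra step (noting that the solution space is a line and that rank is preserved under field extension) and in isolating the local input about determinants in $\mathsf{D}_{c}^{b}(k,\mathbb{Q}_{\ell})$ and $\mathsf{D}_{c}^{b}(R)_{\mathbb{Q}}$, which the paper takes as given from \S 3.
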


\begin{proof}
Let $P,P_{1},Q,Q_{1}\in\mathbb{Q}{}[t]$ be such that $P(n)/Q(n)=P_{1}%
(n)/Q_{1}(n)$ for all $n\in\mathbb{Z}{}$. Then $P(t)Q_{1}(t)-P_{1}(t)Q(t)$ has
infinitely many roots, and so is zero. This proves the uniqueness of
$c_{\alpha}(t)$.

Fix an $l$ (possibly $p$). There is a commutative diagram%
\[
\begin{tikzcd}
\mathsf{\mathsf{DM}}(k)\arrow{r}\arrow{d}{\det}&\mathsf{D}_{c}^{b}(\mathbb{Z}_{l})\arrow{d}{\det}\\
\mathsf{P}\arrow[dashed]{r}&\mathsf{P}_{l}%
\end{tikzcd}
\]
in which the vertical arrows are universal. Let $c_{r_{l}(\alpha
)}(t)=P(t)/Q(t)$ with $P(t)$, $Q(t)\in\mathbb{Q}{}_{l}[t]$. Then
$P(n)/Q(n)=\det(1-\alpha n)$ for all $n\in\mathbb{Z}{}$. Let $P(t)=\sum
c_{i}t^{i}$ and $Q(t)=\sum d_{j}t^{j}$. Choose distinct rational numbers
$n_{1},\ldots,n_{r}$ with $r$ at least $\max(\deg(P),\deg(Q))$. Then
$(c_{1},c_{2},\ldots,d_{1},\ldots)$ is the unique solution of the system of
linear equations%
\[
\sum c_{i}n_{s}^{i}=\det(1-\alpha n_{s})\cdot\sum d_{j}n_{s}^{j},\quad
s=1,\ldots,r.
\]
As the coefficients of these linear equations lie in $\mathbb{Q}{}$, their
solution does also.
\end{proof}

\begin{aside}
\label{a10}The definition of the characteristic polynomial of an endomorphism
of a motivic complex in (\ref{a9}) follows that in \cite{milne1994}, 2.1, for
an endomorphism of a motive, which in turn follows that in \cite{weil1948},
IX, for an endomorphism of an abelian variety.
\end{aside}

\begin{aside}
\label{a11}What (conjecturally) is the $\mathsf{P}$ attached to
$\mathsf{\mathsf{DM}}(k)$?
\end{aside}

\subsection{Traces}

Let $P$ be an object in a tensor category. When $P$ has a dual $(P^{\vee
},\ev,\delta)$, we can define the \emph{trace} of an endomorphism $\alpha$ of
$P$ to be the composite of%
\[
\1\overset{\delta}{\longrightarrow}P\otimes P^{\vee}%
\xrightarrow{\mathrm{transpose}}P^{\vee}\otimes P\overset{\id\otimes
\alpha}{\longrightarrow}P^{\vee}\otimes P\overset{\ev}{\longrightarrow}\1.
\]
It is an element of $\End(\1)$.

Now assume that $\mathsf{\mathsf{DM}}(k)_{\mathbb{Q}{}}$ has the structure of
a rigid tensor category and that each of the realization functors $r_{l}$ is a
tensor functor. Define $c_{\alpha}(t)$ to be the power series satisfying
(\ref{eq1}) with $N_{n}=\Tr(\alpha^{n}|P)$. Then $c_{\alpha}(t)$ maps to
$c_{r_{l}(\alpha)}(t)$ under $\mathbb{Q}{}[[t]]\mapsto\mathbb{Q}{}_{l}[[t]]$
for all $l$ (including $l=p$). As\footnote{The condition that a power series
be a rational function is linear; see Bourbaki, Alg\`{e}bre, Chap. IV, \S 4,
Ex. 1.} $\mathbb{Q}{}(t)=\mathbb{Q}{}[[t]]\cap\mathbb{Q}{}_{l}(t)$, this shows
that $c_{\alpha}(t)\in\mathbb{Q}{}(t)$.

\subsection{Zeta functions}

Now assume that $k$ is finite, with $q=p^{a}$ elements, and let $\gamma$ be
the generator $x\mapsto x^{q}$ of $\Gal(\mathbb{F}{}/\mathbb{F}{}_{q})$. We
define the zeta function $Z(P,t)$ of an object $P$ of $\mathsf{D}_{c}%
^{b}(k,\mathbb{Z}{}_{\ell})$ (resp. $\mathsf{D}_{c}^{b}(R)$) to be $c_{\gamma
}(t)$ (resp. $c_{F^{a}}(t)$). Let $P$ be an object of $\mathsf{\mathsf{DM}%
}(k)$. We say that an element $Z(P,t)$ of $\mathbb{Q}{}(t)$ is the \emph{zeta
function} of $P$ if $Z(P,t)=Z(r_{l}(P),t)$ for all prime numbers $l$
(including $l=p$) --- clearly $Z(P,t)$ is unique if it exists.

Now assume that there is a Frobenius endomorphism $\pi$ of $\id_{\mathsf{DM}%
(k)}$ such that $r_{\ell}(\pi_{P})$ acts as $\gamma$ on $r_{\ell}(P)$ and
$r_{p}(\pi_{P})$ acts as $F^{a}$ on $r_{p}(P)$.

\begin{proposition}
\label{a9b}Under each of the following two hypotheses, every object of
$\mathsf{\mathsf{DM}}(k)$ has a zeta function:
\end{proposition}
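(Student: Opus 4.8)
The plan is to attach to each object $P$ of $\mathsf{DM}(k)$ the rational function $c_{\pi_{P}}(t)$ determined by the Frobenius endomorphism $\pi_{P}$ --- the value on $P$ of the Frobenius endomorphism $\pi$ of $\id_{\mathsf{DM}(k)}$ --- and to verify that it is a zeta function for $P$. Recall that $Z(P,t)$, if it exists, is unique, and that $Z(r_{\ell}(P),t)$ and $Z(r_{p}(P),t)$ were defined to be $c_{\gamma}(t)$ and $c_{F^{a}}(t)$; since by hypothesis $r_{\ell}(\pi_{P})$ acts as $\gamma$ on $r_{\ell}(P)$ and $r_{p}(\pi_{P})$ acts as $F^{a}$ on $r_{p}(P)$, it is enough to show that, under either hypothesis, there is an element $c_{\pi_{P}}(t)\in\mathbb{Q}{}(t)$ whose image in $\mathbb{Q}{}_{l}(t)$ equals $c_{r_{l}(\pi_{P})}(t)$ for every $l$ (including $l=p$); for then $Z(P,t):=c_{\pi_{P}}(t)$ works. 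Each hypothesis is designed so that one of the two constructions of $c_{(-)}(t)$ from the preceding two subsections applies to $\alpha=\pi_{P}$.

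Under the tensor hypothesis --- that $\mathsf{DM}(k)_{\mathbb{Q}{}}$ is a rigid tensor category and each $r_{l}$ is a tensor functor --- this is essentially the content of the subsection on traces applied to $\alpha=\pi_{P}$: the power series $c_{\pi_{P}}(t)$ satisfying (\ref{eq1}) with $N_{n}=\Tr(\pi_{P}^{n}\mid P)$ lies in $\mathbb{Q}{}[[t]]$; because the $r_{l}$ are tensor functors they carry traces to traces, so $c_{\pi_{P}}(t)$ maps to $c_{r_{l}(\pi_{P})}(t)$ in $\mathbb{Q}{}_{l}[[t]]$; and since $c_{r_{l}(\pi_{P})}(t)$ is rational, $c_{\pi_{P}}(t)\in\mathbb{Q}{}[[t]]\cap\mathbb{Q}{}_{l}(t)=\mathbb{Q}{}(t)$. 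Finally one needs $c_{r_{l}(\pi_{P})}(t)=Z(r_{l}(P),t)$: in the realization categories this identifies the trace-theoretic $c_{(-)}(t)$ with the characteristic-polynomial definition of the zeta function, which is exactly the Lefschetz-type statement that the categorical trace of an endomorphism of a perfect object of $\mathsf{D}_{c}^{b}(k,\mathbb{Z}{}_{\ell})$ (resp. $\mathsf{D}_{c}^{b}(R)$) is the alternating sum of its traces on the cohomology objects.

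Under the determinant hypothesis --- chosen so that Proposition \ref{a9} applies to $\alpha=\pi_{P}$, e.g. that $\det(1-n\pi_{P})\in\mathbb{Q}{}^{\times}$ for all $n\in\mathbb{Z}{}$ (which holds in particular if $\pi_{1}(\mathsf{P})=\mathbb{Q}{}^{\times}$) --- one first observes that $\pi_{P}$ is an automorphism of $P_{\mathbb{Q}{}}$, its realizations being isomorphisms because Frobenius has nonzero eigenvalues on each cohomology object. Proposition \ref{a9} then yields directly the required $c_{\pi_{P}}(t)\in\mathbb{Q}{}(t)$, characterized by $c_{\pi_{P}}(n)=\det(1-n\pi_{P})$ for all $n$, together with $c_{\pi_{P}}(t)=c_{r_{l}(\pi_{P})}(t)$ for all $l$; this case is therefore essentially formal once Proposition \ref{a9} is in hand.

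I expect the only genuinely nontrivial ingredient to be the trace-additivity used in the tensor case: the additivity of categorical traces over distinguished triangles fails for a general triangulated category, so one cannot argue purely formally but must use that $\mathsf{D}_{c}^{b}(k,\mathbb{Z}{}_{\ell})$ and $\mathsf{D}_{c}^{b}(R)$ are derived categories of suitable abelian categories in which the Lefschetz trace formula for complexes is available. The remainder is bookkeeping --- reconciling the three a priori distinct recipes for $c_{\pi_{P}}(t)$ (via traces, via the universal determinant functor, via characteristic polynomials on cohomology) and recording which hypothesis legitimizes which recipe.
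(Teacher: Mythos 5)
Your proposal is correct and takes essentially the same route as the paper, whose proof consists of the single line ``Immediate consequence of the above discussion'': hypothesis (a) feeds $\alpha=\pi_{P}$ into Proposition \ref{a9}, hypothesis (b) feeds it into the traces subsection, and in either case the resulting $c_{\pi_{P}}(t)\in\mathbb{Q}(t)$ maps to $c_{r_{l}(\pi_{P})}(t)=Z(r_{l}(P),t)$ for all $l$ because $r_{\ell}(\pi_{P})$ acts as $\gamma$ and $r_{p}(\pi_{P})$ as $F^{a}$. The Lefschetz-type compatibility you flag in the tensor case (categorical trace equals alternating sum of traces on cohomology in the realization categories) is indeed the one ingredient the paper absorbs silently into its earlier discussion, so your more explicit accounting is a faithful expansion rather than a different argument.
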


\begin{enumerate}
\item for all $P\in\ob\mathsf{DM}(k)$ and $n\in\mathbb{Z}{}$, $\det(1-\pi
_{P}n)\in\mathbb{Q}{}^{\times}$;

\item the category $\mathsf{\mathsf{DM}}(k)_{\mathbb{Q}{}}$ has a rigid tensor
structure, and each of the realization functors $r_{l}$ is a tensor functor.
\end{enumerate}

\begin{proof}
Immediate consequence of the above discussion.
\end{proof}

\section{The local conjecture at $\ell$}

\begin{plain}
\label{a21}For a scheme $X$ of finite type over a field $k$ and a prime
$\ell\neq\mathrm{char}(k)$, we let $\mathsf{D}_{c}^{b}(X,\mathbb{Z}{}_{\ell})$
denote the triangulated category of bounded constructible $\mathbb{Z}{}_{\ell
}$-complexes on $X$ (\cite{ekedahl1990}). This can be constructed as follows.
Let $\Lambda=\mathbb{Z}{}_{\ell}$ and let $\Lambda_{n}=\mathbb{Z}{}_{\ell
}/\ell^{n}\mathbb{Z}{}_{\ell}$. The inverse systems%
\[
F=(\cdots\leftarrow F_{n}\leftarrow F_{n+1}\leftarrow\cdots)
\]
in which $F_{n}$ is a sheaf of $\Lambda_{n}$-modules on $X$ form an abelian
category $\mathcal{S}{}(X,\Lambda_{\bullet})$, whose derived category we
denote by $\mathsf{D}(X,\Lambda_{\bullet})$. An inverse system $F=(F_{n}%
)\in\ob\mathcal{S}{}(X,\Lambda_{\bullet})$ is said to be \emph{essentially
zero} if, for each $n$, there exists an $m\geq n$ such that the transition map
$F_{m}\rightarrow F_{n}$ is zero. A complex $K\in\ob\mathsf{D}(X,\Lambda
_{\bullet})$ is \emph{essentially zero} if each inverse system $H^{i}(K)$ is
essentially zero. Now $\mathsf{D}_{c}^{b}(X,\mathbb{Z}{}_{\ell})$ can defined
to be the full subcategory of $\mathsf{D}(X,\Lambda_{\bullet})$ consisting of
the complexes $M=(M_{n})_{n\in\mathbb{N}{}}$ such that the inverse system%
\[
\Lambda_{1}\otimes_{\Lambda_{\bullet}}^{L}M\overset{\textup{{\tiny def}}%
}{=}(\Lambda_{1}\otimes_{\Lambda_{n}}^{L}M_{n})_{n\in\mathbb{N}{}}%
\]
of complexes is isomorphic, modulo essentially zero complexes, to the constant
inverse system defined by an object of $\mathsf{D}_{c}^{b}(X,\Lambda_{1})$
(triangulated category of bounded complexes of $\Lambda_{1}$-sheaves on $X$
with constructible cohomology).
\end{plain}

\begin{plain}
\label{a21a}Let $\mathcal{S}{}(X,\mathbb{Z}{}_{\ell})$ denote the category of
sheaves of $\mathbb{Z}{}_{\ell}$-modules on $X_{\mathrm{et}}$. The obvious
functors%
\[
\mathcal{S}{}(X,\Lambda_{\bullet})\overset{\pi_{\ast}}{\longrightarrow
}\mathcal{S}{}(X,\mathbb{Z}{}_{\ell})\overset{\pi^{\ast}}{\longrightarrow
}\mathcal{S}{}(X,\Lambda_{\bullet})
\]
induce functors on the corresponding derived categories%
\[
\mathsf{D}^{+}(X,\Lambda_{\bullet})\overset{R\pi_{\ast}}{\longrightarrow
}\mathsf{D}^{+}(X,\mathbb{Z}{}_{\ell})_{\text{\textrm{litt}}}\overset{L\pi
^{\ast}}{\longrightarrow}\mathsf{D}^{+}(X,\Lambda_{\bullet})
\]
whose composite we denote by $M\rightsquigarrow\hat{M}$. The essential image
$\mathsf{D}^{+}(X,\Lambda_{\bullet})_{\mathrm{norm}}$ of $L\pi^{\ast}$
consists of the complexes $M$ such that $M\simeq\hat{M}$. Let $\mathsf{D}%
_{c}^{b}(X,\Lambda_{\bullet})$ denote the full subcategory of $\mathsf{D}%
^{+}(X,\Lambda_{\bullet})$ consisting of complexes $M$ such that $M\simeq
\hat{M}$ and $\Lambda_{1}\otimes_{\Lambda_{\bullet}}^{L}M$ lies in
$\mathsf{D}_{c}^{b}(X,\Lambda_{1})$. The canonical functor from $\mathsf{D}%
^{+}(X,\Lambda_{\bullet})_{\mathrm{norm}}$ to Ekedahl's category
$\mathsf{D}^{+}(X,\mathbb{Z}{}_{\ell})$ induces an equivalence of categories%
\[
\mathsf{D}_{c}^{b}(X,\Lambda_{\bullet})\rightarrow\mathsf{D}_{c}%
^{b}(X,\mathbb{Z}{}_{\ell})\text{.}%
\]
See \cite{fargues2009}, 5.15.
\end{plain}

\begin{plain}
\label{a22}Let $k$ be a field, and let $\Gamma$ be its absolute Galois group.
Let $\Mod(\mathbb{Z}{}_{l}\Gamma)$ denote the category of $\mathbb{Z}{}_{l}%
$-modules with a continuous action of $\Gamma$, and let $\mathsf{D}%
(\mathbb{Z}{}_{l}\Gamma)$ denote its derived category. Define $\mathsf{D}%
_{c}^{b}(\mathbb{Z}{}_{l}\Gamma)$ to be the subcategory of $\mathsf{D}%
(\mathbb{Z}{}_{l}\Gamma)$ of bounded complexes with finitely generated
cohomology (as a $\mathbb{Z}{}_{l}$-module). It is a triangulated category
with $t$-structure whose heart is the category of continuous representations
of $\Gamma$ on finitely generated $\mathbb{Z}{}_{l}$-modules. When
$l\neq\mathrm{char}(k)$, the functor sending $(M_{n})_{n{}}\rightsquigarrow
\varprojlim M_{n}(k^{\mathrm{sep}})$ derives to a functor%
\[
\alpha\colon\mathsf{D}_{c}^{b}(k,\mathbb{Z}{}_{l})\rightarrow\mathsf{D}%
_{c}^{b}(\mathbb{Z}{}_{l}\Gamma)\text{.}%
\]

\end{plain}

\begin{plain}
\label{a23}Now let $k$ be a finite field with $q$ elements, and equip
$\Gamma\overset{\textup{{\tiny def}}}{=}\Gal(\bar{k}/k)$ with the topological
generator $\gamma\colon x\mapsto x^{q}$. For $M$ in $\mathsf{D}_{c}%
^{b}(\mathbb{Z}{}_{l}\Gamma)$, we write $\bar{M}$ for $M$ as an object of
$\mathsf{D}_{c}^{b}(\mathbb{Z}{}_{l})$.

Let $M,N\in\mathsf{D}_{c}^{b}(\mathbb{Z}{}_{\ell}\Gamma)$, and let
$P=R\underline{\Hom}(M,N)$. Note that%
\[
P=R\Hom(\bar{M},\bar{N})
\]
regarded as a continuous $\mathbb{Z}{}_{\ell}\Gamma$-module. Let%
\[
f_{j}\colon\Ext^{j}(\bar{M},\bar{N}(r))^{\Gamma}\rightarrow\Ext^{j}(\bar
{M},\bar{N}(r))_{\Gamma}%
\]
be the map induced by the identity map on $\Ext^{j}(\bar{M},\bar{N}(r))$.

Let $[S]$ denote the cardinality of a set $S$. For a homomorphism $f\colon
M\rightarrow N$ of abelian groups, we let $z(f)=[\Ker(f)]/[\Coker(f)]$ when
both cardinalities are finite. On applying Lemma 5.1 of \cite{milneR2013} to
the $\mathbb{Z}{}_{\ell}\Gamma$-module%
\[
H^{j}(P)=\Ext^{j}(\bar{M},\bar{N})\text{,}%
\]
we obtain the following statement:\bquote$z(f_{j})$ is defined if and only if
the minimum polynomial of $\gamma$ on $H^{j}(P(r))_{\mathbb{Q}{}}$ does not
have $q^{r}$ as a multiple root, in which case%
\[
z(f_{j})=\left\vert \prod_{i,\,\,a_{j,i}\neq q^{r}}\left(  1-\frac{a_{j,i}%
}{q^{r}}\right)  \right\vert ,
\]
where $(a_{j,i})_{i}$ is the family of eigenvalues of $\gamma$ acting on
$H^{j}(P)_{\mathbb{Q}{}}$.\equote

\end{plain}

\begin{plain}
\label{a24}By a $\Lambda_{\bullet}\Gamma$-module, we mean an inverse system
\[
M=\left(  M_{0}\leftarrow\cdots\leftarrow M_{m}\leftarrow M_{m+1}%
\leftarrow\cdots\right)
\]
with $M_{m}$ a discrete $\Gamma$-module killed by $l^{m}$. For example,
$\Lambda_{\bullet}$ is the $\Lambda_{\bullet}\Gamma$-module $(\mathbb{Z}%
{}/l^{m}\mathbb{Z}{})_{m}$ with the trivial action of $\Gamma$. Let
\[
F\colon\Mod(\Lambda_{\bullet}\Gamma)\rightarrow\Mod(\mathbb{Z}{}_{l})
\]
denote the functor sending $M=(M_{m})$ to the $\mathbb{Z}{}_{l}$-module
$\varprojlim M_{m}^{\Gamma}$. If $M$ satisfies the Mittag-Leffler condition,
then%
\[
R^{j}F(M)\simeq H_{\mathrm{cts}}^{j}(\Gamma,\varprojlim M_{m})
\]
(cohomology with respect to continuous cocycles).

Let $\mathsf{D}(\Lambda_{\bullet}\Gamma)$ denote the derived category of
complexes of $\Lambda_{\bullet}\Gamma$-modules. Then $F$ derives to a functor
$RF\colon\mathsf{D}^{+}(\Lambda_{\bullet}\Gamma)\rightarrow\mathsf{D}%
^{+}(\mathbb{Z}{}_{l})$. For $X$ in $\mathsf{D}(\Lambda_{\bullet}\Gamma)$,%
\begin{equation}
RF(X)\simeq s(\vec{X}\overset{1-\gamma}{\longrightarrow}\vec{X}%
)=\text{\textrm{Cone}}(1-\gamma)[-1] \label{eq12}%
\end{equation}
where $\vec{X}=(R\varprojlim)(X)$ (see \cite{milneR2013}, \S 5). From
(\ref{eq12}), we get a long exact sequence%
\begin{equation}
\cdots\rightarrow H^{j-1}(\vec{X})\overset{1-\gamma}{\longrightarrow}%
H^{j-1}(\vec{X})\rightarrow R^{j}F(X)\rightarrow H^{j}(\vec{X}%
)\overset{1-\gamma}{\longrightarrow}H^{j}(\vec{X})\rightarrow\cdots
\label{eq13}%
\end{equation}

The canonical generator $\gamma$ of $\Gamma$ defines a canonical element
$\theta_{l}$ in $H_{\mathrm{cts}}^{1}(\Gamma,\mathbb{Z}{}_{l})$, which we can
regard as an element of%
\[
\Ext_{\Lambda_{\bullet}\Gamma}^{1}(\Lambda_{\bullet},\Lambda_{\bullet}%
)\simeq\Hom_{\mathsf{D}(\Lambda_{\bullet}\Gamma)}(\Lambda_{\bullet}%
,\Lambda_{\bullet}[1]).
\]
For each $X$ in $\mathsf{D}^{+}(\Lambda_{\bullet}\Gamma)$, $\theta_{l}$
defines morphisms%
\begin{align*}
\theta_{l}\colon X  &  \rightarrow X[1]\\
R\theta_{l}\colon RF(X)  &  \rightarrow RF(X)[1].
\end{align*}
The second map is described explicitly by the following map of double
complexes:%
\[
\begin{tikzpicture}[baseline=(current bounding box.center), text height=1.5ex, text depth=0.25ex]
\node (a) at (0,0) {$RF(X)$};
\node (b) at (2,0) {};
\node (c) at (4,0) {$\vec{X}$};
\node (d) at (6,0) {$\vec{X}$};
\node (e) [below=of a] {$RF(X)[1]$};
\node (f)[below=of b] {$\vec{X}$};
\node (g)[below=of c] {$\vec{X}$};
\node (h)[below=of d] {};
\node at (2,-2.05) {$\scriptstyle{-1}$};
\node at (4,-2.05) {$\scriptstyle{0}$};
\node at (6,-2.05) {$\scriptstyle{1}$};
\draw[->,font=\scriptsize,>=angle 90]
(a) edge node[right]{$R\theta$} (e)
(c) edge node[right]{$\gamma$} (g)
(c) edge node[above]{$1-\gamma$} (d)
(f) edge node[above]{$1-\gamma$} (g);
\end{tikzpicture}
\]
For all $j$, the following diagram commutes%
\begin{equation}
\begin{tikzcd} R^{j}F(X)\arrow{d}\arrow{r}{d^{j}}& R^{j+1}F(X)\\ H^{j}(\vec{X})\arrow{r}{\id} & H^{j}(\vec{X})\arrow{u} \end{tikzcd} \label{eq14}%
\end{equation}
where $d^{j}=H^{j}(R\theta)$ and the vertical maps are those in (\ref{eq13}).
The sequence%
\begin{equation}
\cdots\rightarrow R^{j-1}F(X)\overset{d^{j-1}}{\longrightarrow}R^{j}%
F(X)\overset{d^{j}}{\longrightarrow}R^{j+1}F(X)\rightarrow\cdots\label{eq15}%
\end{equation}
is a complex because $R\theta\circ R\theta=0$.
\end{plain}

\begin{plain}
\label{a25}Let $M,N\in\mathsf{D}_{c}^{b}(k,\mathbb{Z}{}_{\ell})$. The
bifunctor%
\[
R\Hom\colon\mathsf{D}_{c}^{b}(k,\mathbb{Z}{}_{\ell})^{\mathrm{opp}}%
\times\mathsf{D}_{c}^{b}(\mathbb{Z}{}_{\ell}\Gamma)\rightarrow D^{+}%
(\mathbb{Z}{}_{\ell})
\]
factors canonically through $RF\colon\mathsf{D}^{+}(\mathbb{Z}_{\ell}{}%
\Gamma)\rightarrow\mathsf{D}(\mathbb{Z}{}_{\ell})$:%
\[
\begin{tikzcd}
\mathsf{D}_c^b(k,\mathbb{Z}_{\ell})\arrow{r}\arrow[bend left=20]{rr}{R\Hom(\1,-)}
&\mathsf{D}^{+}(\mathbb{Z}_{\ell}\Gamma)\arrow{r}{RF}
&\mathsf{D}^{+}(\mathbb{Z})\\
\mathsf{D}_c^b(k,\mathbb{Z}_{\ell})^{\mathrm{opp}}\times\mathsf{D}_c^b(k,\mathbb{Z}_{\ell})\arrow{u}{R\underline{\Hom}(-,-)}
\arrow{rru}[swap]{R\Hom(-,-)}
\end{tikzcd}
\]
Hence%
\[
R\Hom(M,N(r))=RF(X)
\]
for a well-defined object $X$ in $\mathsf{D}(\mathbb{Z}{}_{\ell}\Gamma)$. The
sequence (\ref{eq13}) gives us short exact sequences%
\begin{equation}
0\rightarrow\Ext^{j-1}(\bar{M},\bar{N}(r))_{\Gamma}\rightarrow\Ext^{j}%
(M,N(r))\rightarrow\Ext^{j}(\bar{M},\bar{N}(r))^{\Gamma}\rightarrow0
\label{eq16}%
\end{equation}
in which $(-)^{\Gamma}$ and $(-)_{\Gamma}$ denote the kernel and cokernel of
$1-\gamma$. Moreover, (\ref{eq15}) becomes a complex $E(M,N,r)$,%
\[
\quad\cdots\longrightarrow\Ext^{j-1}(M,N(r))\overset{d^{j-1}}{\longrightarrow
}\Ext^{j}(M,N(r))\overset{d^{j}}{\longrightarrow}\Ext^{j+1}%
(M,N(r))\longrightarrow\cdots\text{.}%
\]
This is the unique complex for which the following diagram commutes,%
\begin{equation}
\begin{tikzcd}[column sep=scriptsize] {}&{}&\Ext^j(\bar{M},\bar{N}(r))^{\Gamma}\arrow{r}{f^j} &\Ext^{j}(\bar{M},\bar{N}(r))_{\Gamma}\arrow{d}\\ \cdots\arrow{r}&\Ext^{j-1}(M,N(r))\arrow{d}\arrow{r}{d^{j-1}} &\Ext^{j}(M,N(r))\arrow{r}{d^{j}}\arrow{u} &\Ext^{j+1}(M,N(r))\arrow{r}&\cdots\\ &\Ext^{j-1}(\bar{M},\bar{N}(r))^{\Gamma}\arrow{r}{f^{j-1}} &\Ext^{j-1}(\bar{M},\bar{N}(r))_{\Gamma}\arrow{u} \end{tikzcd} \label{e17}%
\end{equation}
(the vertical maps are those in (\ref{eq14}) and the maps $f^{j}$ are induced
by the identity map).
\end{plain}

\begin{proposition}
\label{a26}Let $M,N\in\mathsf{D}_{c}^{b}(k,\mathbb{Z}{}_{\ell})$, and let
$P=R\underline{\Hom}(M,N)$. Let $r\in\mathbb{Z}{}$, and assume that, for all
$j$, the minimum polynomial of $\gamma$ on $H^{j}(\alpha P)_{\mathbb{Q}%
{}_{\ell}}$ does not have $q^{r}$ as a multiple root.

\begin{enumerate}
\item The groups $\Ext^{j}(M,N(r))$ are finitely generated $\mathbb{Z}{}%
_{\ell}$-modules, and the alternating sum of their ranks is zero.

\item The zeta function $Z(P,t)$ of $P$ has a pole at $t=q^{-r}$ of order%
\[
\rho=\sum(-1)^{j+1}\cdot j\cdot\rank_{\mathbb{Z}{}_{\ell}}\left(
\Ext^{j}(M,N(r))\right)  .
\]

\item The cohomology groups of the complex $\Ext^{\bullet}(M,N(r))$ are
finite, and the alternating product $\chi^{\times}(M,N(r))$ of their orders
satisfies%
\[
\left\vert \lim_{t\rightarrow q^{-r}}Z(P,t)(1-q^{r}t)^{\rho}\right\vert
_{\ell}^{-1}=\chi^{\times}(M,N(r)).
\]

\end{enumerate}
\end{proposition}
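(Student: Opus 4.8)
The plan is to reduce everything to the two ingredients already assembled in the excerpt: the short exact sequences (\ref{eq16}) relating $\Ext^\bullet(M,N(r))$ to the invariants and coinvariants of $\Ext^\bullet(\bar M,\bar N(r))$, and the computation of $z(f_j)$ recalled in \ref{a23}. Fix the object $X$ of $\mathsf{D}(\mathbb{Z}_\ell\Gamma)$ with $RF(X)=R\Hom(M,N(r))$ supplied by \ref{a25}, so that $H^j(X)=\Ext^j(\bar M,\bar N(r))$; write $V_j=H^j(X)\otimes_{\mathbb{Z}_\ell}\mathbb{Q}_\ell$, which is $H^j(\alpha P)_{\mathbb{Q}_\ell}$ with its $\gamma$-action twisted by $(r)$, and let $d_j=\dim_{\mathbb{Q}_\ell}\Ker(1-\gamma\mid V_j)=\dim_{\mathbb{Q}_\ell}\Coker(1-\gamma\mid V_j)$ (these two dimensions agree for an endomorphism of a finite-dimensional space). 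Since $P$ is a bounded complex with finitely generated cohomology, $V_j=0$ for all but finitely many $j$, so all sums and products below are finite and the standing hypothesis is applied for every relevant $j$.

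For (a): as $\bar M,\bar N\in\mathsf{D}_c^b(\mathbb{Z}_\ell)$, each $H^j(X)$ is a finitely generated $\mathbb{Z}_\ell$-module, so (\ref{eq16}) presents $\Ext^j(M,N(r))$ as an extension of the finitely generated module $H^j(X)^\Gamma$ by the finitely generated module $H^{j-1}(X)_\Gamma$, whence it is finitely generated; tensoring (\ref{eq16}) with $\mathbb{Q}_\ell$ gives $\rank_{\mathbb{Z}_\ell}\Ext^j(M,N(r))=d_{j-1}+d_j$, and $\sum_j(-1)^j(d_{j-1}+d_j)=0$ telescopes. For (b): by construction $Z(P,t)=\prod_j\det(1-\gamma t\mid H^j(\alpha P)_{\mathbb{Q}_\ell})^{(-1)^{j+1}}$, and the $j$-th determinant vanishes at $t=q^{-r}$ to the order $m_j$ equal to the (algebraic) multiplicity of $q^r$ as an eigenvalue of $\gamma$ on $H^j(\alpha P)_{\mathbb{Q}_\ell}$. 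The hypothesis that the minimal polynomial of $\gamma$ does not have $q^r$ as a multiple root says exactly that $m_j$ equals the dimension of the $q^r$-eigenspace, i.e.\ $m_j=d_j$; hence $Z(P,t)$ has a pole at $t=q^{-r}$ of order $-\sum_j(-1)^{j+1}d_j=\sum_j(-1)^jd_j$, and the elementary identity $\sum_j(-1)^jd_j=\sum_j(-1)^{j+1}\,j\,(d_{j-1}+d_j)$ identifies this with $\rho$.

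For (c) I would first chase the diagram (\ref{e17}). Writing $b_j\colon\Ext^j(M,N(r))\twoheadrightarrow H^j(X)^\Gamma$ and $a_{j+1}\colon H^j(X)_\Gamma\hookrightarrow\Ext^{j+1}(M,N(r))$ for the maps of (\ref{eq16}), the diagram says $d^j=a_{j+1}\circ f_j\circ b_j$, where $f_j\colon H^j(X)^\Gamma\to H^j(X)_\Gamma$ is the map of \ref{a23}. Injectivity of $a_{j+1}$ and surjectivity of $b_{j-1}$ give $\Ker d^j=b_j^{-1}(\Ker f_j)$ and $\im d^{j-1}=a_j(\im f_{j-1})$, and since $\Ker b_j=\im a_j\supseteq a_j(\im f_{j-1})$ this yields a short exact sequence
$$0\longrightarrow\Coker f_{j-1}\longrightarrow H^j(E(M,N,r))\longrightarrow\Ker f_j\longrightarrow 0 .$$
By \ref{a23} the standing hypothesis is precisely the condition under which each $z(f_j)=[\Ker f_j]/[\Coker f_j]$ is defined, and when it is, $\Ker f_j$ and $\Coker f_j$ are finite; so each $H^j(E(M,N,r))$ is finite. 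Taking orders in the displayed sequence and telescoping gives $\chi^{\times}(M,N(r))=\prod_j[H^j(E(M,N,r))]^{(-1)^j}=\prod_j z(f_j)^{(-1)^j}$. On the analytic side, cancelling the factors $(1-q^rt)^{m_j}$ against $(1-q^rt)^\rho$ shows $\lim_{t\to q^{-r}}Z(P,t)(1-q^rt)^\rho=\prod_j\bigl(\prod_{a_{j,i}\neq q^r}(1-a_{j,i}/q^r)\bigr)^{(-1)^{j+1}}$, with $(a_{j,i})_i$ the eigenvalues of $\gamma$ on $H^j(\alpha P)_{\mathbb{Q}_\ell}$; applying $|\cdot|_\ell^{-1}$ and the value of $z(f_j)$ from \ref{a23} turns the right-hand side into $\prod_j z(f_j)^{(-1)^j}=\chi^{\times}(M,N(r))$.

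The only step that is not pure formalism is the diagram chase producing the short exact sequence $0\to\Coker f_{j-1}\to H^j(E(M,N,r))\to\Ker f_j\to 0$; granted this, the proposition is just \ref{a23} together with the elementary identities among the $d_j$ and $m_j$. The point that will need the most care is keeping the sign conventions straight — what ``alternating sum'' and ``alternating product'' mean, and the direction of the Tate twist — so that the telescopings in (a), (b), (c) and the final comparison all close up; a secondary point is to confirm that $z(f_j)$ being \emph{defined} (not merely the numerical value of the displayed product) is genuinely equivalent to the hypothesis, which is exactly what \ref{a23} (via Lemma~5.1 of \cite{milneR2013}) provides.
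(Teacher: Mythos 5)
Your argument is correct and follows the same route as the paper: reduce everything to the exact sequences (\ref{eq16}), the diagram (\ref{e17}), and the formula for $z(f_j)$ in (\ref{a23}), with the rank and multiplicity identities handled by telescoping. The only difference is that where the paper invokes Lemma 5.2 of \cite{milneR2013} to get $\chi^{\times}(M,N(r))=\prod_j z(f^j)^{(-1)^j}$, you prove that identity directly via the diagram chase yielding $0\to\Coker f_{j-1}\to H^j(E(M,N,r))\to\Ker f_j\to 0$, which is exactly the content of that lemma; your chase is correct.
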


\begin{proof}
(a) Note that $\Ext^{j}(\bar{M},\bar{N}(r))=H^{j}(\alpha P(r))$, which is a
finitely generated $\mathbb{Z}{}_{\ell}$-module. From (\ref{eq16}), we see
that%
\[
\rank(\Ext^{j}(M,N(r)))=\rank(H^{j-1}(\alpha P(r))_{\Gamma})+\rank^{j}%
(H^{j}(\alpha P(r))^{\Gamma}).
\]
The hypothesis on the action of the Frobenius element implies that%
\[
H^{j}(\alpha P(r))^{\Gamma}\otimes\mathbb{Q}{}\simeq H^{j}(\alpha
P(r))_{\Gamma}\otimes\mathbb{Q}{}%
\]
for all $j$, and so%
\begin{align*}
&  \sum\nolimits_{j}(-1)^{j}\rank(\Ext^{j}(M,N(r)))\\
&  =\sum\nolimits_{j}(-1)^{j}\left(  \rank(H^{j-1}(\alpha P(r))^{\Gamma
})+\rank(H^{j}(\alpha P(r))^{\Gamma})\right) \\
&  =0
\end{align*}

(b) Let $\rho_{j}$ be the multiplicity of $q^{r}$ as an inverse root of
$P_{j}$. Then%
\[
\rho_{j}=\rank H^{j}(\alpha P(r))^{\Gamma}=\rank H^{j}(\alpha P(r))_{\Gamma
}\text{,}%
\]
and so%
\begin{align*}
\sum\nolimits_{j}(-1)^{j+1}\cdot j\cdot\rank(\Ext^{j}(M,N(r)))  &
=\sum\nolimits_{j}(-1)^{j+1}\cdot j\cdot(\rho_{j-1}+\rho_{j})\\
&  =\sum\nolimits_{j}(-1)^{j+1}\rho_{j}\\
&  =-\rho\text{.}%
\end{align*}

(c) From Lemma 5.2 of \cite{milneR2013} applied to the diagram (\ref{e17}), we
find that%
\[
\chi^{\times}(M,N(r))=\prod\nolimits_{j}z(f^{j})^{(-1)^{j}}\text{.}%
\]
According to (\ref{a23}),%
\[
z(f^{j})=\left\vert \prod_{i,\,\,a_{j,i}\neq q^{r}}\left(  1-\frac{a_{j,i}%
}{q^{r}}\right)  \right\vert _{\ell}~~
\]
where $(a_{j,i})_{i}$ is the family of eigenvalues of $\gamma$ acting on
$H^{j}(\alpha P(r))_{\mathbb{Q}{}_{\ell}}$. Note that%
\[
\prod_{i\,,\,\,\,a_{j,i}\neq q^{r}}\left(  1-\frac{a_{j,l}}{q^{r}}\right)
=\lim_{t\rightarrow q^{-r}}\frac{P_{j}(t)}{(1-q^{r}t)^{\rho_{j}}}\text{,}%
\]
and so%
\[
\chi^{\times}(M,N(r))=\left\vert \lim_{t\rightarrow q^{-r}}Z(M,N,t)\cdot
(1-q^{r}t)^{\rho}\right\vert _{\ell}^{-1}.
\]

\end{proof}

\section{The local conjecture at $p$}

This section is a brief review of Milne and Ramachandran 2013. The definitions
and results reviewed in (\ref{b1}, \ref{b2}, \ref{b3}) are due to Ekedahl,
Illusie, and Raynaud.

\begin{plain}
\label{b1}Let $k$ be a perfect field, and let $W$ be the ring of Witt vectors
over $k$ equipped with its Frobenius automorphism $\sigma$. We let $K$ denote
the field of fractions of $W$. Recall that the Raynaud ring is the graded
algebra $~R=R^{0}\oplus R^{1}=R^{0}[d]$ where $R^{0}$ is the Dieudonn\'{e}
ring $W_{\sigma}[F,V]$ and $d$ (of degree $1$) satisfies $d^{2}=0$, $FdV=d$,
$ad=da$ ($a\in W$).
\end{plain}

\begin{plain}
\label{b2}The graded $R$-modules and homomorphisms of degree $0$ form an
abelian category, whose derived category is denoted by $\mathsf{D}(R)$. There
is a well-defined \textquotedblleft completion\textquotedblright\ functor
$M\rightsquigarrow\hat{M}\colon\mathsf{D}(R)\rightarrow\mathsf{D}(R)$, and an
object $M$ of $\mathsf{D}(R)$ is said to be complete if $M\simeq\hat{M}$. A
coherent complex of graded $R$-modules is a complete complex $M$ in
$\mathsf{D}(R)$ such that $R_{1}\otimes_{R}^{L}M$ is bounded with
finite-dimensional cohomology. Here $R_{1}=R/\left(  VR+dVR\right)  $. The
coherent complexes form a full triangulated subcategory $\mathsf{D}_{c}%
^{b}(R)$ of $\mathsf{D}(R)$.
\end{plain}

\begin{plain}
\label{b3}A complex of graded $R$-modules is often viewed as a bicomplex
$M=M^{\bullet\bullet}$ of $R^{0}$-modules in which the first index corresponds
to the $R$-gradation. Let $F^{\prime}$ act on $M^{i\bullet}$ as $p^{i}F$
(assuming only nonnegative $i$'s occur). The differentials in the bicomplex
commute with $F^{\prime}$, and so the associated simple complex $sM$ is a
complex of $W_{\sigma}[F^{\prime}]$-modules. When $M$ is coherent,
$H^{n}(sM)_{K}$ is an $F$-isocrystal.
\end{plain}

We now take $k=\mathbb{F}{}_{q}$, $q=p^{a}$.

\begin{plain}
\label{b4}We define the zeta function $Z(M,t)$ of an $M$ in $\mathsf{D}%
_{c}^{b}(R)$ to be the alternating product of the characteristic polynomials
of $F^{a}$ acting on the $F$-isocrystals $H^{n}(sM)_{K}$ (see \S 3).
\end{plain}

\begin{plain}
\label{b5}Let $\Gamma$ denote the Galois group of $\bar{k}/k$ equipped with
its generator $x\mapsto x^{q}$. By a $\Lambda_{\bullet}\Gamma$-module, we mean
an inverse system $M=(M_{m})_{m\in\mathbb{N}{}}$ of discrete $\Gamma$-modules
$M_{m}$ killed by $p^{m}$. For example, $\Lambda_{\bullet}$ denotes the
$\Lambda_{\bullet}\Gamma$-module $(\mathbb{Z}{}/p^{m}\mathbb{Z}{})_{m}$. Let
$F$ denote the functor sending a $\Lambda_{\bullet}\Gamma$-module $M$ to the
$\mathbb{Z}{}_{p}$-module $\varprojlim M_{m}^{\Gamma}$. If $M$ satisfies the
Mittag-Leffler condition, then%
\[
R^{j}F(M)\simeq H_{\mathrm{cts}}^{j}(\Gamma,\varprojlim M_{m})
\]
(cohomology with respect to continuous cocycles). Because $\Gamma$ has a
canonical generator, there is a canonical element $\theta_{p}$ in
$H_{\mathrm{cts}}^{1}(\Gamma,\mathbb{Z}{}_{p})$, which we can regard as an
element of%
\[
\Ext_{\Lambda_{\bullet}\Gamma}^{1}(\Lambda_{\bullet},\Lambda_{\bullet}%
)\simeq\Hom_{\mathsf{D}(\Lambda_{\bullet}\Gamma)}(\Lambda_{\bullet}%
,\Lambda_{\bullet}[1]).
\]
Here $\mathsf{D}(\Lambda_{\bullet}\Gamma)$ is the derived category of the
category of $\Lambda_{\bullet}\Gamma$-modules. For each $X$ in $\mathsf{D}%
^{+}(\Lambda_{\bullet}\Gamma)$, $\theta_{p}$ defines morphisms%
\begin{align*}
\theta_{p}\colon X  &  \rightarrow X[1]\\
R\theta_{p}\colon RF(X)  &  \rightarrow RF(X)[1].
\end{align*}

\end{plain}

\begin{plain}
\label{b6} The functor%
\[
R\Hom\colon\mathsf{D}_{c}^{b}(R)^{\mathrm{opp}}\times\mathsf{D}_{c}%
^{b}(R)\rightarrow D(\mathbb{Z}{}_{p})
\]
factors canonically through%
\[
RF\colon\mathsf{D}^{+}(\Lambda_{\bullet}\Gamma)\rightarrow\mathsf{D}%
(\mathbb{Z}{}_{p})\text{.}%
\]
This means that, for each pair $M,N\in\mathsf{D}_{c}^{b}(R)$, there is a
well-defined $X\in$ $\mathsf{D}^{+}(\Lambda_{\bullet}\Gamma)$ such that
\[
R\Hom(M,N(r))=RF(X)\text{.}%
\]
Now $R\theta_{p}$ is a morphism%
\[
R\Hom(M,N(r))\rightarrow R\Hom(M,N(r))[1]
\]
On setting $\Ext^{j}(M,N(r))=H^{j}(R\Hom(M,N(r)))$, we obtain a complex%
\[
E(M,N,r)\colon\quad\cdots\rightarrow\Ext^{j}(M,N(r))\overset{d^{j}%
}{\longrightarrow}\Ext^{j+1}(M,N(r))\rightarrow\cdots\text{.}%
\]

\end{plain}

\begin{proposition}
\label{b7}Let $M,N\in\mathsf{D}_{c}^{b}(R)$, and let $P=R\underline{\Hom}%
(M,N)$. Let $r\in\mathbb{Z}{}$, and assume that $q^{r}$ is not a multiple root
of the minimum polynomial of $F^{a}$ acting on $H^{j}(sP)_{K}$.

\begin{enumerate}
\item The groups $\Ext^{j}(M,N(r))$ are finitely generated $\mathbb{Z}{}_{p}%
$-modules, and the alternating sum of their ranks is zero.

\item The zeta function $Z(P,t)$ has a pole at $t=q^{-r}$ of order%
\[
\rho=\sum(-1)^{j+1}\cdot j\cdot\rank_{\mathbb{Z}{}_{p}}(\Ext^{j}(M,N(r)).
\]

\item The cohomology groups of the complex $E(M,N,r)$ are finite, and the
alternating product $\chi^{\times}(M,N(r))$ of their orders satisfies
\[
\left\vert \lim_{t\rightarrow q^{-r}}Z(M,N,t)\cdot(1-q^{r}t)^{\rho}\right\vert
_{p}^{-1}=\chi^{\times}(M,N(r))\cdot q^{\chi(M,N,r)}%
\]
where%
\[
\chi(M,N,r)=\sum\nolimits_{i<r}(-1)^{i}(r-i)\left(  \sum\nolimits_{j}%
(-1)^{j}h^{i,j}(P)\right)  .
\]

\end{enumerate}
\end{proposition}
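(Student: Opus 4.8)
The plan is to imitate the proof of Proposition~\ref{a26}, replacing the $\ell$-adic bookkeeping of (\ref{a25}) by the de Rham--Witt machinery of (\ref{b5}, \ref{b6}). By (\ref{b6}) we may write $R\Hom(M,N(r))=RF(X)$ for a well-defined $X\in\mathsf{D}^{+}(\Lambda_{\bullet}\Gamma)$, and the identity (\ref{eq12}) --- valid verbatim at $p$ by (\ref{b5}) --- gives $RF(X)\simeq\operatorname{Cone}(1-\gamma\colon\vec{X}\to\vec{X})[-1]$ with $\vec{X}=(R\varprojlim)(X)$. This yields the long exact sequence (\ref{eq13}), the short exact sequences
\[
0\to H^{j-1}(\vec{X})_{\Gamma}\to\Ext^{j}(M,N(r))\to H^{j}(\vec{X})^{\Gamma}\to 0,
\]
and, via $R\theta_{p}$, a commutative diagram presenting $E(M,N,r)$ exactly as in (\ref{e17}) but with $\vec{X}$ in place of $X$. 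So the formal skeleton of the argument is identical to (\ref{a26}); what must be supplied is the dictionary between $\vec{X}$ and the isocrystals $H^{j}(sP)_{K}$ that enter $Z(P,t)$.

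That dictionary is the substantive input, drawn (via the Ekedahl--Illusie--Raynaud formalism, as in \cite{milneR2013}, \S5) from two facts: (i) each $H^{j}(\vec{X})$ is a finitely generated $\mathbb{Z}_{p}$-module; (ii) after $\otimes\,\mathbb{Q}_{p}$, $H^{j}(\vec{X})$ is the unit-root (slope $0$) part of the isocrystal $H^{j}(s(P(r)))_{K}=H^{j}(sP)_{K}(r)$, so that $\gamma$ acts on $H^{j}(\vec{X})_{\mathbb{Q}_{p}}$ with eigenvalues $c_{j,i}/q^{r}$, where $c_{j,i}$ runs over the eigenvalues of $F^{a}$ of slope $r$ on $H^{j}(sP)_{K}$. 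In particular the hypothesis of the proposition --- that $q^{r}$ is not a multiple root of the minimal polynomial of $F^{a}$ on $H^{j}(sP)_{K}$ --- is equivalent to $\gamma$ being semisimple at the eigenvalue $1$ on $H^{j}(\vec{X})_{\mathbb{Q}_{p}}$; hence $\rank H^{j}(\vec{X})^{\Gamma}=\rank H^{j}(\vec{X})_{\Gamma}$, a common value I call $\rho_{j}$, equal to the multiplicity of $q^{r}$ as an inverse root of $P_{j}(t):=\det(1-F^{a}t\mid H^{j}(sP)_{K})$.

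Granting this, parts (a) and (b) come out word for word as in (\ref{a26}): the short exact sequences give $\rank\Ext^{j}(M,N(r))=\rho_{j-1}+\rho_{j}$, whose alternating sum telescopes to $0$, and $\sum_{j}(-1)^{j+1}j\,\rank\Ext^{j}(M,N(r))$ telescopes to $\sum_{j}(-1)^{j}\rho_{j}$, which is precisely the order of the pole at $t=q^{-r}$ of $Z(P,t)=\prod_{j}P_{j}(t)^{(-1)^{j+1}}$ (the zeta function of (\ref{b4})). For part (c) one applies Lemma~5.2 of \cite{milneR2013} to the diagram above to get $\chi^{\times}(M,N(r))=\prod_{j}z(f^{j})^{(-1)^{j}}$, and Lemma~5.1 of \cite{milneR2013} (exactly as in (\ref{a23}), now with $|\cdot|_{p}$) gives $z(f^{j})=\bigl|\prod_{i,\,c_{j,i}\neq q^{r}}(1-c_{j,i}/q^{r})\bigr|_{p}$, the product running over the slope-$r$ eigenvalues only. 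On the other hand, after cancelling the factors with $c_{j,i}=q^{r}$ against $(1-q^{r}t)^{\rho}$,
\[
\left|\lim_{t\to q^{-r}}Z(P,t)(1-q^{r}t)^{\rho}\right|_{p}
=\prod_{j}\left|\prod_{i,\,c_{j,i}\neq q^{r}}\bigl(1-c_{j,i}/q^{r}\bigr)\right|_{p}^{(-1)^{j+1}},
\]
now with $i$ ranging over \emph{all} eigenvalues of $F^{a}$ on $H^{j}(sP)_{K}$. Splitting by the slope $s$ of $c_{j,i}$: slopes $s>r$ contribute $1$; slopes $s=r$ reassemble exactly into the $z(f^{j})$'s, i.e.\ into $\chi^{\times}(M,N(r))^{-1}$; and slopes $s<r$ contribute $q$ raised to $\sum_{j}(-1)^{j+1}\sum_{s<r}(r-s)\,m^{(j)}_{s}$, since there $|1-c_{j,i}/q^{r}|_{p}=|c_{j,i}/q^{r}|_{p}=q^{\,r-s}$, where $m^{(j)}_{s}$ is the multiplicity of the slope $s$ in $H^{j}(sP)_{K}$. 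Thus (c) is reduced to the de Rham--Witt Euler-characteristic identity
\[
\sum_{j}(-1)^{j}\sum_{s<r}(r-s)\,m^{(j)}_{s}=\sum_{i<r}(-1)^{i}(r-i)\sum_{j}(-1)^{j}h^{i,j}(P),
\]
with $h^{i,j}(P)=\dim_{k}H^{j}(R_{1}\otimes_{R}^{L}P)^{i}$ as in (\ref{b2}, \ref{b3}).

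This last identity is the heart of the matter, and where I expect the real work to lie. The strategy: rationally the slope spectral sequence of the coherent complex $P$ degenerates (Illusie--Raynaud), so the slope-$[i,i+1)$ part of $H^{j}(sP)_{K}$ has the dimension of the $i$-th ``column'', and by the normalization $F'=p^{i}F$ of (\ref{b3}) its slopes lie in $[i,i+1)$; the reindexing $j\rightsquigarrow j-i$ between the slope spectral sequence and the Hodge filtration --- together with the rational equality of Hodge--Witt and Hodge Euler characteristics --- produces the factor $(-1)^{i}$ and matches the alternating sums of the $m^{(j)}_{s}$ to $(-1)^{i}\sum_{j}(-1)^{j}h^{i,j}(P)$; finally the fractional contributions $\sum_{s\in[i,i+1)}(s-i)m^{(j)}_{s}$ cancel in the alternating sum over $j$, by Poincar\'{e} duality on the de Rham--Witt complex (which exchanges slopes $s$ and $1-s$ within a column, up to twist). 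Combining everything gives $\bigl|\lim_{t\to q^{-r}}Z(P,t)(1-q^{r}t)^{\rho}\bigr|_{p}^{-1}=\chi^{\times}(M,N(r))\cdot q^{\chi(M,N,r)}$. The precise statement and proof of this identity --- generalizing the correction-factor computation of \cite{milne1986v} --- are carried out in \cite{milneR2013}, which I would invoke.
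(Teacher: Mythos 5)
The paper offers no proof of Proposition \ref{b7}: Section 5 is explicitly a review of \cite{milneR2013}, and the proposition is simply quoted from that source. Your reconstruction follows exactly the route taken there (and in the parallel Proposition \ref{a26}) --- the short exact sequences from $RF(X)\simeq\mathrm{Cone}(1-\gamma)[-1]$, Lemmas 5.1 and 5.2 of \cite{milneR2013}, and the slope-by-slope evaluation of $\left\vert 1-c_{j,i}/q^{r}\right\vert _{p}$ with the correction exponent $\chi(M,N,r)$ coming from the slopes $<r$ --- and is correct in outline; the one caveat is that the cancellation of the fractional slope contributions rests on Ekedahl's structure theory of coherent $R$-modules (dominoes, $T$-numbers, and the equality of Hodge--Witt and Hodge Euler characteristics) rather than on Poincar\'{e} duality, which is not available for a general coherent complex $P$.
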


\section{How to prove the conjectures}

This section consists of a series of somewhat unrelated subsections.
Throughout, $\mathsf{DM}(k)$ is a triangulated category of motivic complexes
over a perfect field $k$, and $r_{l}$ is an exact realization functor to
$\mathsf{D}_{c}^{b}(k,\mathbb{Z}{}_{l})$ ($l\neq p$) or $\mathsf{D}_{c}%
^{b}(R)$ ($l=p)$. We sometimes write $\mathsf{D}_{c}^{b}(k,\mathbb{Z}{}_{p})$
for $\mathsf{D}_{c}^{b}(R)$.

\subsection{It suffices to construct $\mathsf{DM}(k)$ for $k$ algebraically
closed\label{a}}

Throughout this subsection, all dg categories are pretriangulated. We write
$\mathcal{D}_{c}^{b}(k{},\mathbb{Z}{}_{\ell})$ (resp. $\mathcal{D}_{c}^{b}%
(k{},\mathbb{Z}{}_{p})$) for the natural dg enhancement of $\mathsf{D}_{c}%
^{b}(k,\mathbb{Z}{}_{\ell})$ (resp. $\mathsf{D}_{c}^{b}(R)$). When
$k=\mathbb{F}{}$, each object of $\mathcal{D}_{c}^{b}(k{},\mathbb{Z}{}_{l})$
is equipped with a germ of a Frobenius element (cf. \cite{milne1994}, p.422).
We assume that the same is true of $\mathcal{DM}(\mathbb{F}{})$ and that the
realization functors send germs to germs.

Suppose that we have constructed a dg category $\mathcal{D}{}\mathcal{M}%
{}(\mathbb{F}{})$ and dg realization functors $r_{l}\colon\mathcal{\mathcal{D}%
{}\mathcal{M}{}}(\mathbb{F}{})\rightarrow\mathcal{D}{}_{c}^{b}(\mathbb{F}%
{},\mathbb{Z}{}_{l})$ for each $l$ (including $l=p$). In this subsection we
construct a dg category $\mathcal{D}{}\mathcal{M}{}(\mathbb{F}{}_{q})$ and dg
functors
\begin{align*}
\mathcal{D}{}\mathcal{M}{}(\mathbb{F}{}_{q})  &  \rightarrow
\mathcal{DM(\mathbb{F}{}}),\\
r_{l}\colon\mathcal{\mathcal{D}{}\mathcal{M}{}}(\mathbb{F}{}_{q})  &
\rightarrow\mathcal{D}{}_{c}^{b}(\mathbb{F}_{q}{},\mathbb{Z}{}_{l})
\end{align*}
such that%

\begin{equation}
\begin{tikzcd} \mathcal{DM}(\mathbb{F})\arrow{r}{r_l}& \mathcal{D}_{c}^{b}(\mathbb{F},\mathbb{Z}_{l})\\ \mathcal{DM}(\mathbb{F}_{q})\arrow{r}{r_l}\arrow{u}& \mathcal{D}_{c}^{b}(\mathbb{F}_{q},\mathbb{Z}{}_{l})\arrow{u} \end{tikzcd} \label{eq22}%
\end{equation}
commutes. Moreover, the composite%
\[
\mathcal{DM}(\mathbb{F}{}_{q})\rightarrow\mathcal{\mathcal{D}{}\mathcal{M}{}%
}(\mathbb{F})\xrightarrow{R\Hom(\1,-)}\mathcal{D}{}(\mathbb{Z}{})
\]
factors canonically through $\mathcal{D}{}(\mathbb{Z}{}\Gamma_{0})$:%
\begin{equation}
\begin{tikzcd} \mathcal{DM}(\mathbb{F})\arrow{r}& \mathcal{D}{}(\mathbb{Z}{})\\ \mathcal{DM}(\mathbb{F}_{q})\arrow{r}\arrow{u} & \mathcal{D}(\mathbb{Z}\Gamma_{0})\arrow{u}\text{.}\end{tikzcd} \label{eq23}%
\end{equation}

We define $\mathcal{\mathcal{D}{}\mathcal{M}{}}(\mathbb{F}{}_{q})$ to be the
category whose objects are pairs $(X,\pi_{X})$ consisting of an object $X$ of
$\mathcal{\mathcal{D}{}\mathcal{M}{}}(\mathbb{F}{})$ and a $q$-representative
of the germ of a Frobenius element on $X$. A morphism $(X,\pi_{X}%
)\rightarrow(Y,\pi_{Y})$ is a morphism $X\rightarrow Y$ sending $\pi_{X}$ to
$\pi_{Y}$.

Similarly, let $\mathcal{D}{}_{c}^{b}(\mathbb{F}_{q}{},\mathbb{Z}{}%
_{l})^{\prime}$ be the category whose objects are pairs $(X,\pi_{X})$
consisting of an object $X$ of $\mathcal{D}{}_{c}^{b}(\mathbb{F}{}%
,\mathbb{Z}{}_{l})$ and a $q$-representative of the germ of a Frobenius
element on $X$. The functors%
\begin{align*}
(X,\pi_{X})\rightsquigarrow X  &  \colon\mathcal{DM}(\mathbb{F}{}%
_{q})\rightarrow\mathcal{DM}(\mathbb{F}{})\\
(X,\pi_{X})\rightsquigarrow(r_{l}(X),r_{l}(\pi_{X}))  &  \colon\mathcal{DM}%
(\mathbb{F}{}_{q})\rightarrow\mathcal{D}_{c}^{b}(\mathbb{F}{}_{q},\mathbb{Z}%
{}_{l})^{\prime}%
\end{align*}
clearly make the diagram (\ref{eq22}) commute. The functor
\[
X\rightsquigarrow(\bar{X},\pi_{X})\colon\mathcal{D}{}_{c}^{b}(\mathbb{F}_{q}%
{},\mathbb{Z}{}_{l})\rightarrow\mathcal{D}{}_{c}^{b}(\mathbb{F}_{q}%
{},\mathbb{Z}{}_{l})^{\prime}%
\]
is an equivalence of categories. On choosing a quasi-inverse, we obtain the
functors making (\ref{eq22}) commute.

An object of $\mathsf{D}(\mathbb{Z}{}\Gamma_{0})$ is just an object of
$\mathsf{D}(\mathbb{Z}{})$ together with an action of $\gamma_{0}$, and so the
factorization of $R\Hom(\1,-)\colon\mathsf{DM}(\mathbb{F}_{q}{})\rightarrow
\mathsf{D}(\mathbb{Z}{})$ through $\mathsf{D}(\mathbb{Z}{}\Gamma_{0})$ is obvious.

\begin{proposition}
\label{c9}If $r_{l}\colon\mathcal{\mathcal{D}{}\mathcal{M}{}}(\mathbb{F}%
{})\rightarrow\mathcal{D}{}_{c}^{b}(\mathbb{F}{},\mathbb{Z}{}_{l})$ induces an
equivalence of categories
\[
r_{l}\colon\mathcal{\mathcal{D}{}\mathcal{M}{}}(\mathbb{F},\mathbb{Z}{}%
/l^{n}\mathbb{Z}{}{})\rightarrow\mathcal{D}{}_{c}^{b}(\mathbb{F}{}%
,\mathbb{Z}{}/l^{n}\mathbb{Z}{})
\]
for all $n$, then $r_{l}\colon\mathcal{\mathcal{D}{}\mathcal{M}{}}%
(\mathbb{F}_{q}{})\rightarrow\mathcal{D}{}_{c}^{b}(\mathbb{F}_{q}{}%
,\mathbb{Z}{}_{l})$ induces an equivalence of categories
\[
r_{l}\colon\mathcal{\mathcal{D}{}\mathcal{M}{}}(\mathbb{F}_{q},\mathbb{Z}%
{}/l^{n}\mathbb{Z}{}{})\rightarrow\mathcal{D}{}_{c}^{b}(\mathbb{F}_{q}%
{},\mathbb{Z}{}/l^{n}\mathbb{Z}{})
\]
for all $n$.
\end{proposition}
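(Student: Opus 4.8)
The plan is to notice that over $\mathbb{F}_q$ both categories in the statement arise from their $\mathbb{F}$-counterparts by one and the same ``adjoin a Frobenius representative'' construction, and then to prove in general that this construction takes equivalences to equivalences; the full-faithfulness half is handled by the functor $RF$ of \S2, the essential-surjectivity half by transporting the Frobenius germ along an isomorphism. Concretely, by the construction preceding the proposition, $\mathcal{DM}(\mathbb{F}_q)$ is the category of pairs $(X,\pi_X)$ with $X$ an object of $\mathcal{DM}(\mathbb{F})$ and $\pi_X$ a $q$-representative of the germ of a Frobenius element on $X$, and, after choosing a quasi-inverse to $X\rightsquigarrow(\bar X,\pi_X)$, the functor $r_l\colon\mathcal{DM}(\mathbb{F}_q)\to\mathcal{D}_c^b(\mathbb{F}_q,\mathbb{Z}_l)$ is identified with $(X,\pi_X)\rightsquigarrow(r_lX,r_l\pi_X)$. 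Since reduction mod $l^n$ is performed on Hom-complexes, since each object of $\mathcal{DM}(\mathbb{F},\mathbb{Z}/l^n\mathbb{Z})$ still carries (the reduction of) its Frobenius germ, and since $r_l$ still sends germs to germs, a routine check with the definitions identifies $\mathcal{DM}(\mathbb{F}_q,\mathbb{Z}/l^n\mathbb{Z})$, resp.\ $\mathcal{D}_c^b(\mathbb{F}_q,\mathbb{Z}/l^n\mathbb{Z})$, with the category of such pairs over $\mathcal{DM}(\mathbb{F},\mathbb{Z}/l^n\mathbb{Z})$, resp.\ over $\mathcal{D}_c^b(\mathbb{F},\mathbb{Z}/l^n\mathbb{Z})$, compatibly with $r_l$. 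It therefore suffices to prove: if $G\colon\mathcal{A}\to\mathcal{B}$ is an equivalence of pretriangulated dg categories equipped with Frobenius germs, compatible with germs, then the induced functor $G'\colon\mathcal{A}'\to\mathcal{B}'$ on the associated categories of pairs is an equivalence; one then applies this to $G=r_l\colon\mathcal{DM}(\mathbb{F},\mathbb{Z}/l^n\mathbb{Z})\to\mathcal{D}_c^b(\mathbb{F},\mathbb{Z}/l^n\mathbb{Z})$, an equivalence by hypothesis.

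\textbf{Full faithfulness of $G'$.} For objects $(X,\pi_X),(Y,\pi_Y)$ of $\mathcal{A}'$, the pair $(\pi_X,\pi_Y)$ makes $R\Hom_{\mathcal{A}}(X,Y)$ into an object of $\mathsf{D}(\mathbb{Z}\Gamma_0)$, and, exactly as in \S2 (via the internal Hom and the factorization through $\mathsf{D}(\mathbb{Z}\Gamma_0)$), the Hom-complex of $\mathcal{A}'$ is $R\Hom_{\mathcal{A}'}((X,\pi_X),(Y,\pi_Y))\simeq RF\bigl(R\Hom_{\mathcal{A}}(X,Y)\bigr)$; likewise over $\mathcal{B}'$. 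Since $G$ is fully faithful it induces a quasi-isomorphism $R\Hom_{\mathcal{A}}(X,Y)\to R\Hom_{\mathcal{B}}(GX,GY)$, and this is $\Gamma_0$-equivariant because $G\pi_X$ and $G\pi_Y$ are the Frobenius representatives used on the $\mathcal{B}'$ side (here one uses that $G$ is a dg functor matching germs). Applying $RF$, which preserves quasi-isomorphisms, shows that $G'$ is a quasi-isomorphism on Hom-complexes, hence fully faithful.

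\textbf{Essential surjectivity of $G'$.} Given $(Y,\pi_Y)$ in $\mathcal{B}'$, use essential surjectivity of $G$ to choose $X$ in $\mathcal{A}$ and an isomorphism $\phi\colon GX\to Y$ in $\mathrm{Ho}(\mathcal{B})$. The endomorphism $\pi'=\phi^{-1}\circ\pi_Y\circ\phi$ of $GX$ is again a $q$-representative of the Frobenius germ of $GX$, since an isomorphism transports the germ. By full faithfulness there is a $\pi_X\colon X\to X$ with $G\pi_X=\pi'$, and since $G$ matches the germ of $X$ with that of $GX$ and is fully faithful, $\pi_X$ is a $q$-representative of the germ of $X$; thus $(X,\pi_X)$ lies in $\mathcal{A}'$. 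By construction $\phi$ is compatible with $G\pi_X=\pi'$ and $\pi_Y$, so it is an isomorphism $G'(X,\pi_X)=(GX,G\pi_X)\to(Y,\pi_Y)$ in $\mathrm{Ho}(\mathcal{B}')$. Hence $G'$ is essentially surjective, and with the previous step it is an equivalence, and likewise mod $l^n$.

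\textbf{Where the difficulty lies.} The substantive point is the homotopy-coherence bookkeeping concealed in the phrase ``$q$-representative of the germ'': in the pretriangulated dg setting one must check that $\mathcal{A}'$ is, up to equivalence, insensitive to these choices, and that the ``transport along a homotopy isomorphism'' above genuinely lands in $\mathcal{A}'$ --- this is exactly what forces the use of the hypothesis that $G$ is an equivalence (so that it \emph{reflects}, not merely preserves, the germ structure) and of the fact that $r_l$ stays compatible with Frobenius germs after reduction mod $l^n$. The cleanest way to organize the argument is to regard $(X,\pi_X)$ as a lift of $R\Hom(\1,-)(X)$ along $\mathsf{D}(\mathbb{Z}\Gamma_0)\to\mathsf{D}(\mathbb{Z})$, i.e.\ as a $\Gamma_0$-equivariant object, so that the whole statement becomes the stability of equivalences under passage to $\Gamma_0$-equivariant objects, which is then formal.
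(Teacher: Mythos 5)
The paper gives no proof of this proposition --- its proof reads ``Omitted'' --- so there is nothing to compare against line by line; your argument is the natural one implicit in the construction immediately preceding the statement (the pair categories $(X,\pi_X)$ and the factorization of $R\Hom(\1,-)$ through $\mathsf{D}(\mathbb{Z}\Gamma_0)$), and it is essentially correct: full faithfulness via applying $RF$ to the $\Gamma_0$-equivariant quasi-isomorphism on Hom-complexes, and essential surjectivity by transporting the Frobenius representative along a homotopy isomorphism and lifting it by full faithfulness, with the hypothesis used to see that $r_l$ reflects (not just preserves) the property of representing the germ. The one step you dismiss as ``a routine check'' that actually hides something is the identification of $\mathcal{DM}(\mathbb{F}_q,\mathbb{Z}/l^n\mathbb{Z})$ (defined as the objects of the pair category killed by $l^n$) with the pair category built over $\mathcal{DM}(\mathbb{F},\mathbb{Z}/l^n\mathbb{Z})$: from the exact sequence $0\rightarrow H^{-1}(R\Hom(X,X))_{\Gamma_0}\rightarrow\Hom((X,\pi_X),(X,\pi_X))\rightarrow\Hom(X,X)^{\Gamma_0}\rightarrow0$ one only gets that a pair whose underlying object is killed by $l^n$ is itself killed by $l^{2n}$, so the two descriptions agree only after taking the union over all $n$ (or after the a posteriori observation that an object of the pair category equivalent under $G'$ to something killed by $l^n$ is itself killed by $l^n$). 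Since the proposition is quantified over all $n$, this is a repairable wrinkle rather than a gap, but it should be said rather than waved at; the same goes for fixing, as you do in your final paragraph, whether the commutation of a morphism of pairs with the $\pi$'s is strict or up to specified homotopy, since the displayed exact sequence is only correct in the latter (homotopy-fixed-point) reading.
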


\begin{proof}
Omitted.
\end{proof}

Here $\mathcal{\mathcal{D}{}\mathcal{M}{}}(k,\mathbb{Z}{}/l^{n}\mathbb{Z}{}%
{})$ is the subcategory of $\mathcal{\mathcal{D}{}\mathcal{M}{}}%
(k,\mathbb{Z}_{l}{}{}{})$ of objects killed by $l^{n}$.

\subsection{Some folklore\label{b}}

Throughout this subsection, $k$ is finitely generated over the prime
field.\footnote{So either $k$ is finite or a finitely generated field
extension of $\mathbb{Q}{}$.} We write $P\rightsquigarrow\bar{P}$ for base
change to the algebraic closure $\bar{k}$ of $k$. Let $\Gamma=\Gal(\bar{k}/k)$.

Let $P$ be an object of $\mathsf{DM}(k)$. Set%
\begin{equation}
\renewcommand{\arraystretch}{1.3}\left\{
\begin{array}
[c]{rcl}%
H_{\mathrm{abs}}^{j}(P,r) & = & \Hom_{\mathsf{DM}(k)}(\1,P[j](r))\\
H^{j}(P,\mathbb{Z}_{l}(r)) & = & \Hom_{\mathsf{D}(k,\mathbb{Z}{}_{l}%
)}(\1,r_{l}(P)[j](r))\\
H^{j}(P,\left(  \mathbb{Z}{}/l^{n}\mathbb{Z}{}\right)  (r)) & = &
\Hom_{\mathsf{D}(k,\mathbb{Z}{}_{l})}(\1,r_{l}(P^{(l^{n})})[j](r))\text{.}%
\end{array}
\right.  \label{eq27}%
\end{equation}
Recall that, for $l=p$, $\mathsf{D}(k,\mathbb{Z}{}_{l}%
)\overset{\textup{{\tiny def}}}{=}\mathsf{D}_{c}^{b}(R)$ and that, for an
object $P$ of a triangulated category, $P^{(l^{n})}$ is the cone on
$l^{n}\colon P\rightarrow P$.

Let $A^{j}(P,r)$ denote the image of the canonical map
\[
H_{\mathrm{abs}}^{j}(P,r)_{\mathbb{Q}{}}\rightarrow V_{l}^{j}%
(P,r)\overset{\textup{{\tiny def}}}{=}\mathbb{Q}_{l}\otimes_{\mathbb{Z}{}_{l}%
}{}H^{j}(\bar{P},\mathbb{Z}_{l}(r)).
\]
Let $P^{\prime}$ be a second object of $\mathsf{DM}(k)$ equipped with a
pairing%
\[
P\otimes^{L}P^{\prime}\rightarrow\1(-d)
\]
for some integer $d$. For example, $P^{\prime}=R\underline{\Hom}(P,\1(-d))$.
Assume that the induced pairings%
\[
\langle\,\,,\,\,\rangle\colon V_{l}^{j}(P,r)\times V_{l}^{2d-j}(P^{\prime
},d-r)\rightarrow\mathbb{Q}{}_{l}%
\]
are nondegenerate, and that $V_{l}^{j}(P,r)$ and $V_{l}^{2d-j}(P^{\prime
},d-r)$ are (noncanonically) isomorphic as $\Gamma$-modules. Elements $a\in
A^{j}(P,r)$ and $a^{\prime}\in A^{2d-j}(P^{\prime},d-r)$ pair to a rational
number $\langle a,a^{\prime}\rangle$ independent of $l$. Let%
\[
N^{j}(P,r)=\{a\in A^{j}(P,r)\mid\langle a,a^{\prime}\rangle=0\text{ for all
}a^{\prime}\in A^{2d-j}(P^{\prime},d-r)\}\text{.}%
\]
There is a canonical map
\begin{equation}
\mathbb{Q}{}_{l}\otimes_{\mathbb{Q}{}}A^{j}(P,r)\rightarrow V_{l}%
^{j}(P,r)^{\Gamma}. \label{eq20a}%
\end{equation}
Consider the following statements.

\begin{description}
\item[$T_{l}^{j}(P,r)$:] The map (\ref{eq20a}) is surjective, i.e.,
$\mathbb{Q}{}_{l}A^{j}(P,r)=V_{l}^{j}(P,r)^{\Gamma}$.

\item[$I_{l}^{j}(P,r)$:] The map (\ref{eq20a}) is injective.

\item[$E_{l}^{j}(P,r)$:] The vector space $N^{j}(P,r)=0$.

\item[$S_{l}^{j}(P,r)$:] The map $V_{l}^{j}(P,r)^{\Gamma}\rightarrow
V^{j}(P,r)_{\Gamma}$ induced by the identity map is bijective.
\end{description}

In the next statement, we abbreviate $T_{l}^{j}(P,r)$ to $T$ and $T_{l}%
^{2d-j}(P^{\prime},d-r)$ to $T^{\prime}$ etc..

\begin{theorem}
\label{c10a}The following statements are equivalent:

\begin{enumerate}
\item dim$_{\mathbb{Q}{}}(A^{j}(P,r)/N^{j}(P,r))=\dim_{\mathbb{Q}{}_{l}}%
V_{l}^{j}(P,r)^{\Gamma};$

\item $T+E;$

\item $T+T^{\prime}+S;$

\item $T+T^{\prime}+E+E^{\prime}+I+I^{\prime}+S+S^{\prime}$.
\end{enumerate}

\noindent Moreover, if $k$ is finite, then these statements are equivalent to:

\begin{enumerate}
\item[\textup{(e)}] the multiplicity of $1$ as a root of the characteristic
polynomial of the Frobenius on $V_{l}^{j}(P,r)^{\Gamma}$ is equal to
$\dim_{\mathbb{Q}{}}(A^{j}(P,r)/N^{j}(P,r))$.
\end{enumerate}
\end{theorem}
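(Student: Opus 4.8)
The plan is to treat the statement as a piece of linear algebra over $\mathbb{Q}$ and $\mathbb{Q}_l$ and to prove it through the cycle (a)$\Rightarrow$(b)$\Rightarrow$(c)$\Rightarrow$(d)$\Rightarrow$(a). Abbreviate $V=V_l^j(P,r)$, $V'=V_l^{2d-j}(P',d-r)$, $A=A^j(P,r)\subseteq V^\Gamma$, $A'=A^{2d-j}(P',d-r)\subseteq (V')^\Gamma$, and $N=N^j(P,r)$, $N'=N^{2d-j}(P',d-r)$. Let $\phi\colon\mathbb{Q}_l\otimes_{\mathbb{Q}}A\to V^\Gamma$ be the map (\ref{eq20a}), let $W=\phi(\mathbb{Q}_l\otimes_{\mathbb{Q}}A)$ be its image, and let $\phi',W'$ be the analogues for $A'$; so $T$ reads "$W=V^\Gamma$", $I$ reads "$\phi$ injective", $E$ reads "$N=0$", and $S$ reads "$V^\Gamma\to V_\Gamma$ bijective". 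First I would record the formal consequences of the two standing hypotheses. Nondegeneracy and $\Gamma$-equivariance of $V\times V'\to\mathbb{Q}_l$ give $V'\simeq V^\vee$, so with $V\simeq V'$ one gets $V\simeq V^\vee$; hence $\dim_{\mathbb{Q}_l}V^\Gamma=\dim_{\mathbb{Q}_l}V_\Gamma$ (invariants of the dual $=$ dual of the coinvariants), similarly for $V'$, and $S\Leftrightarrow S'$. The annihilator of $V^\Gamma$ is $\sum_{\sigma}(1-\sigma)V'$, so there are perfect pairings $V^\Gamma\times V'_\Gamma\to\mathbb{Q}_l$ and $(V')^\Gamma\times V_\Gamma\to\mathbb{Q}_l$. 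Finally the $\mathbb{Q}$-rank of $A\times A'\to\mathbb{Q}$ equals $\dim_\mathbb{Q}(A/N)=\dim_\mathbb{Q}(A'/N')$ and is unchanged by $\otimes_\mathbb{Q}\mathbb{Q}_l$, where it becomes the rank of the $\mathbb{Q}_l$-bilinear pairing induced on $W\times W'$; call this common integer $\rho$. Note $\rho\le\dim W\le\dim V^\Gamma$, and symmetrically for the primed objects.

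Now the cycle. (a)$\Rightarrow$(b): $\dim_\mathbb{Q}(A/N)=\dim V^\Gamma$ together with $\rho\le\dim W\le\dim V^\Gamma$ forces $\dim W=\dim V^\Gamma$ ($T$) and $\rho=\dim W$, i.e. the pairing on $W\times W'$ is left-nondegenerate; since $A'$ spans $W'$ over $\mathbb{Q}_l$ this gives $N=0$ ($E$). (b)$\Rightarrow$(c): from $E$, $\rho=\dim_\mathbb{Q}A$; from $T$, $\dim_\mathbb{Q}A\le\dim W=\dim V^\Gamma$ and $\phi$ has kernel of dimension $\dim_\mathbb{Q}A-\dim V^\Gamma\le0$, so $\phi$ is injective ($I$) and $\dim_\mathbb{Q}A=\dim V^\Gamma=\rho$. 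Then the pairing on $V^\Gamma\times W'$ has rank $\dim V^\Gamma$, forcing $\dim W'\ge\dim V^\Gamma=\dim(V')^\Gamma$, hence $W'=(V')^\Gamma$ ($T'$) and the pairing $V^\Gamma\times(V')^\Gamma\to\mathbb{Q}_l$ is perfect; comparing it with the perfect pairing $V^\Gamma\times(V')_\Gamma\to\mathbb{Q}_l$ forces the natural map $(V')^\Gamma\to(V')_\Gamma$ to be an isomorphism, i.e. $S'$, hence $S$. (c)$\Rightarrow$(d): by $T+T'+S$ the pairing on $W\times W'$ is the restriction of the perfect pairing $V^\Gamma\times(V')_\Gamma\to\mathbb{Q}_l$, so $\rho=\dim V^\Gamma$, which is (a); to get $E,I$, choose (using $T'$) a $\mathbb{Q}_l$-basis of $(V')^\Gamma$ inside $A'$ and let $f_1,\dots,f_v\in V^\Gamma$ be its dual basis for the perfect pairing $V^\Gamma\times(V')^\Gamma\to\mathbb{Q}_l$; by rationality every $a\in A$ has rational coordinates with respect to $f_1,\dots,f_v$, so $A\subseteq\sum_i\mathbb{Q}f_i$ and $\dim_\mathbb{Q}A\le v=\dim V^\Gamma$; combined with "$\phi$ has kernel of dimension $\dim_\mathbb{Q}A-\dim V^\Gamma\ge0$" this gives $\dim_\mathbb{Q}A=\dim V^\Gamma$, hence $I$ and $\dim_\mathbb{Q}N=\dim_\mathbb{Q}A-\rho=0$, i.e. $E$; the symmetric argument (via $V\simeq V'$ and $S\Leftrightarrow S'$) yields $E',I'$, so all of (d) holds. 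Finally (d)$\Rightarrow$(a) is immediate, since (d) contains $T+T'+S$ and these give (a) as in the first line of (c)$\Rightarrow$(d).

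For the last clause: when $k$ is finite, $\Gamma$ is topologically generated by the Frobenius $\gamma$, which acts as the identity on $V_l^j(P,r)^\Gamma$; so the multiplicity of $1$ as a root of its characteristic polynomial there is $\dim_{\mathbb{Q}_l}V_l^j(P,r)^\Gamma$, and (e) is literally statement (a). (Should one instead read the Frobenius in (e) as acting on all of $V_l^j(P,r)$, statement (e) becomes the conjunction of (a) with $S_l^j$ — semisimplicity of the generalized $1$-eigenspace — and is again equivalent to (a)–(d) by the above.)

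I expect the genuine obstacle to be (b)$\Rightarrow$(c): producing $T'$ and especially the semisimplicity statement $S$ out of $T+E$, where the only lever is the tension between the \textit{rational} pairing $A\times A'\to\mathbb{Q}$ and the \textit{perfect} $l$-adic pairing — perfectness being precisely what promotes a full-rank pairing on $V^\Gamma\times(V')^\Gamma$ to an isomorphism $(V')^\Gamma\simeq(V')_\Gamma$. The rest is bookkeeping, provided one tracks carefully the five quantities $\dim_\mathbb{Q}A$, $\dim W$, $\rho$, $\dim_{\mathbb{Q}_l}V^\Gamma$, $\dim_\mathbb{Q}N$ (and their primed versions) and which hypothesis controls which inequality.
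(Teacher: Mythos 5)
Your argument is correct, and it is essentially the argument the paper is pointing to: the paper gives no proof of its own, citing Tate (1994), \S 2 and Milne (2009), \S 1, and your cycle (a)$\Rightarrow$(b)$\Rightarrow$(c)$\Rightarrow$(d)$\Rightarrow$(a) --- playing the rational pairing on $A\times A'$ against the perfect $l$-adic pairing $V^{\Gamma}\times (V')_{\Gamma}\to\mathbb{Q}_l$, with the dual-basis rationality trick supplying $I$ and $E$ in (c)$\Rightarrow$(d) --- is exactly Tate's linear algebra. Two small remarks: in (b)$\Rightarrow$(c) the inequality $\dim_{\mathbb{Q}}A\le\dim W$ follows from $E$ (via $\dim_{\mathbb{Q}}A=\rho\le\dim W$), not from $T$ as you attribute it, though the conclusion is unaffected; and you are right that (e) as literally printed (Frobenius acting on $V_l^j(P,r)^{\Gamma}$) collapses to (a) --- it should read $V_l^j(P,r)$, and your treatment of that intended reading (multiplicity of $1$ equals $\dim V^{\Gamma}$ precisely when $\gamma$ is semisimple on the generalized $1$-eigenspace, i.e.\ when $S$ holds) is the correct one.
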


\begin{proof}
See \cite{tate1994}, \S 2, and \cite{milne2009}, \S 1.
\end{proof}

We say that the \emph{Tate conjecture holds} for $P$, $j,r,l$ if the
equivalent conditions (a),$\ldots$,(d) of the theorem hold for $P,j,r,l$. We
say that the \emph{Tate conjecture holds} for $P$ and $l$ if the equivalent
conditions of the theorem hold for all quadruples $P,j,r,l$ (fixed $P,l$).

\subsection{Local torsion (Gabber's theorem)\label{c}}

Throughout this subsection, $k$ is a separably closed field of characteristic
$p\neq0$ (not necessarily perfect).

\begin{conjecture}
\label{c4}Let $M,N\in\ob\mathsf{DM}(k)$, and let $j\in\mathbb{Z}{}$. The group
$\Ext^{j}(r_{l}M,r_{l}N)$ is torsion-free for almost all $l$.
\end{conjecture}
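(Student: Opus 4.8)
The plan is to reduce Conjecture~\ref{c4} to Gabber's theorem on the torsion in $\ell$-adic cohomology (quoted in the discussion of Example~\ref{a4a}), using the conjectural semisimplicity of $\mathsf{DM}(k)_{\mathbb{Q}{}}$ to realise $M$ and $N$, after a realization functor and outside a finite set of primes, as direct summands of finite sums of Tate twists and shifts of motives of smooth projective varieties.

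First I would dispose of the linear algebra. Since $k$ is separably closed, $\mathsf{D}_c^b(k,\mathbb{Z}{}_\ell)$ is the category $\mathsf{D}_c^b(\mathbb{Z}{}_\ell)$ of bounded complexes of $\mathbb{Z}{}_\ell$-modules with finitely generated cohomology, and because $\mathbb{Z}{}_\ell$ has global dimension $1$ every such complex $C$ is isomorphic to $\bigoplus_i H^i(C)[-i]$. Hence
\[
\Ext^j(r_\ell M,r_\ell N)\;\simeq\;\bigoplus_{i}\Big(\Hom_{\mathbb{Z}{}_\ell}\!\big(H^i(r_\ell M),H^{i+j}(r_\ell N)\big)\oplus\Ext^1_{\mathbb{Z}{}_\ell}\!\big(H^{i+1}(r_\ell M),H^{i+j}(r_\ell N)\big)\Big).
\]
If every $H^i(r_\ell M)$ and every $H^i(r_\ell N)$ is torsion-free, the $\Ext^1$-terms vanish (source free) and each $\Hom$-term is a finite power of a torsion-free module, so $\Ext^j(r_\ell M,r_\ell N)$ is torsion-free. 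It therefore suffices to show that, for a single object $P$ of $\mathsf{DM}(k)$, the groups $H^i(r_\ell P)$ are torsion-free for all $i$ once $\ell$ avoids a finite set depending on $P$; applying this to $P=M$ and $P=N$ completes the argument. Since $p$ is a single prime it may be placed in the exceptional set, so I only need $\ell\neq p$, where $r_\ell$ factors through $\mathsf{DM}_{\mathrm{et}}(k)$.

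Next I would split off smooth projective varieties. The category $\mathsf{DM}_{\mathrm{et}}(k)$ is generated by the motives $h(X)$ of smooth projective $k$-varieties together with their Tate twists and shifts. Assuming that $\mathsf{DM}(k)_{\mathbb{Q}{}}$ is semisimple, in the sense that $P_{\mathbb{Q}{}}$ is a finite direct sum $\bigoplus_a S_a[m_a]$ with each $S_a$ a direct summand of some $h(X_a)_{\mathbb{Q}{}}(n_a)$ for $X_a$ smooth projective, the projectors onto the $S_a$ lie in the $\mathbb{Q}{}$-algebras $\End(h(X_a)(n_a))_{\mathbb{Q}{}}$ and so have bounded denominators: there is an integer $d$ with all of them defined over the relevant $\mathbb{Z}{}[1/d]$-subalgebras. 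For $\ell\nmid d$ (and $\ell\neq p$), applying $r_\ell$ and using that $\mathsf{D}_c^b(\mathbb{Z}{}_\ell)$ is idempotent-complete realises $r_\ell P$ as a genuine direct summand --- no cones --- of $\bigoplus_a r_\ell(h(X_a))(n_a)[m_a]$. Thus each $H^i(r_\ell P)$ is a direct summand of a finite sum of groups of the form $H^m_{\mathrm{et}}(X_a,\mathbb{Z}{}_\ell(n_a))$, which by Gabber's theorem are torsion-free for all $\ell$ outside a finite set attached to the finite family $\{X_a\}$; a direct summand of a torsion-free $\mathbb{Z}{}_\ell$-module is torsion-free, so $H^i(r_\ell P)$ is torsion-free for almost all $\ell$, as required. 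The final exceptional set is then the union of those arising from $M$ and from $N$, together with the relevant idempotent denominators and $\{p\}$.

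The hard part is not the $\ell$-adic bookkeeping but the input in the middle step: the semisimplicity of $\mathsf{DM}(k)_{\mathbb{Q}{}}$. Over a finite field this is the Beilinson conjecture (rational equivalence equals numerical equivalence with $\mathbb{Q}{}$-coefficients) and is open; over a general separably closed field of characteristic $p$ one expects it only in the enhanced construction of \S8, where it still rests on the rationality conjecture~(8.1). Without some such input, a cone in $\mathsf{DM}(k)$ can manufacture $\ell$-torsion in its realization for infinitely many $\ell$, and the reduction to Gabber's theorem breaks down --- which is precisely why the statement is recorded here as a conjecture rather than a theorem.
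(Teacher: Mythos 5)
The statement is recorded in the paper as a conjecture, and what the paper offers in its place (\ref{c5}--\ref{c8}) is, like your argument, only a conditional derivation from Gabber's theorem plus the semisimplicity of $\mathsf{DM}(k)_{\mathbb{Q}}$; on that score your proposal is on the same footing as the paper. Your route through the reduction is, however, genuinely different. The paper proceeds by d\'evissage: Gabber for smooth projective varieties (\ref{c5}), then a trace argument via Chow's lemma and de Jong's alterations for proper smooth varieties (\ref{c6}), then a normal-crossings compactification and induction on dimension for arbitrary varieties (\ref{c7}) --- with semisimplicity entering only to show the cokernel of $H^{i}(X)\rightarrow H^{i}(D)$ is torsion-free --- and finally a two-out-of-three argument along distinguished triangles in each variable together with generation of $\mathsf{DM}(k)$ by motives of smooth varieties (\ref{c8}). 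You instead make explicit the linear algebra in $\mathsf{D}_{c}^{b}(\mathbb{Z}_{\ell})$ that reduces torsion in $\Ext^{j}(r_{\ell}M,r_{\ell}N)$ to torsion in the cohomology of $r_{\ell}M$ and $r_{\ell}N$ (a step the paper leaves implicit), and then use semisimplicity in its strongest form to write $P_{\mathbb{Q}}$ as a direct sum of summands of twisted shifts of motives of smooth \emph{projective} varieties, clearing denominators to land directly on Gabber. What your version buys is brevity and the avoidance of the induction on dimension; what it costs is that you must assume generation by smooth projective (not merely smooth) varieties --- which itself rests on Gabber's refinement of de Jong, so the alterations have not really disappeared, only been absorbed into the hypothesis --- and that your argument is conditional from the first step, whereas the paper's intermediate cases (\ref{c5}), (\ref{c6}) are unconditional. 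One small point to tighten: a rational isomorphism $P_{\mathbb{Q}}\simeq\bigoplus_{a}S_{a}[m_{a}]$ gives integral maps $u,v$ with $v\circ u$ equal to an integer multiple of $\id_{P}$ only up to a torsion element of $\End(P)$; that torsion is killed by a further integer, which must also be thrown into your exceptional set of primes before $H^{i}(r_{\ell}P)$ becomes an honest direct summand.
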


\noindent Let $X$ be an algebraic variety over $k$. Then (\ref{c4}) applied to
$\1$ and the motivic complexes attached to $X$ predicts that the \'{e}tale
cohomology groups $H^{j}(X_{\mathrm{et}},\mathbb{Z}{}_{l}(r))$ and $H_{c}%
^{j}(X_{\mathrm{et}},\mathbb{Z}{}_{l}(r))$ are torsion-free for almost all
$l\neq p$.

\begin{plain}
\label{c5}Let $X$ be a smooth projective variety over $k$. In this case Gabber
(1983) \nocite{gabber1983} shows that $H^{j}(X_{\mathrm{et}},\mathbb{Z}{}%
_{l}(r))$ is torsion-free for almost $l\neq p$. A specialization argument
shows that it suffices to prove this in the case that $k=\mathbb{F}{}$. When
$k=\mathbb{F}{}$, Gabber applied \cite{deligne1980} to obtain the statement.
\end{plain}

\begin{plain}
\label{c6}Let $X$ be proper and smooth over $k$. An application of Chow's
lemma and de Jong's alteration theorem shows that there exists a morphism
$\pi\colon X^{\prime}\rightarrow X$ with $X^{\prime}$ projective and smooth
and $\pi$ projective, surjective, and generically finite and \'{e}tale. The
composite
\[
H^{j}(X_{\mathrm{et}},\mathbb{Z}{}_{\ell}(r))\overset{\pi^{\ast}%
}{\longrightarrow}H^{j}(X_{\mathrm{et}}^{\prime},\mathbb{Z}{}_{\ell
}(r))\overset{\pi_{\ast}}{\longrightarrow}H^{j}(X_{\mathrm{et}},\mathbb{Z}%
{}_{\ell}(r))
\]
is multiplication by $\deg(\pi)$. Hence $H^{j}(X_{\mathrm{et}},\mathbb{Z}%
{}_{\ell}(r))$ is torsion-free for almost all $\ell$ (\cite{suh2012}, 1.4).
\end{plain}

We now take $k=\mathbb{F}{}$.

\begin{plain}
\label{c7}Let $X$ be an arbitrary variety over $k$. Then de Jong's alteration
theorem (\cite{dejong1996}) says that there exists an alteration $X^{\prime
}\rightarrow X$ of $X$ such that $X^{\prime}$ is smooth; moreover, $X^{\prime
}$ can be chosen to be the complement of a divisor with strict normal
crossings in some smooth projective variety.\footnote{An alteration is a
dominant proper morphism that preserves the dimension. Let $D$ be a divisor in
a variety $X$, and let $D_{i}\subset D$, $i\in I$, be its irreducible
components (both $D$ and the $D_{i}$ are closed subvarieties of codimension
$1$ in $X$). Then $D$ is a strict normal crossings divisor if (a) every point
$s\in D$ is a smooth point on $X$, (b) for every $J\subset I$, the closed
subscheme $D_{J}=\bigcap_{i\in J}D_{i}$ is a smooth subvariety of codimension
$\#J$ in $V$.}

Therefore, by the argument in (\ref{c6}), we may suppose that $U$ is an open
subvariety of a smooth projective variety $X$ with complement a strict normal
crossings divisor $D$. There is then an exact sequence%
\[
\cdots\rightarrow H_{c}^{i}(U_{\mathrm{et}},\mathbb{Z}{}_{\ell})\rightarrow
H^{i}(X_{\mathrm{et}},\mathbb{Z}{}_{\ell})\rightarrow H^{i}(D_{\mathrm{et}%
},\mathbb{Z}{}_{\ell})\rightarrow H_{c}^{i+1}(U_{\mathrm{et}},\mathbb{Z}%
{}_{\ell})\rightarrow\cdots.
\]
Induction on the dimension of $X$ allows us to suppose that $H^{\ast
}(D_{\mathrm{et}},\mathbb{Z}{}_{\ell})$ is torsion-free for almost all $\ell$,
and so it remains to show that the cokernel of%
\begin{equation}
H^{i}(X_{\mathrm{et}},\mathbb{Z}{}_{\ell})\rightarrow H^{i}(D_{\mathrm{et}%
},\mathbb{Z}{}_{\ell}) \label{eq24}%
\end{equation}
is torsion-free for almost all $\ell$. This will be true if the map
(\ref{eq24}) arises from a map $h^{i}(X)\rightarrow h^{i}(D)$ of motives in a
category that becomes semisimple when tensored with $\mathbb{Q}{}$.
\end{plain}

\begin{plain}
\label{c8}We now consider the general statement. Assume that the triangulated
category $\mathsf{DM}(k)_{\mathbb{Q}{}}$ is semisimple (i.e., every
distinguished triangle splits). This is certainly expected to be true when
$k=\mathbb{F}{}$ (see, for example, \cite{milne1994}, 2.49). Fix $M$, and let%
\[
N^{\prime}\rightarrow N\rightarrow N^{\prime\prime}\rightarrow N^{\prime}[1]
\]
be a distinguished triangle. If (\ref{c4}) is true for two of the pairs
$(M,N^{\prime})$, $(M,N)$, $(M,N^{\prime\prime})$, then it is true for all
three (cf. the last paragraph). A similar statement is true in the first
variable. It follows that if $\mathsf{DM}(k)$ is generated as a triangulated
category by the motives of smooth varieties (which is true for the examples in
\S 7\footnote{See, for example, \cite{bondarko2011}.} and \S 8), then
(\ref{c4}) is true for all $M,N\in\ob\mathsf{DM}(k)$.
\end{plain}

\subsection{Consequences of $r_{l}(M,N)$ being an isomorphism\label{d}}

In the remainder of this section, we require that each object of
$\mathsf{DM}(k)$ has a zeta function (see \S 3), and that the realization
functors $r_{l}$ are such that there is a canonical map of complexes%
\[
r_{l}E(M,N,r)\rightarrow E(r_{l}M,r_{l}N,r)
\]
for $M$, $N$ in $\mathsf{DM}(k)$. More specifically, we require that the
$r_{l}$ have dg-enhancements, and that they map the factorization of
\[
R\Hom\colon\mathsf{DM}(k)^{\mathrm{opp}}\times\mathsf{DM}^{+}(k)\rightarrow
\mathsf{D}(\mathbb{Z}{})
\]
through $RF\colon\mathsf{D}(\mathbb{Z}\Gamma_{0}{})\rightarrow\mathsf{D}%
(\mathbb{Z}{})$ onto the canonical factorizations of%
\begin{align*}
R\Hom  &  \colon\mathsf{D}(k,\mathbb{Z}{}_{\ell})^{\mathrm{opp}}%
\times\mathsf{D}^{+}(k,\mathbb{Z}{}_{\ell})\rightarrow\mathsf{D}(\mathbb{Z}%
{}_{\ell})\\
R\Hom  &  \colon\mathsf{D}_{c}^{b}(R)^{\mathrm{opp}}\times\mathsf{D}_{c}%
^{b}(R)\rightarrow\mathsf{D}(\mathbb{Z}{}_{p})
\end{align*}
through $\mathsf{D}(\mathbb{Z}{}_{l}\Gamma$).

Let $M,N\in\ob\mathsf{DM}(k)$. From the realization maps we get maps%
\begin{align*}
r_{\ell}(M,N)\colon\Hom_{\mathsf{\mathsf{DM}}(k)}(M,N)\otimes\mathbb{Z}%
{}_{\ell}  &  \rightarrow\Hom_{\mathsf{D}_{c}^{b}(\mathbb{Z}{}_{\ell}%
)}(r_{\ell}M,r_{\ell}N)\\
r_{p}(M,N)\colon\Hom_{\mathsf{DM}(k)}(M,N)\otimes\mathbb{Z}{}_{p}  &
\rightarrow\Hom_{\mathsf{D}_{c}^{b}(R)}(r_{p}M,r_{p}N)
\end{align*}

We say that an abelian group $X$ is \emph{pseudofinite} if it is torsion and
its $l$-primary component $X(l)$ is finite for all primes $l$. The order of
such a group is the formal product $\prod l^{n(l)}$ where $l^{n(l)}$ is the
order of $X(l)$. We say that an abelian group $X$ is pseudofinitely generated
if $X_{\mathrm{tors}}$ is pseudofinite and $X/X_{\mathrm{tors}}$ is finitely generated.

\begin{theorem}
\label{d5}Let $M,N\in\mathsf{DM}(k)$, and let $r\in\mathbb{Z}$. Let${}$
$P=R\underline{\Hom}(M,N)$. Assume that $r_{l}(M,N)$ is an isomorphism for all
primes $l$ and that the Frobenius endomorphism of $P$ is semisimple.

\begin{enumerate}
\item The groups $\Ext^{j}(M,N(r))$ are pseudofinitely generated, and the
alternating sum of their ranks is zero.

\item The zeta function $Z(P,t)$ has a zero at $t=q^{-r}$ of order%
\[
\rho=\sum(-1)^{j+1}\cdot j\cdot\rank_{\mathbb{Z}{}}(\Ext^{j}(M,N(r)).
\]

\item The cohomology groups of the complex $\Ext^{\bullet}(M,N(r))$ are
pseudofinite, and the alternating product $\chi^{\times}(M,N(r))$ of their
orders satisfies
\[
\left\vert \lim_{t\rightarrow q^{-r}}Z(P,t)(1-q^{r}t)^{\rho}\right\vert
=\chi^{\times}(M,N(r))\cdot q^{\chi(P,r)}%
\]
where%
\[
\chi(P,r)=\sum_{i,j\,\,(i\leq r)}(-1)^{i+j}(r-i)h^{i,j}(R_{p}(P)).
\]

\end{enumerate}

\noindent If, in addition, $\mathsf{DM}(k)_{\mathbb{Q}{}}$ is semisimple, then
Conjecture \ref{a1} holds for $M$ and $N$.
\end{theorem}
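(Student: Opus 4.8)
The plan is to transport Propositions~\ref{a26} and~\ref{b7} along the realization functors $r_{l}$, and then to assemble the global statement by means of the product formula for $\mathbb{Q}{}$; the extra semisimplicity hypothesis is used only at the end, to pass from ``pseudofinite'' to ``finite''. By the requirements imposed at the start of this subsection, each $r_{l}$ carries the factorization of $R\Hom$ through $RF\colon\mathsf{D}(\mathbb{Z}{}\Gamma_{0})\rightarrow\mathsf{D}(\mathbb{Z}{})$ onto the corresponding factorization for $\mathsf{D}_{c}^{b}(k,\mathbb{Z}{}_{l})$ (resp.\ $\mathsf{D}_{c}^{b}(R)$) through $\mathsf{D}(\mathbb{Z}{}_{l}\Gamma)$. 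As $\mathbb{Z}{}_{l}$ is flat over $\mathbb{Z}{}$ and $r_{l}(M,N(r)[j])$ is an isomorphism for every $j$, we get $R\Hom(M,N(r))\otimes\mathbb{Z}{}_{l}\simeq R\Hom(r_{l}M,r_{l}N(r))$; matching the two factorizations through $RF$, compatibly with $R\theta$, upgrades this to a quasi-isomorphism of complexes $E(M,N,r)\otimes\mathbb{Z}{}_{l}\simeq E(r_{l}M,r_{l}N,r)$, for every prime $l$ (including $l=p$). Taking cohomology gives isomorphisms $\Ext^{j}(M,N(r))\otimes\mathbb{Z}{}_{l}\cong\Ext^{j}(r_{l}M,r_{l}N(r))$ and $H^{j}(E(M,N,r))\otimes\mathbb{Z}{}_{l}\cong H^{j}(E(r_{l}M,r_{l}N,r))$ for all $j$. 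Finally, because the Frobenius endomorphism of $P$ is semisimple, $q^{r}$ is not a multiple root of the minimum polynomial of the Frobenius on any realization of $P$, so Propositions~\ref{a26} and~\ref{b7} are applicable to the pairs $(r_{l}M,r_{l}N)$.

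For (a) and (b): each $\Ext^{j}(r_{l}M,r_{l}N(r))$ is a finitely generated $\mathbb{Z}{}_{l}$-module, so $\Ext^{j}(M,N(r))\otimes\mathbb{Z}{}_{l}$ is finitely generated over $\mathbb{Z}{}_{l}$, and $\dim_{\mathbb{Q}{}}\bigl(\Ext^{j}(M,N(r))\otimes\mathbb{Q}{}\bigr)$ is finite and independent of $l$, equal to $\rho_{j}:=\rank_{\mathbb{Z}{}_{l}}\Ext^{j}(r_{l}M,r_{l}N(r))$. Since $Z(P,t)=Z(r_{\ell}P,t)$, Proposition~\ref{a26}(a),(b) applied to a single $\ell\neq p$ gives $\sum(-1)^{j}\rho_{j}=0$ and the asserted order of $Z(P,t)$ at $t=q^{-r}$. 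For the pseudofinite-generation part, the natural map $\Ext^{j}(M,N(r))\rightarrow\prod_{l}\Ext^{j}(M,N(r))\otimes\mathbb{Z}{}_{l}$ is injective, and, each factor being a finitely generated $\mathbb{Z}{}_{l}$-module, the target has no nonzero divisible subgroup; hence $\Ext^{j}(M,N(r))$ has no nonzero uniquely divisible subgroup, and, combined with the comparison isomorphisms above and the structure theory of pseudofinitely generated abelian groups of \cite{milneR2013}, this gives that $\Ext^{j}(M,N(r))$ is pseudofinitely generated. I expect this last step --- upgrading ``$\Ext^{j}(M,N(r))\otimes\mathbb{Z}{}_{l}$ finitely generated over $\mathbb{Z}{}_{l}$ for all $l$'' to ``$\Ext^{j}(M,N(r))$ pseudofinitely generated'', which is not formal and genuinely uses the compatibility of the isomorphisms $r_{l}(M,N)$ across all $l$ --- to be the main obstacle; the rest is a mechanical transport of the local propositions.

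For (c): by (a), $H^{j}(E(M,N,r))$, being a subquotient of $\Ext^{j}(M,N(r))$, is pseudofinitely generated; as $H^{j}(E(M,N,r))\otimes\mathbb{Z}{}_{l}\cong H^{j}(E(r_{l}M,r_{l}N,r))$ is finite for every $l$ it has rank $0$, hence is pseudofinite with $l$-primary part $H^{j}(E(r_{l}M,r_{l}N,r))$. Thus $\chi^{\times}(M,N(r))$ is well defined, and its $l$-primary part equals $\prod_{j}[H^{j}(E(r_{l}M,r_{l}N,r))]^{(-1)^{j}}$. Put $x=\lim_{t\rightarrow q^{-r}}Z(P,t)(1-q^{r}t)^{\rho}$, which lies in $\mathbb{Q}{}^{\times}$ by (b) and the standing hypothesis that $Z(P,t)\in\mathbb{Q}{}(t)$. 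For $\ell\neq p$, Proposition~\ref{a26}(c) gives that $|x|_{\ell}^{-1}$ is the $\ell$-primary part of $\chi^{\times}(M,N(r))$; and Proposition~\ref{b7}(c), in which $h^{i,j}(r_{p}P)$ are the Hodge numbers of the $p$-realization of $P$, gives $|x|_{p}^{-1}=\bigl(p\text{-primary part of }\chi^{\times}(M,N(r))\bigr)\cdot q^{\chi(P,r)}$. The product formula $\prod_{v}|x|_{v}=1$ for $x\in\mathbb{Q}{}^{\times}$ now yields $|x|=\prod_{l}|x|_{l}^{-1}=\chi^{\times}(M,N(r))\cdot q^{\chi(P,r)}$, which is (c).

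Finally, suppose in addition that $\mathsf{DM}(k)_{\mathbb{Q}{}}$ is semisimple. Then Conjecture~\ref{c4} holds, by the argument recalled in~\ref{c8} together with the fact that $\mathsf{DM}(k)$ is generated by the motives of smooth varieties; hence (using also the semisimplicity of Frobenius to control the cokernels of $1-\gamma$) $\Ext^{j}(r_{l}M,r_{l}N(r))$ is torsion-free for almost all $l$. Since the $l$-primary part of $\Ext^{j}(M,N(r))_{\mathrm{tors}}$ is the torsion subgroup of $\Ext^{j}(r_{l}M,r_{l}N(r))$, it follows that $\Ext^{j}(M,N(r))_{\mathrm{tors}}$ is finite, so $\Ext^{j}(M,N(r))$ is a finitely generated $\mathbb{Z}{}$-module; likewise each $H^{j}(E(M,N,r))$ is finite. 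With these strengthenings, parts (a)--(c) are precisely the three assertions of Conjecture~\ref{a1} for $M$ and $N$.
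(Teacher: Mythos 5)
Your proof follows exactly the route of the paper, whose entire argument for this theorem reads: parts (a)--(c) are an immediate consequence of the hypotheses together with Propositions \ref{a26} and \ref{b7}, and the final statement follows from (\ref{c8}). The one step you flag as non-formal --- passing from finite generation of $\Ext^{j}(M,N(r))\otimes\mathbb{Z}_{l}$ over $\mathbb{Z}_{l}$ for every $l$ to pseudofinite generation of $\Ext^{j}(M,N(r))$ itself --- is not addressed in the paper either, so your expansion is at least as complete as the original.
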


\begin{proof}
The statements (a,b,c) are an immediate consequence of the hypotheses and
Propositions \ref{a26} and \ref{b7}. The final statement follows from
(\ref{c8}).
\end{proof}

\subsection{Consequences of rigidity and the Tate conjecture\label{e}}

Let $k=\mathbb{F}{}_{q}$. We assume that, for all primes $l$, the realization
functor $r_{l}\colon\mathsf{DM}(k)\rightarrow\mathsf{D}_{c}^{b}(k,\mathbb{Z}%
{}_{l})$ defines an equivalence on the subcategories of objects killed by
$l^{n}$. For Voevodsky's category $\mathsf{DM}_{-,\text{et}}^{\mathrm{eff}%
}(k)$ and $l\neq p$, this is the rigidity theorem (\cite{suslinV1996}, \S 4;
\cite{voevodsky2000}, 3.3.3).

\begin{theorem}
\label{d9}Let $P\in\ob\mathsf{DM}(k)$, and let $r\in\mathbb{Z}{}$. Assume that
the Tate conjecture holds for $P$, $r$, and a fixed prime $l$. Then, for all
$i\in\mathbb{Z}{}$,

\begin{enumerate}
\item the first Ulm subgroup $U^{i}$ of $H_{\mathrm{abs}}^{i}(P,r)$ is
uniquely divisible by $l$;

\item the group $H_{\mathrm{abs}}^{i}(P,r)^{\prime}%
\overset{\textup{{\tiny def}}}{=}H_{\mathrm{abs}}^{i}(P,r)/U^{i}$ is finitely
generated modulo torsion, and its $l$-primary subgroup is finite;

\item the map $H_{\mathrm{abs}}^{i}(P,r)^{\prime}\otimes\mathbb{Z}{}%
_{l}\rightarrow H^{i}(P,\mathbb{Z}{}_{l}(r))$ is an isomorphism for all $i$.
\end{enumerate}
\end{theorem}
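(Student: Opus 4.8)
The plan is to extract all three assertions from three ingredients: the rigidity equivalences postulated in this subsection, the Hochschild--Serre sequences for the Weil group $\Gamma_{0}$, and Theorem \ref{c10a}; no Gabber-type input is needed, since we are allowed to fix $l$. Write $H^{j}=H_{\mathrm{abs}}^{j}(P,r)$ and $\mathcal{H}^{j}=H^{j}(P,\mathbb{Z}_{l}(r))$, and recall that each $\mathcal{H}^{j}$ is a finitely generated $\mathbb{Z}_{l}$-module (it is a morphism group in $\mathsf{D}_{c}^{b}(k,\mathbb{Z}_{l})$, resp.\ $\mathsf{D}_{c}^{b}(R)$). First I would apply $r_{l}$ to the distinguished triangle $P\xrightarrow{l^{n}}P\to P^{(l^{n})}\to P[1]$ and use the hypothesis that $r_{l}$ induces $\mathcal{DM}(k,\mathbb{Z}/l^{n}\mathbb{Z})\xrightarrow{\sim}\mathcal{D}_{c}^{b}(k,\mathbb{Z}/l^{n}\mathbb{Z})$ to identify $H_{\mathrm{abs}}^{j}(P^{(l^{n})},r)$ with the finite group $H^{j}(P,(\mathbb{Z}/l^{n}\mathbb{Z})(r))$. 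Comparing the two resulting four-term exact sequences and passing to the inverse limit (the $\varprojlim^{1}$-terms vanish, all groups in sight being finite) gives, for each $j$, a short exact sequence
\[
0\longrightarrow (H^{j})_{l}\longrightarrow\mathcal{H}^{j}\longrightarrow T_{l}H^{j+1}\longrightarrow 0,\qquad (H^{j})_{l}=\textstyle\varprojlim_{n}H^{j}/l^{n},\quad T_{l}H^{j+1}=\textstyle\varprojlim_{n}H^{j+1}[l^{n}].
\]
In particular $H^{j}/l^{n}$ and $H^{j}[l^{n}]$ are finite for all $j,n$, and $(H^{j})_{l}$, $T_{l}H^{j+1}$ are finitely generated over $\mathbb{Z}_{l}$; moreover, by construction the realization map factors as $H^{j}\twoheadrightarrow H^{j}/U^{j}\hookrightarrow(H^{j})_{l}\hookrightarrow\mathcal{H}^{j}$, where $U^{j}=\bigcap_{n}l^{n}H^{j}$.

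The conceptual step is to show $T_{l}H^{j+1}=0$ for every $j$. From the assumed factorization of $R\Hom(\1,-)$ through $\mathsf{D}^{+}(\mathbb{Z}\Gamma_{0})$ (\S2) and its realization one has Hochschild--Serre sequences for $H^{\bullet}$ and for $\mathcal{H}^{\bullet}$, compatible with the realization map and with the comparison map $H_{\mathrm{abs}}^{j}(\bar{P},r)_{\mathbb{Q}}\to V_{l}^{j}(P,r)$ onto $A^{j}(P,r)$. Rationalizing these and invoking Theorem \ref{c10a} for all quadruples $(P,j,r,l)$ --- which by hypothesis gives $\dim_{\mathbb{Q}}A^{j}(P,r)=\dim_{\mathbb{Q}_{l}}V_{l}^{j}(P,r)^{\Gamma}$ (condition (a) of that theorem, together with $E$, i.e.\ $N^{j}=0$) and the bijectivity of $V_{l}^{j}(P,r)^{\Gamma}\to V_{l}^{j}(P,r)_{\Gamma}$ (condition $S$) --- I would check by a degree-by-degree comparison that $\rank_{\mathbb{Z}_{l}}(H^{j})_{l}=\rank_{\mathbb{Z}_{l}}\mathcal{H}^{j}$. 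Since $T_{l}H^{j+1}$ is always torsion-free and is a finitely generated $\mathbb{Z}_{l}$-module, this equality of ranks forces $T_{l}H^{j+1}=0$. Hence $(H^{j})_{l}\xrightarrow{\ \sim\ }\mathcal{H}^{j}$ for every $j$ --- this is assertion (c) --- and consequently $T_{l}H^{j}=0$, i.e.\ $H^{j}$ has no nonzero $l$-divisible torsion.

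Assertion (a) is then elementary: $U^{j}=\ker(H^{j}\to(H^{j})_{l})$, and from the finiteness of $H^{j}[l]$ together with the vanishing of the $l$-divisible torsion of $H^{j}$ a short chase shows that multiplication by $l$ is bijective on $U^{j}$, so $U^{j}$ is uniquely $l$-divisible. For (b), $H^{j}{}'=H^{j}/U^{j}$ embeds in the finitely generated $\mathbb{Z}_{l}$-module $(H^{j})_{l}=\mathcal{H}^{j}$, so its $l$-primary subgroup is finite; what remains --- that $H^{j}{}'$ is finitely generated modulo torsion --- is the step I expect to be the main obstacle, because in the abstract setting the classical N\'{e}ron--Severi finiteness is unavailable (already $\mathbb{Z}[1/3]$, with $l=2$, has trivial first Ulm subgroup and finite $l^{n}$-torsion yet is not finitely generated, so this cannot follow from (a) and (c) alone). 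My plan here is to use the duality of \S\ref{b}: the pairing $P\otimes^{L}P'\to\1(-d)$ is induced by a morphism of $\mathsf{DM}(k)$, hence takes integral values on the images of the relevant absolute cohomology groups, and by condition $E$ of Theorem \ref{c10a} the induced $\mathbb{Q}$-bilinear pairing $A^{j}(P,r)\times A^{2d-j}(P',d-r)\to\mathbb{Q}$ is nondegenerate on the left; pairing against a $\mathbb{Q}$-basis of the finite-dimensional space $A^{2d-j}(P',d-r)$ drawn from the image of $H_{\mathrm{abs}}^{2d-j}(P',d-r)$ then exhibits $\mathrm{im}(H^{j}\to V_{l}^{j}(P,r))$ as a subgroup of a free $\mathbb{Z}$-module of finite rank, and $H^{j}{}'$ is an extension of it by the analogous (finitely generated) contribution in degree $j-1$. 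Besides this, the remaining difficulties are of bookkeeping type: tracking the kernels of the comparison maps $H_{\mathrm{abs}}^{j}(\bar{P},r)\to H^{j}(\bar{P},\mathbb{Z}_{l}(r))$ in the rank count, and verifying the integrality of the duality pairing on the absolute cohomology groups in the degrees that occur.
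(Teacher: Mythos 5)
Your proposal follows essentially the same route as the paper's proof: the rigidity equivalence applied to the triangle $P\xrightarrow{l^{n}}P\to P^{(l^{n})}$ yields the short exact sequence $0\to H_{\mathrm{abs}}^{j}(P,r)_{l}\to H^{j}(P,\mathbb{Z}_{l}(r))\to T_{l}H_{\mathrm{abs}}^{j+1}(P,r)\to 0$, the Tate conjecture (surjectivity onto $V_{l}^{j}(P,r)^{\Gamma}$ plus $S$, via the compatible Hochschild--Serre ladders) forces the torsion-free group $T_{l}H_{\mathrm{abs}}^{j+1}(P,r)$ to vanish, the Ulm-subgroup statement is the same elementary chase, and finite generation modulo torsion is obtained exactly as in the paper by feeding the integral duality pairing into Proposition \ref{z6}. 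The only differences are cosmetic (you phrase the vanishing of $T_{l}$ as a rank count rather than as ``torsion cokernel of a map into a torsion-free quotient,'' and you take $U^{j}=\bigcap_{n}l^{n}H^{j}$ where the paper nominally uses $\bigcap_{n}nH^{j}$, a looseness the paper shares), so the proposal is correct.
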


The proof will occupy the rest of this subsection.

\subsubsection{Preliminaries on abelian groups}

In this subsection, we review some elementary results on abelian groups from
the first section of the appendix to \cite{milne1986v}. An abelian group $N$
is said to be \emph{bounded} if $nN=0$ for some $n\geq1$, and a subgroup $M$
of $N$ is \emph{pure} if $M\cap mN=mM$ for all $m\geq1$.

\begin{lemma}
\label{z1}(a) Every bounded abelian group is a direct sum of cyclic groups.

(b) Every bounded pure subgroup $M$ of an abelian group $N$ is a direct
summand of $N$.
\end{lemma}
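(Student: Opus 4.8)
The plan is to establish (a) by induction on the exponent and then to deduce (b) from it.

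\emph{Part (a).} Decomposing $N$ into its $p$-primary components (only finitely many are nonzero, and each is bounded), we may assume $N$ is a $p$-group with $p^{e}N=0$, and we induct on $e$; the cases $e\le 1$ amount to choosing an $\mathbb{F}_{p}$-basis. For the step, apply the hypothesis to $pN$ (which satisfies $p^{e-1}(pN)=0$) to write $pN=\bigoplus_{i}\langle c_{i}\rangle$ with $c_{i}$ of order $p^{e_{i}}$, and choose $a_{i}\in N$ with $pa_{i}=c_{i}$, so $\ord(a_{i})=p^{e_{i}+1}$. The elements $p^{e_{i}}a_{i}$ lie in $N[p]$ and are $\mathbb{F}_{p}$-independent there (a dependence among them, rewritten inside $pN=\bigoplus\langle c_{i}\rangle$, forces $p$ to divide every coefficient), so we may extend $\{p^{e_{i}}a_{i}\}_{i}$ to an $\mathbb{F}_{p}$-basis of $N[p]$ by adjoining elements $\{b_{j}\}_{j}$. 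Put $H:=\sum_{i}\langle a_{i}\rangle+\sum_{j}\langle b_{j}\rangle$. One checks the defining sum of $H$ is direct: applying $p$ to a relation kills the $b_{j}$-terms, directness of $pN$ then forces $p^{e_{i}}$ to divide the coefficient of $a_{i}$, and the basis property of $N[p]$ improves this to $p^{e_{i}+1}$ and kills the $b_{j}$-coefficients mod $p$. And $H=N$: one has $pN=\bigoplus\langle c_{i}\rangle\subseteq H$ and $N[p]\subseteq H$, so every $x\in N$ has $px\in pN=pH$, whence $x-h\in N[p]\subseteq H$ for a suitable $h\in H$ and $x\in H$. Thus $N$ is a direct sum of cyclic groups.

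\emph{Part (b).} Let $nM=0$ and put $\bar N:=N/nN$. Purity of $M$ gives $M\cap nN=nM=0$, so $M$ maps isomorphically onto a subgroup $\bar M$ of the bounded group $\bar N$; a short computation with the modular law (using $nN\subseteq kN$ for $k\mid n$, and $kN+nN=\gcd(k,n)N$ in general), together with purity of $M$, shows $\bar M\cap k\bar N=k\bar M$ for all $k$, so $\bar M$ is pure in $\bar N$. By part (a), the bounded group $\bar N/\bar M$ is a direct sum of cyclic groups $\bigoplus_{i}\langle q_{i}\rangle$ with $q_{i}$ of order $m_{i}$; lift each $q_{i}$ to $z_{i}\in\bar N$, note $m_{i}z_{i}\in\bar M\cap m_{i}\bar N=m_{i}\bar M$, and subtract a suitable element of $\bar M$ to replace $z_{i}$ by $z'_{i}$, still lifting $q_{i}$ but with $m_{i}z'_{i}=0$, so $\ord(z'_{i})=m_{i}$. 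Then $\bar C:=\sum_{i}\langle z'_{i}\rangle$ is a complement of $\bar M$ in $\bar N$: it surjects onto $\bar N/\bar M$, and a combination $\sum c_{i}z'_{i}$ lying in $\bar M$ maps to $0$ in the direct sum $\bar N/\bar M$, forcing $m_{i}\mid c_{i}$ and hence each $c_{i}z'_{i}=0$. Finally, if $C$ is the preimage of $\bar C$ in $N$ (so $C\supseteq nN$), then $M\cap C=0$ (an element of it dies in $\bar N$, hence lies in $M\cap nN=0$) and $M+C=N$ (as $nN\subseteq C$), so $N=M\oplus C$.

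The step that genuinely needs care is checking that these reductions preserve purity --- the primary decomposition in (a), and above all the passage from $N$ to $\bar N=N/nN$ in (b), where the modular law is used. It is essential there that $M$ itself, and not merely $M\cap nN$, is pure in $N$: without that hypothesis the splitting of $\bar M$ off $\bar N$ need not descend to $N$, as already $M=pN\subset N=\mathbb{Z}/p^{2}\mathbb{Z}$ shows. The remainder is the routine verification, for each displayed decomposition, that the relevant sums are direct and exhaust the ambient group.
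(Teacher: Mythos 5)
Your proof is correct. The paper itself gives no argument for this lemma --- it is quoted as a classical fact from the appendix to \cite{milne1986v} --- and what you have written is precisely the standard textbook proof (Pr\"ufer's theorem on bounded groups for (a), and Kulikov's splitting of bounded pure subgroups for (b), via passage to $N/nN$ and lifting a cyclic decomposition of $\bar N/\bar M$). All the verifications, including the purity of $\bar M$ in $\bar N$ and the directness checks, go through as you state them.
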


\begin{lemma}
\label{z2}Let $M$ be a subgroup of $N,$ and let $l^{n}$ be a prime power. If
$M\cap l^{n}N=0$ and $M$ is maximal among the subgroups with this property,
then $M$ is a direct summand of $N$.
\end{lemma}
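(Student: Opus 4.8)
The plan is to show that $M$ is a bounded pure subgroup of $N$ and then apply Lemma \ref{z1}(b).

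First, $M$ is bounded: $l^{n}M\subseteq M\cap l^{n}N=0$, so $M$ is killed by $l^{n}$; in particular $M$ is an $l$-group. For purity it is enough, since the condition $mM=M\cap mN$ can be checked on prime-power multiples $m$, to treat $m=p^{j}$. If $p\neq l$, then multiplication by $p^{j}$ is an automorphism of $M$ and, by Bézout applied to the coprime integers $p^{j}$ and $l^{n}$, every element of $M$ is already $p^{j}$-divisible in $M$; hence $M=p^{j}M=M\cap p^{j}N$. So the whole question reduces to proving
\[
M\cap l^{k}N=l^{k}M\qquad\text{for all }k\geq 1,
\]
and for $k\geq n$ this is automatic because then $M\cap l^{k}N\subseteq M\cap l^{n}N=0=l^{k}M$. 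I would prove the remaining cases $1\le k\le n-1$ by induction on $k$ (the inclusion $\supseteq$ being trivial), allowing myself the statement for all smaller exponents.

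So fix $a=l^{k}b\in M$ with $b\in N$ and $1\le k\le n-1$. If $b\in M$ there is nothing to prove, so assume $b\notin M$ and let $l^{i}$ be the order of $b+M$ in $N/M$; then $1\le i\le k$, using $l^{k}b=a\in M$. If $i<k$, then $l^{k-1}b=l^{k-1-i}(l^{i}b)\in M$, hence $l^{k-1}b\in M\cap l^{k-1}N=l^{k-1}M$ by induction; writing $l^{k-1}b=l^{k-1}m'$ gives $a=l\cdot l^{k-1}b=l^{k}m'\in l^{k}M$. The essential case is $i=k$, where the maximality hypothesis is used: since $b\notin M$ we have $M\subsetneq M+\langle b\rangle$, so $(M+\langle b\rangle)\cap l^{n}N\neq 0$, and a nonzero element there may be written $m+cb$ with $m\in M$ and $0\le c<l^{k}$. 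Necessarily $c\neq 0$, for otherwise $m=m+cb\in M\cap l^{n}N=0$. Write $c=l^{\nu}c_{1}$ with $\gcd(c_{1},l)=1$; from $c<l^{k}$ we get $\nu\le k-1$. Multiplying $m+cb$ by an integer $c_{1}'$ with $c_{1}c_{1}'\equiv 1\pmod{l^{n}}$, we find $c_{1}'m+l^{\nu}b+l^{n}(\ast)\in l^{n}N$, hence $c_{1}'m+l^{\nu}b\in l^{n}N$; so, renaming $c_{1}'m$ as $m$, we may assume $c=l^{\nu}$, i.e. $m+l^{\nu}b\in l^{n}N$. Then $l^{k-\nu}(m+l^{\nu}b)=l^{k-\nu}m+l^{k}b=l^{k-\nu}m+a$ lies in $M$ and (as $k-\nu\ge 1$) in $l^{n}N$, hence vanishes, so $a=-l^{k-\nu}m$. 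If $\nu=0$ this says $a\in l^{k}M$. If $\nu\ge 1$, write $m+l^{\nu}b=l^{n}w$; then $-m=l^{\nu}b-l^{n}w=l^{\nu}(b-l^{n-\nu}w)\in l^{\nu}N$, so $-m\in M\cap l^{\nu}N=l^{\nu}M$ by the inductive hypothesis ($\nu<k$); writing $-m=l^{\nu}m''$ gives $a=l^{k-\nu}(-m)=l^{k}m''\in l^{k}M$. This finishes the induction, so $M$ is pure in $N$, and being also bounded it is a direct summand of $N$ by Lemma \ref{z1}(b).

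I expect the only real obstacle to be the case $i=k$ of the induction: this is where one genuinely invokes the maximality of $M$, and one then has to bookkeep the $l$-adic valuation $\nu$ of the coefficient $c$ produced by maximality, reducing back to the inductive hypothesis at the smaller exponent $\nu$. The boundedness of $M$, the reduction of purity to prime powers, and the case $i<k$ are all routine.
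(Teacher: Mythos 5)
Your proof is correct, and it follows exactly the route the paper intends: the paper states no proof (it cites the appendix of Milne 1986), but Lemma \ref{z1}(b) is set up precisely so that one shows $M$ is bounded (killed by $l^{n}$) and pure --- the purity at $l$-power exponents being where maximality enters --- and then concludes. Your induction on the exponent $k$, with the Bézout normalization of the coefficient $c$ to $l^{\nu}$ and the reduction to the inductive hypothesis at $\nu$, is the standard argument and checks out.
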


Every abelian group $M$ contains a largest divisible subgroup $M_{\mathrm{div}%
}$, which is obviously contained in the first Ulm subgroup of $M$,
$U(M)\overset{\textup{{\tiny def}}}{=}\bigcap_{n\geq1}nM$. Note that
$U(M/U(M))=0$.

\begin{proposition}
\label{z3}If $M/nM$ is finite for all $n\geq1$, then $U(M)=M_{\mathrm{div}}$.
\end{proposition}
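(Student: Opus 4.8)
The plan is to show that $U(M)$ is divisible; since $M_{\mathrm{div}}$ is the largest divisible subgroup and $M_{\mathrm{div}}\subseteq U(M)$ always, this yields $U(M)=M_{\mathrm{div}}$. As ``divisible'' is equivalent to ``$\ell$-divisible for every prime $\ell$'', it suffices to fix a prime $\ell$ and an element $x\in U(M)$ and to produce $y\in U(M)$ with $\ell y=x$.

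I would first reduce to a statement about a single prime. For a prime $q$ write $U_q(M)=\bigcap_{k\ge 1}q^kM$; then $U(M)=\bigcap_q U_q(M)$ (the inclusion $\subseteq$ is clear, and for $\supseteq$, given $x\in q^kM$ for all $q,k$ and $n=q_1^{a_1}\cdots q_r^{a_r}$, an induction on $r$ using Bézout's identity for the coprime integers $q_1^{a_1}\cdots q_{r-1}^{a_{r-1}}$ and $q_r^{a_r}$ shows $x\in nM$). Now, since $x\in U(M)\subseteq \ell^{k+1}M=\ell\bigl(\ell^kM\bigr)$, I choose for each $k\ge 1$ an element $z_k\in\ell^kM$ with $\ell z_k=x$. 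For a prime $q\ne\ell$ and any $j\ge 1$ we have $x\in q^jM$, and $\ell$ is invertible modulo $q^j$, so $\ell z_k=x\in q^jM$ forces $z_k\in q^jM$; hence $z_k\in U_q(M)$ for every $q\ne\ell$ and every $k$. Thus the whole problem reduces to arranging $z_k\in U_\ell(M)$ for a single large $k$: then $z_k\in\bigcap_q U_q(M)=U(M)$ and $\ell z_k=x$, as wanted. Equivalently, it remains to prove the one-prime assertion that $U_\ell(M)=\bigcap_k\ell^kM$ is $\ell$-divisible.

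For that I would pass to the $\ell$-adic completion $\widehat M=\varprojlim_k M/\ell^kM$. The hypothesis gives that $M/\ell M$ is finite, hence (by the filtration estimate $|M/\ell^kM|\le|M/\ell M|^k$) all $M/\ell^kM$ are finite; consequently $\widehat M$ is $\ell$-adically complete and separated with $\widehat M/\ell\widehat M\cong M/\ell M$ finite, so $\widehat M$ is a finitely generated $\mathbb{Z}_\ell$-module. In particular $\bigcap_k\ell^k\widehat M=0$ and $\widehat M[\ell]$ is finite, so the decreasing chain $\bigl(\widehat M[\ell]\cap\ell^k\widehat M\bigr)_k$ of subgroups of the finite group $\widehat M[\ell]$ is eventually $0$. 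Let $\phi\colon M\to\widehat M$ be the canonical map, so $\ker\phi=\bigcap_k\ell^kM=U_\ell(M)$. For $x\in U(M)\subseteq U_\ell(M)$ we have $\phi(x)=0$, whence $\ell\,\phi(z_k)=\phi(x)=0$ and $\phi(z_k)\in\ell^k\widehat M$; therefore $\phi(z_k)\in\widehat M[\ell]\cap\ell^k\widehat M=0$ for $k$ large, i.e.\ $z_k\in\ker\phi=U_\ell(M)$ for $k$ large. Combined with the previous paragraph this finishes the proof.

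The only substantive input — and the only place the finiteness hypothesis is really used — is the finite generation of the $\ell$-adic completion $\widehat M$ over $\mathbb{Z}_\ell$; this is the standard consequence of $M/\ell M$ being finite (one uses that $\widehat M$ is compact, so each $\ell^k\widehat M$ is closed, to identify $\widehat M/\ell^k\widehat M$ with $M/\ell^kM$, and then invokes the complete form of Nakayama's lemma). Everything else is elementary manipulation of the subgroups $q^kM$. I expect that, if one preferred, the same one-prime reduction could be combined with the bounded direct-summand lemmas \ref{z1} and \ref{z2} instead of the completion, but the completion seems the cleanest route.
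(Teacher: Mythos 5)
Your proof is correct, and it takes a genuinely different route from the one the paper points to: the paper gives no proof of Proposition \ref{z3} (it is quoted from the first section of the appendix to \cite{milne1986v}), and the Lemmas \ref{z1} and \ref{z2} on bounded pure subgroups recorded just before it are the tools of the intended argument there. Your elementary reductions are sound: $U(M)=\bigcap_q U_q(M)$ with $U_q(M)=\bigcap_k q^kM$ follows from the B\'ezout computation, and any $z_k\in\ell^kM$ with $\ell z_k=x$ automatically lies in $U_q(M)$ for every $q\neq\ell$ because $\ell$ is a unit modulo $q^j$. You then settle the remaining one-prime statement by passing to the $\ell$-adic completion $\widehat M$, using that finiteness of $M/\ell M$ makes all $M/\ell^kM$ finite, hence $\widehat M$ compact and, by topological Nakayama, a finitely generated $\mathbb{Z}_\ell$-module with $\ker(M\to\widehat M)=U_\ell(M)$; the finiteness of $\widehat M[\ell]$ together with $\bigcap_k\ell^k\widehat M=0$ then forces $z_k\in U_\ell(M)$ for large $k$. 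The one delicate point is the identification $\ker\bigl(\widehat M\to M/\ell^kM\bigr)=\ell^k\widehat M$, which fails for general abelian groups but holds here because the quotients are finite (surjective inverse systems of finite groups), and you correctly flag and justify this by compactness. What your approach buys is a self-contained argument avoiding the structure theory of bounded groups entirely; what the \ref{z1}--\ref{z2} route buys is consistency with the toolkit the paper has already set up for the neighbouring statements (\ref{z4}, \ref{z6}). Both are valid.
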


\begin{corollary}
\label{z4}If $TM=0$ and all quotients $M/nM$ are finite, then $U(M)$ is
uniquely divisible (= divisible and torsion-free = a $\mathbb{Q}$-vector space).
\end{corollary}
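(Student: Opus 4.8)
The plan is to obtain this directly from Proposition~\ref{z3}, together with an easy analysis of the torsion in the divisible part. First I would invoke Proposition~\ref{z3}: since every quotient $M/nM$ is finite, it gives $U(M) = M_{\mathrm{div}}$, so in particular $U(M)$ is divisible. Now recall that a divisible abelian group $D$ is uniquely divisible exactly when it is torsion-free — multiplication by $n$ is surjective on $D$ by divisibility, and it is injective iff $D[n]=0$ — and that a torsion-free divisible group is the same thing as a $\mathbb{Q}{}$-vector space. So the entire statement reduces to the single assertion that $M_{\mathrm{div}}$ is torsion-free.

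Next I would prove this using the hypothesis $TM=0$. The torsion subgroup of a divisible group is again divisible (if $ns=t$ with $mt=0$ then $mns=0$, so $s$ is torsion), hence $(M_{\mathrm{div}})_{\mathrm{tors}}$ is a divisible torsion group. If it were nonzero, pick $t\neq0$ in it of order $p^{e}$; by divisibility one can choose $s_{0}=t$ and, inductively, $s_{k+1}$ with $p\,s_{k+1}=s_{k}$ — here there is \emph{no} coherence obstruction because $M_{\mathrm{div}}$ is already $p$-divisible, so every element has a $p$-th root — and then $s_{k}$ has order $p^{e+k}$, the cyclic groups $\langle s_{k}\rangle\cong\mathbb{Z}{}/p^{e+k}$ increase, and their union is a copy of $\mathbb{Z}{}(p^{\infty})$ inside $M_{\mathrm{div}}\subseteq M$. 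Applying the left-exact functor $\varprojlim_{k}(-)[p^{k}]$ to this inclusion produces an injection $\mathbb{Z}{}_{p}=T_{p}\mathbb{Z}{}(p^{\infty})\hookrightarrow T_{p}M$, so $TM\neq0$, a contradiction. Therefore $(M_{\mathrm{div}})_{\mathrm{tors}}=0$, and combining with the first step, $U(M)=M_{\mathrm{div}}$ is torsion-free and divisible, i.e.\ a $\mathbb{Q}{}$-vector space.

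I do not expect a real obstacle within this corollary: the only point requiring a little care is the passage from ``$M_{\mathrm{div}}$ has a torsion element'' to ``$\mathbb{Z}{}(p^{\infty})\subseteq M$'', which I would handle as above (equivalently, by citing that a divisible group is a direct sum of copies of $\mathbb{Q}{}$ and of Prüfer groups). The substantive difficulty has already been absorbed into Proposition~\ref{z3} — whose proof, for a reduced group with all $M/nM$ finite, must make coherent choices of $n$-th roots, something that works only because the groups $M[n]$ are finite there. Should $TM$ instead denote the torsion subgroup of $M$ itself, the middle paragraph collapses entirely: $M$ is then torsion-free, hence so is $M_{\mathrm{div}}$, and again $U(M)=M_{\mathrm{div}}$ is a $\mathbb{Q}{}$-vector space by Proposition~\ref{z3}.
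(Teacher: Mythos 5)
Your proof is correct and follows the route the paper intends (the paper itself omits the argument, citing the appendix of \cite{milne1986v}): divisibility of $U(M)$ comes from Proposition~\ref{z3}, and torsion-freeness comes from the fact that a nonzero divisible torsion group contains a Pr\"ufer group, whose Tate module is $\mathbb{Z}_{p}\neq 0$, contradicting $TM=0$ (here $TM$ is indeed the Tate module $\varprojlim_{n}M[n]$, consistent with the notation $T_{l}M$ used elsewhere in the paper, so your primary reading is the right one).
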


For an abelian group $M$, we let $M_{l}$ denote the completion of $M$ with
respect to the $l$-adic topology. The quotient maps $M\rightarrow M/l^{n}M$
induce an isomorphism $M_{l}\rightarrow\varprojlim_{n}M/l^{n}M$. The kernel of
the map $M\rightarrow M_{l}$ is $\bigcap\nolimits_{n}l^{n}M$.

\begin{lemma}
\label{z5}Let $N$ be a torsion-free abelian group. If $N/lN$ is finite, then
the $l$-adic completion of $N$ is a free finitely generated $\mathbb{Z}{}_{l}$-module.
\end{lemma}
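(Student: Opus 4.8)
The plan is to realise $N_l$ as the $l$-adic completion of a finitely generated free subgroup of $N$, by exhibiting a subgroup whose inclusion is an isomorphism modulo every power of $l$.

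Put $d=\dim_{\mathbb{F}_l}(N/lN)$, choose $x_1,\dots,x_d\in N$ whose images form an $\mathbb{F}_l$-basis of $N/lN$, and set $M=\mathbb{Z}x_1+\cdots+\mathbb{Z}x_d\subseteq N$. Since $N$ is torsion-free, $M$ is a finitely generated torsion-free abelian group, hence free of some rank $m\le d$. By the choice of the $x_i$ we have $M+lN=N$; iterating this relation via $l^nN=l^n(M+lN)=l^nM+l^{n+1}N$ gives $M+l^nN=N$ for all $n\ge1$ by induction on $n$. In particular $M/lM$ surjects onto $N/lN$, so $m\ge d$; hence $m=d$, the elements $x_1,\dots,x_d$ form a $\mathbb{Z}$-basis of $M$ (a surjection $\mathbb{Z}^d\to\mathbb{Z}^d$ is an isomorphism), and the map $M/lM\to N/lN$ is an isomorphism, i.e.\ $M\cap lN=lM$.

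Next I would prove the purity relation $M\cap l^nN=l^nM$ for all $n\ge1$ by induction, using only that $N$ is torsion-free. The inclusion $l^nM\subseteq M\cap l^nN$ is clear; conversely, given $y\in M\cap l^{n+1}N$, write $y=l^{n+1}z$ with $z\in N$, and use the inductive hypothesis to write $y=l^nw$ with $w\in M$. Then $l^n(w-lz)=0$, so $w=lz$ as $N$ is torsion-free, whence $w\in M\cap lN=lM$ and therefore $y\in l^{n+1}M$. Combining $M+l^nN=N$ with $M\cap l^nN=l^nM$ shows that the inclusion $M\hookrightarrow N$ induces isomorphisms $M/l^nM\xrightarrow{\ \sim\ }N/l^nN$ compatible with the transition maps; passing to the inverse limit gives $N_l\simeq M_l\simeq\mathbb{Z}_l^d$, a free $\mathbb{Z}_l$-module of finite rank, as required.

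The whole argument is elementary; the only genuine subtlety is that $N$ is not assumed finitely generated, so one cannot invoke Nakayama's lemma nor a standard property of $l$-adic completions of finitely generated modules. This is precisely why one passes to the subgroup $M$ and verifies the two relations $M+l^nN=N$ and $M\cap l^nN=l^nM$ directly; granting these, the reduction to $M\cong\mathbb{Z}^d$ and hence to $M_l\cong\mathbb{Z}_l^d$ is immediate.
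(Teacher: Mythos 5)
Your proof is correct and complete; the paper itself omits the argument (the lemma is quoted from the appendix to \cite{milne1986v}), and your route --- lifting an $\mathbb{F}_l$-basis of $N/lN$ to a free subgroup $M\simeq\mathbb{Z}^d$ that is cofinal ($M+l^nN=N$) and pure ($M\cap l^nN=l^nM$), then identifying $\varprojlim_n N/l^nN$ with $\varprojlim_n M/l^nM\simeq\mathbb{Z}_l^d$ --- is exactly the standard one. You use the torsion-freeness of $N$ precisely where it is needed, in the inductive step of the purity claim, and you correctly avoid Nakayama, which is unavailable since $N$ is not assumed finitely generated.
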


\begin{proposition}
\label{z6}Let $\phi\colon M\times N\rightarrow\mathbb{Z}{}$ be a bi-additive
pairing of abelian groups whose extension $\phi_{l}\colon M_{l}\times
N_{l}\rightarrow\mathbb{Z}{}_{l}$ to the $l$-adic completions has trivial left
kernel. If $N/lN$ is finite and $\bigcap\nolimits_{n}l^{n}M=0$, then $M$ is
free and finitely generated.
\end{proposition}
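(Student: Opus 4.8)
The plan is to use the hypotheses to construct an injection of $M$ into a finitely generated free abelian group $\mathbb{Z}^{r}$; since a subgroup of such a group is again free of finite rank, that suffices. The point is that the only nondegeneracy we are handed lives over $\mathbb{Z}_{l}$, so the real work is to turn it into a statement valued in $\mathbb{Z}$.

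First I would sort out the $l$-adic side. Finiteness of $N/lN$ forces $N/l^{n}N$ to be finite for all $n$ (induct on $n$ using $0\to N/l^{n-1}N\xrightarrow{l}N/l^{n}N\to N/lN\to 0$), so $N_{l}=\varprojlim_{n}N/l^{n}N$ is a finitely generated $\mathbb{Z}_{l}$-module and $D:=\Hom_{\mathbb{Z}_{l}}(N_{l},\mathbb{Z}_{l})$ is free of finite rank over $\mathbb{Z}_{l}$. Being $\mathbb{Z}_{l}$-bilinear, $\phi_{l}$ induces a $\mathbb{Z}_{l}$-linear map $M_{l}\to D$, which is injective because $\phi_{l}$ has trivial left kernel; hence $M_{l}$ is a $\mathbb{Z}_{l}$-submodule of a finite free module, so $M_{l}$ is itself free of some finite rank $r$, in particular torsion-free and $l$-adically separated.

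Next I would choose test vectors in $N_{l}$ and then perturb them into $N$. Tensoring $M_{l}\hookrightarrow D$ with $\mathbb{Q}_{l}$ presents $M_{l}\otimes\mathbb{Q}_{l}$ as an $r$-dimensional subspace $W\subseteq(N_{l}\otimes\mathbb{Q}_{l})^{\vee}$; since the evaluation map $N_{l}\otimes\mathbb{Q}_{l}\to W^{\vee}$ is surjective (it is the transpose of an injection), I may pick $\nu_{1},\dots,\nu_{r}\in N_{l}$ whose images form a basis of $W^{\vee}$, so that $\Xi\colon M_{l}\to\mathbb{Z}_{l}^{r}$, $m\mapsto(\phi_{l}(m,\nu_{i}))_{i}$, is injective (an isomorphism after $\otimes\mathbb{Q}_{l}$, and $M_{l}$ is torsion-free) with finite cokernel, say killed by $l^{c}$, so that $l^{c}\mathbb{Z}_{l}^{r}\subseteq\Xi(M_{l})$. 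Using that $N$ is dense in $N_{l}$, I then choose $n_{1},\dots,n_{r}\in N$ with $n_{i}\equiv\nu_{i}\pmod{l^{c+1}N_{l}}$; then $\Lambda_{l}\colon M_{l}\to\mathbb{Z}_{l}^{r}$, $m\mapsto(\phi_{l}(m,n_{i}))_{i}$, differs from $\Xi$ by $l^{c+1}$ times a $\mathbb{Z}_{l}$-linear map, and is still injective: if $\Lambda_{l}(m)=0$ then $\Xi(m)\in l^{c+1}\mathbb{Z}_{l}^{r}\subseteq l\,\Xi(M_{l})=\Xi(lM_{l})$, hence $m\in lM_{l}$; writing $m=lm'$ gives $\Lambda_{l}(m')=0$ by torsion-freeness of $\mathbb{Z}_{l}^{r}$, and iterating puts $m$ in $\bigcap_{k}l^{k}M_{l}=0$. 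Finally $\bigcap_{n}l^{n}M=0$ makes $M\to M_{l}$ injective, and since $\phi_{l}$ restricts to $\phi$ on $M\times N$, for $m\in M$ the tuple $\Lambda_{l}(m)=(\phi(m,n_{i}))_{i}$ already lies in $\mathbb{Z}^{r}$; composing $M\hookrightarrow M_{l}\xrightarrow{\Lambda_{l}}\mathbb{Z}_{l}^{r}$ therefore yields an injection $M\hookrightarrow\mathbb{Z}^{r}$, so $M$ is free and finitely generated.

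The hard part is the third paragraph: passing from the $\mathbb{Z}_{l}$-valued functionals $\nu_{i}\in N_{l}$ to genuine elements $n_{i}\in N$ and checking that a sufficiently accurate approximation preserves injectivity. One cannot simply embed $M$ into $\Hom(N,\mathbb{Z})$ and stop there, since that group need not be finitely generated; the perturbation step is the only place where it is used that $\phi$ takes values in $\mathbb{Z}$ rather than $\mathbb{Z}_{l}$, and indeed the conclusion fails for $\mathbb{Z}_{l}$-valued pairings, as $\phi(x,y)=xy$ on $M=N=\mathbb{Z}_{l}$ shows. Everything else is routine bookkeeping with completions and elementary divisors.
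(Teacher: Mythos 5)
Your proof is correct; the paper itself gives no argument for this proposition (it is quoted from the appendix to \cite{milne1986v}), so the comparison is with the standard argument there, which reaches the same injection $M\hookrightarrow\mathbb{Z}^{s}$ by a shorter route. The difference is in how the test elements of $N$ are produced. You first build a rational dual basis inside $N_{l}$ and then perturb it back into $N$, which forces the $l^{c}$-bookkeeping of your third paragraph. The standard shortcut avoids this entirely: since $N\rightarrow N_{l}/lN_{l}$ is surjective ($N$ is dense and the target is finite and discrete), one can choose $y_{1},\ldots,y_{s}\in N$ lifting generators of $N_{l}/lN_{l}$; by topological Nakayama these generate $N_{l}$ as a $\mathbb{Z}_{l}$-module. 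If $\phi(m,y_{i})=0$ for all $i$, then the image $\bar{m}$ of $m$ in $M_{l}$ satisfies $\phi_{l}(\bar{m},-)=0$ on all of $N_{l}$ by $\mathbb{Z}_{l}$-linearity, so $\bar{m}=0$ by the trivial left kernel, and then $m\in\bigcap_{n}l^{n}M=0$. This gives the injection $M\hookrightarrow\mathbb{Z}^{s}$ directly, with no need for the freeness of $M_{l}$, the passage to $\mathbb{Q}_{l}$-duals, or the approximation estimate. Your observation that one cannot stop at $M\hookrightarrow\Hom(N,\mathbb{Z})$ is right, and your example showing the integrality of the values of $\phi$ is essential is a nice touch; what your write-up buys in exchange for its extra length is an explicit identification of the rank ($r=\rank_{\mathbb{Z}_{l}}M_{l}$), which the shorter argument only recovers a posteriori.
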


\subsubsection{Proof of theorem \ref{d9}}

Let $P\in\ob\mathsf{DM}(k)$. We use the notations (\ref{eq27}). From the
factorization%
\[
\begin{tikzcd}
\mathsf{DM}(k)\arrow{r}\arrow[bend left=20]{rr}{R\Hom(\1,-)}
&\mathsf{D}^{+}(\mathbb{Z}\Gamma_{0})\arrow{r}{RF}
&\mathsf{D}^{+}(\mathbb{Z}),
\end{tikzcd}
\]
we get a spectral sequence%
\[
H^{i}(\Gamma_{0},H_{\mathrm{abs}}^{j}(\bar{P},r))\implies H_{\mathrm{abs}%
}^{i+j}(P,r),
\]
and hence exact sequences%
\[
0\rightarrow H_{\mathrm{abs}}^{i-1}(\bar{P},r)_{\Gamma_{0}}\rightarrow
H_{\mathrm{abs}}^{i}(P,r)\rightarrow H_{\mathrm{abs}}^{i}(\bar{P}%
,r)^{\Gamma_{0}}\rightarrow0.
\]

On applying the rigidity theorem to the exact cohomology sequence of the
distinguished triangle%
\[
P\overset{l^{n}}{\longrightarrow}P\longrightarrow P^{(l^{n})}\longrightarrow
P[1]
\]
(see the Notations), we get an exact sequence%
\begin{equation}
0\rightarrow H_{\mathrm{abs}}^{i}(P,r)^{(l^{n})}\rightarrow H^{i}%
(P,(\mathbb{Z}{}/l^{n}\mathbb{Z}{})(r))\rightarrow H_{\mathrm{abs}}%
^{i+1}(P,r)_{l^{n}}\rightarrow0. \label{eq21}%
\end{equation}
Here the subscript $l^{n}$ denotes the kernel of multiplication by $l^{n}$. We
deduce that $H_{\mathrm{abs}}^{i}(P,r)^{(l^{n})}$ and $H_{\mathrm{abs}}%
^{i+1}(P,r)_{l^{n}}$ are finite for all $i$ and $n$. On passing to the inverse
limit, we get an exact sequence%
\[
0\rightarrow H_{\mathrm{abs}}^{i}(P,r)_{l}\rightarrow H^{i}%
(P,\mathbb{\mathbb{Z}{}}_{l}(r)\mathbb{)}{}\rightarrow T_{l}H_{\mathrm{abs}%
}^{i+1}(P,r)\rightarrow0.
\]

We now apply $T_{l}^{i}(P,r)$: the map%
\[
{}H_{\mathrm{abs}}^{i}(P,r)\otimes\mathbb{Q}_{l}\rightarrow H^{i}(\bar
{P},\mathbb{Q}{}_{l}(r))^{\Gamma}%
\]
is surjective. This implies that the cokernel of the map%
\[
H_{\mathrm{abs}}^{i}(P,r)\otimes_{\mathbb{Z}{}}\mathbb{Z}_{l}\rightarrow
H{}^{i}(P,\mathbb{Z}_{l}(r)){}%
\]
is torsion. As the map factors through $H_{\mathrm{abs}}^{i}(P,r)_{l}$, it
follows that $T_{l}H_{\mathrm{abs}}^{i+1}(P,r)=0$ and
\[
H_{\mathrm{abs}}^{i}(P,r)_{l}\simeq H^{i}(P,\mathbb{Z}_{l}{}(r)).
\]
Consider the diagram%
\[
\begin{tikzcd}
H^{i}_{\text{abs}}(P,r)_l
\arrow{r}{\simeq}\arrow{d}{(d^i)_l}
&H^{i}(P,\mathbb{Z}_l(r))\arrow{d}{\delta^i}\\
H^{i+1}_{\text{abs}}(P,r)_l\arrow{r}
&H^{i+1}(P,\mathbb{Z}_l(r)).
\end{tikzcd}
\]
Here the vertical maps are the differentials in $H_{\mathrm{abs}}^{\bullet
}(P,r)$ (see p.\pageref{a2}) and its $l$-analogue. As $\delta^{i}$ has finite
cokernel, so does the bottom arrow, and so $T_{l}H_{\mathrm{abs}}%
^{2i+2}(P,r)=0$. We have now shown that
\[
T_{l}H_{\mathrm{abs}}^{i}(P,r)=0\text{ for all }i
\]
and so
\[
\left\{  \renewcommand{\arraystretch}{1.5}
\begin{array}
[c]{l}%
H_{\mathrm{abs}}^{i}(P,r)_{l}\simeq H^{i}(P,\mathbb{Z}_{l}(r))\\
U(H_{\mathrm{abs}}^{i}(P,r))\text{ is uniquely }l\text{-divisible}%
\end{array}
\right.  \text{ for all }i.
\]
We have now proved (a) of the theorem, and we have proved (b) and (c) except
that each group $H_{\mathrm{abs}}^{i}(P,r)^{\prime}$ has been replaced by its
$l$-adic completion. It remains to prove that $H_{\mathrm{abs}}^{i}%
(P,r)^{\prime}$ is finitely generated for all $i$ (for then $H_{\mathrm{abs}%
}^{i}(P,r)_{l}\simeq H_{\mathrm{abs}}^{i}(P,r)^{\prime}\otimes\mathbb{Z}{}%
_{l}$).

The maps $H_{\mathrm{abs}}^{i}(P,\mathbb{Z}{}(r))^{\prime}\rightarrow
H_{\mathrm{abs}}^{i}(P,\mathbb{Z}{}(r))_{l}$ are injective, and so
$H_{\mathrm{abs}}^{i}(P,\mathbb{Z}{}(r))^{\prime}$ is finite unless $1$ is an
eigenvalue of the Frobenius map on $H^{i}(\bar{P},\mathbb{Z}{}_{l}(r))$ or
$H^{i-1}(\bar{P},\mathbb{Z}{}_{l}(r))$.

We next show that the group $H_{\mathrm{abs}}^{i}(P,\mathbb{Z}{}(r))^{\prime}$
is finitely generated. There is a commutative diagram%
\[
\begin{tikzpicture}[text height=1.5ex, text depth=0.25ex]
\node (a) at (0,0) {$H^{i}_{\text{abs}}(P,r)^{\prime}/\mathrm{tors}$};
\node (t) at (2.2,0) {$\times$};
\node (b) at (5,0) {$H^{2d-i+1}_{\text{abs}}(P,d-r)^{\prime}/\mathrm{tors}$};
\node (c) at (9,0) {$\mathbb{Z}$};
\node (d) [below=of a] {$H^{i}(P,\mathbb{Z}_{l}{}(r))/\mathrm{tors}$};
\node [below=of t] {$\times$};
\node (e) [below=of b] {$H^{2d-i+1}(P,\mathbb{Z}_{l}{}(d-r))/\mathrm{tors}$};
\node (f) [below=of c] {$\mathbb{Z}_{l}$};
\draw[->,font=\scriptsize,>=angle 90]
(a) edge (d)
(b) edge (e)
(c) edge (f)
(b) edge (c)
(e) edge (f);
\end{tikzpicture}
\]
to which we wish to apply (\ref{z6}). The bottom pairing is nondegenerate,
$U(H_{\text{abs}}^{i}(P,r)^{\prime})=0$ (the quotient of a group by its first
Ulm group has trivial first Ulm group), and the group $H_{\mathrm{abs}%
}^{2d-i+1}(P,\mathbb{Z}{}(d-r))^{(l)}$ is finite. \noindent Therefore
(\ref{z6}) shows that $H_{\text{abs}}^{i}(P,\mathbb{Z}{}(r))^{\prime
}/\mathrm{tors}$ is finitely generated. Because $U(H_{\text{abs}}%
^{i}(P,\mathbb{Z}(r))^{\prime})=0$, the torsion subgroup of $H_{\mathrm{abs}%
}^{i}(P,\mathbb{Z}{}(r))^{\prime}$ injects into the torsion subgroup of
$H^{i}(P,\mathbb{Z}_{l}(r))$, which is finite. Hence $H_{\mathrm{abs}}%
^{i}(P,r)^{\prime}$ is finitely generated modulo prime-to-$l$ torsion.

\subsection{The bijectivity of $r_{l}$\label{f}}

Let $k=\mathbb{F}{}_{q}$. Let $P$ be an object of $\mathsf{DM}(k)$ such that
the Tate conjecture is true for $P$ and all $i,r,l$. According to Theorem
\ref{d9}, the map%
\[
\left(  H_{\mathrm{abs}}^{i}(P,r)/U^{i}\right)  \otimes_{\mathbb{Z}{}%
}\mathbb{Z}{}_{l}\rightarrow H_{\mathrm{et}}^{i}(P,\mathbb{Z}{}_{l}(r))
\]
is an isomorphism for all $i,r,l$. Here $U^{i}$ is the first Ulm subgroup of
$H_{\mathrm{abs}}^{i}(P,r)$, which is uniquely divisible (hence a
$\mathbb{Q}{}$-vector space). Moreover, $H_{\mathrm{abs}}^{i}(P,r)^{\prime
}\overset{\textup{{\tiny def}}}{=}H_{\mathrm{abs}}^{i}(P,r)/U^{i}$ is finitely
generated and its $l$-primary subgroup is finite for all $l$. Therefore the
group $\mathrm{Tor}_{1}^{\mathbb{Z}{}}(H_{\mathrm{abs}}^{i}(P,r)^{\prime
},\mathbb{Z}{}_{l})$ is torsion, and so the sequence%
\[
0\rightarrow U^{i}\otimes\mathbb{Z}{}_{l}\rightarrow H_{\mathrm{abs}}%
^{i}(P,r)\otimes\mathbb{Z}{}_{l}\rightarrow H_{\mathrm{abs}}^{i}(P,r)^{\prime
}\rightarrow0
\]
is exact (because $\mathbb{Q}{}\otimes\mathbb{Z}{}_{l}\simeq\mathbb{Q}{}_{l}$
is torsion-free). To show that $U^{i}=0$, it suffices to show that
\begin{equation}
r_{l}\colon(\1,P)\colon H_{\mathrm{abs}}^{i}(P,r)\otimes\mathbb{Z}{}%
_{l}\rightarrow H_{\mathrm{et}}^{i}(P,\mathbb{Z}{}_{l}(r)) \label{eq25}%
\end{equation}
is injective \textit{for a single }$l$.

When $P$ is the motivic complex with compact support of an algebraic variety
$X$, we write $X$ for $P$.

\begin{lemma}
\label{d10}Suppose that the map%
\begin{equation}
r_{l}(\1,X)\colon H_{\mathrm{abs}}^{\ast}(X,r)\otimes\mathbb{Z}{}%
_{l}\rightarrow H^{\ast}(X,\mathbb{Z}{}_{l}(r)) \label{eq26}%
\end{equation}
is bijective for almost all primes $l$ when $X$ is smooth and projective. Then
this is true for all varieties $X$.
\end{lemma}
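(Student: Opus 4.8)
The plan is to argue by d\'evissage, using de Jong's alteration theorem to reduce an arbitrary variety to smooth projective ones and then propagating bijectivity along distinguished triangles. Write $c_X\colon H_{\mathrm{abs}}^{\bullet}(X,r)\otimes\mathbb{Z}_l\to H^{\bullet}(X,\mathbb{Z}_l(r))$ for the comparison map $r_l(\1,X)$, and call a variety $X$ \emph{good} if $c_X$ is bijective for almost all $l$; we must show every variety is good, granting that every smooth projective variety is. There are two formal reductions. First, suppose $M_c(X_1)\to M_c(X_2)\to M_c(X_3)\to M_c(X_1)[1]$ is a distinguished triangle in $\mathsf{DM}(k)$ that each realization functor $r_l$ carries to a distinguished triangle. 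Applying $\Hom_{\mathsf{DM}(k)}(\1,-(r)[\bullet])$ yields a long exact sequence of absolute cohomology groups, which stays exact after $-\otimes\mathbb{Z}_l$ since $\mathbb{Z}_l$ is flat over $\mathbb{Z}$; applying $\Hom(\1,r_l(-)(r)[\bullet])$ and using exactness of $r_l$ yields the corresponding sequence on the target side, and the maps $c$ furnish a morphism between the two. The five lemma then shows that if two of $X_1,X_2,X_3$ are good, so is the third. Second, suppose $\pi\colon X'\to X$ is proper, surjective, generically finite and \'etale of degree $d$; as in \ref{c6} there are maps $M_c(X)\to M_c(X')\to M_c(X)$ in $\mathsf{DM}(k)$, compatible with the $r_l$, whose composite is multiplication by $d$. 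For $l\nmid d$ the scalar $d$ is a unit in $\mathbb{Z}_l$, so applying $\Hom(\1,-(r)[\bullet])$ and then $-\otimes\mathbb{Z}_l$ realizes $H_{\mathrm{abs}}^{\bullet}(X,r)\otimes\mathbb{Z}_l$ as a direct summand of $H_{\mathrm{abs}}^{\bullet}(X',r)\otimes\mathbb{Z}_l$, and likewise $l$-adically; since $c_{X'}$ is compatible with the pushforward and pullback maps it respects both summand decompositions, so $c_{X'}$ bijective forces $c_X$ bijective. Hence $X'$ good implies $X$ good, at the cost of the finitely many primes dividing $d$.

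With these in hand the argument is short. Given an arbitrary variety $X$, de Jong's alteration theorem (as used in \ref{c7}) provides an alteration $\pi\colon X'\to X$, which we may take proper, surjective, generically finite and \'etale, for which $X'$ is an open subvariety $U$ of a smooth projective variety $\bar X$ with complement a strict normal crossings divisor $D=\bar X\setminus U$. By the transfer reduction it suffices to prove $U$ good. Now $\bar X$ is smooth projective, hence good by hypothesis. Each closed stratum $D_J=\bigcap_{i\in J}D_i$ of $D$ is a smooth closed subvariety of the projective variety $\bar X$, hence is itself smooth projective and therefore good; since $M_c(D)$ is an iterated cone of the $M_c(D_J)$ via the Mayer--Vietoris triangles attached to the closed cover $D=\bigcup_i D_i$, repeated use of the first reduction shows $D$ is good. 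Finally, the localization triangle $M_c(U)\to M_c(\bar X)\to M_c(D)\to M_c(U)[1]$ together with the first reduction shows $U$ is good. Only finitely many primes are lost at each of the finitely many steps, so bijectivity holds for almost all $l$, as required.

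The one substantive input beyond this formal bookkeeping — and the step I expect to be the main obstacle — is knowing that the geometric constructions invoked above genuinely live inside $\mathsf{DM}(k)$ and are compatible with \emph{all} the realization functors: that $\mathsf{DM}(k)$ carries motivic complexes with compact support equipped with localization triangles, Mayer--Vietoris triangles for closed covers, proper pushforward, and transfers along generically-finite-\'etale alterations, and that each $r_l$ sends these to the analogous structures on $\mathsf{D}_c^b(k,\mathbb{Z}_l)$ or $\mathsf{D}_c^b(R)$. For $\ell\neq p$ this is classical for Voevodsky's category together with the \'etale six-functor formalism; for the $p$-adic realization $r_p$ it has to be extracted from the construction of the enhanced category in \S 7. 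Since only cofinitely many $l$ are needed, one could in fact sidestep $p$ altogether and work only with $\ell\neq p$.
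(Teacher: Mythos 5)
Your proof is correct and follows essentially the same route as the paper's: transfer along a generically finite \'etale alteration to discard the primes dividing the degree, de Jong's theorem to reduce to the complement of a strict normal crossings divisor $D$ in a smooth projective variety, and the localization triangle plus the five lemma. The only (harmless) variation is that you handle $D$ directly by Mayer--Vietoris over its smooth projective strata, where the paper instead inducts on dimension; your closing caveat about the six-functor compatibilities required of the realization functors is precisely the point the paper leaves implicit.
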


\begin{proof}
From an alteration $\pi\colon X^{\prime}\rightarrow X$ we get a commutative
diagram%
\[
\begin{tikzcd}
H_{\mathrm{abs}}^{\ast}(X,r)\otimes\mathbb{Z}_{l}
\arrow{r}{\pi^{\ast}}\arrow{d}
&H_{\mathrm{abs}}^{\ast}(X^{\prime},r)\otimes\mathbb{Z}{}_{l}%
\arrow{r}{\pi_{\ast}}\arrow{d}
&H_{\mathrm{abs}}^{\ast}(X,r)\otimes\mathbb{Z}_{l}\arrow{d}\\
H_{\mathrm{et}}^{\ast}(X,\mathbb{Z}{}_{l}(r))
\arrow{r}{\pi^{\ast}}
&H_{\mathrm{et}}^{\ast}(X,\mathbb{Z}{}_{l}(r))
\arrow{r}{\pi_{\ast}}
&H_{\mathrm{et}}^{\ast}(X,\mathbb{Z}{}_{l}(r)).
\end{tikzcd}
\]
in which both composites $\pi_{\ast}\circ\pi^{\ast}$ are multiplication by
$\deg(\pi)$, and are therefore isomorphisms for almost all primes $l$.
Therefore the statement is true for $X$ if it is true for $X^{\prime}$. Now de
Jong's alteration theorem (\cite{dejong1996}) allow us to suppose that $X$ is
smooth and is the complement of a divisor $D$ with strict normal crossings in
a smooth projective variety $Y$. Induction on the dimension of $X$ allows us
to assume the statement for $D$, and a five-lemma argument using the exact
cohomology sequence%
\[
\cdots\rightarrow H_{c}^{i}(X)\rightarrow H^{i}(Y)\rightarrow H^{i}%
(D)\rightarrow
\]
proves it for $X$.
\end{proof}

\begin{aside}
\label{d10a}Fix a prime $l\neq p$. Gabber's improvement of de Jong's theorem
(see \cite{bondarko2011}, 1.2.1) allows one to assume in the above proof that
the degree of $\pi$ is prime to $l$. Therefore we obtain the following
stronger result: if $r_{l}(\1,X)$ is an isomorphism for all smooth projective
varieties, then it is an isomorphism for all varieties (fixed $l\neq p$). Of
course, with resolution, this would be true also for $l=p$.
\end{aside}

\begin{lemma}
\label{d11}Let $X$ be a smooth projective variety over $\mathbb{F}{}_{q}$ that
satisfies the Tate conjecture. If the ideal of $l$-homologically trivial
correspondences in $\mathrm{CH}^{\dim X}(X\times X)_{\mathbb{Q}{}}$ is nil,
then $r_{l}(\1,X)$ is bijective.
\end{lemma}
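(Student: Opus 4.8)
The plan is to derive the lemma from Theorem~\ref{d9} by using the nilpotence hypothesis to break the motive of $X$ into pieces on which the comparison map is manifestly an isomorphism. By Theorem~\ref{d9} together with the reduction recorded before Lemma~\ref{d10}, it suffices to show that the first Ulm subgroup $U^{i}$ of $H_{\mathrm{abs}}^{i}(X,r)$ is zero for all $i$ and $r$. As $U^{i}$ is a $\mathbb{Q}$-subspace of $H_{\mathrm{abs}}^{i}(X,r)_{\mathbb{Q}}$ on which $r_{l}(\1,X)\otimes\mathbb{Q}$ vanishes, it is enough that $r_{l}(\1,X)\otimes\mathbb{Q}\colon H_{\mathrm{abs}}^{i}(X,r)_{\mathbb{Q}}\to H^{i}(X,\mathbb{Q}_{l}(r))$ be injective for a single prime $l$ --- equivalently, in the form of Example~\ref{a4a}, that $H_{\mathrm{abs}}^{i}(X,r)$ be finitely generated, a divisible subgroup of a finitely generated group being trivial.

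Put $d=\dim X$ and let $\mathcal{A}=\mathrm{CH}^{d}(X\times X)_{\mathbb{Q}}$, a finite-dimensional $\mathbb{Q}$-algebra which, via transposition and composition of correspondences, acts on $M(X)$ in $\mathsf{DM}(k)_{\mathbb{Q}}$ --- hence on $H_{\mathrm{abs}}^{\bullet}(X,\bullet)_{\mathbb{Q}}$ and on $H^{\bullet}(X,\mathbb{Q}_{l}(\bullet))$ --- compatibly with the realizations, the diagonal acting as the identity. Let $I_{l}\subseteq\mathcal{A}$ be the ideal of $l$-homologically trivial correspondences, a nilideal by hypothesis. In $\mathcal{A}/I_{l}$ we have the K\"{u}nneth projectors $\pi^{j}$ (polynomials with rational coefficients in the class of the Frobenius correspondence: by the Weil estimates the weight-$j$ parts are the generalized eigenspaces attached to disjoint Galois-stable sets of eigenvalues, so Lagrange interpolation applies); and, granted the Tate conjecture for $X$ (conditions $T$ and $E$ of Theorem~\ref{c10a}, that is, nondegeneracy of the cycle pairing), for each $r$ an idempotent projecting the weight-$2r$ part onto the span of the classes of codimension-$r$ cycles. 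Since $I_{l}$ is a nilideal, idempotents --- and finite orthogonal families of idempotents --- lift modulo $I_{l}$; passing if need be to the Karoubian envelope of $\mathsf{DM}(k)_{\mathbb{Q}}$, we obtain a Chow--K\"{u}nneth decomposition $M(X)_{\mathbb{Q}}\simeq\bigoplus_{j}h^{j}(X)$, refined for each $r$ to $h^{2r}(X)\simeq\bigl(\bigoplus\1(-r)[2r]\bigr)\oplus N_{r}$ in which the realization of $N_{r}$ --- like that of $h^{j}(X)$ for every $j\neq 2r$ --- has no Frobenius eigenvalue $q^{r}$. This makes precise the distinguished triangle anticipated in \ref{a8a}.

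Since $r_{l}(\1,-)$ is additive it suffices to treat the summands. On each copy of $\1(-r)[2r]$ the map $r_{l}$ is an isomorphism, by the computation of \ref{a8a}. On a summand $N$ whose realization has no Frobenius eigenvalue $q^{r}$, the operators $1-\gamma$ that govern $H_{\mathrm{abs}}^{\bullet}(N,r)$ and $H^{\bullet}(N,\mathbb{Q}_{l}(r))$ through the $\Gamma_{0}$-spectral sequence are rationally invertible in every degree, so $H^{i}(N,\mathbb{Q}_{l}(r))=0$ for all $i$; granted the Parshin-type vanishing discussed next, $H_{\mathrm{abs}}^{i}(N,r)=0$ as well, so this summand contributes nothing. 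Hence $r_{l}(\1,X)\otimes\mathbb{Q}$ is injective, whence $U^{i}=0$ for all $i$; this is the lemma, and Lemma~\ref{d10} then extends the conclusion to arbitrary varieties.

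The main obstacle is the decomposition in the second paragraph --- upgrading the K\"{u}nneth and algebraic-class projectors, which a priori exist only modulo homological equivalence, to genuine idempotent endomorphisms of $M(X)$, that is, to an actual direct-sum decomposition in $\mathsf{DM}(k)_{\mathbb{Q}}$. This is precisely the step that consumes the nilpotence of $I_{l}$, and it also requires the (harmless) passage to the Karoubian envelope and the fact that the Tate conjecture descends to the summand $N$. The residual point --- that $H_{\mathrm{abs}}^{i}(N,r)$ genuinely vanishes rather than merely having finite rank --- is a Parshin-type statement for the geometric motivic cohomology of the non-Tate part; it too is forced by the hypotheses, since nilpotence of $I_{l}$ makes every homologically trivial correspondence, hence (after base change to $\bar{k}$) every homologically trivial cycle, rationally trivial, while the Tate conjecture identifies what survives with algebraic cycle classes. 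This last implication, going back to the analysis in \cite{milne1986v} (and its addendum), is the genuine heart of the matter.
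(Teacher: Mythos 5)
The paper's own ``proof'' of this lemma is only a pointer to the appendix of \cite{milne1986v}, so a detailed reconstruction is welcome, and the architecture of yours is the intended one: reduce, via Theorem \ref{d9} and the discussion at the start of the subsection, to showing that the uniquely divisible subgroups $U^{i}$ vanish; use nilpotence of the ideal $I_{l}$ to lift the K\"unneth projectors and (using $T+E$, hence $S$) the projector onto the span of the algebraic classes in weight $2r$ to genuine idempotents on $M(X)$ in $\mathsf{DM}(k)_{\mathbb{Q}{}}$; and then treat the summands $\1(-r)[2r]$ and the complementary summand $N$ separately. The idempotent-lifting, the algebraicity of the K\"unneth projectors as polynomials in the Frobenius correspondence, and the handling of the $l$-adic side of $N$ are all sound.

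The gap is in your justification of the one step you yourself single out as the heart of the matter, namely $H_{\mathrm{abs}}^{i}(N,r)_{\mathbb{Q}{}}=0$. You argue that nilpotence of $I_{l}$ ``makes every homologically trivial correspondence, hence every homologically trivial cycle, rationally trivial.'' It does not: nilpotence makes such correspondences nilpotent, not zero; the assertion that they are zero is Beilinson's conjecture, which the paper's remark \ref{d8} is careful to distinguish from nilpotence (Beilinson gives ``null,'' Kimura finite-dimensionality gives only ``nil''). Moreover, for $i\neq 2r$ the groups $H_{\mathrm{abs}}^{i}(N,r)$ involve higher Chow groups, about which a statement on cycle classes says nothing. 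The correct mechanism --- and the one in the cited appendix --- is Cayley--Hamilton combined with nilpotence: let $Q(t)\in\mathbb{Q}{}[t]$ be the characteristic polynomial of the Frobenius correspondence $\pi$ acting on the $l$-adic realization of $N(r)$. Then $Q(\pi)$ is $l$-homologically trivial, hence $Q(\pi)^{m}=0$ as an endomorphism of $N$ in $\mathsf{DM}(k)_{\mathbb{Q}{}}$ for some $m$; since $N$ has no Frobenius eigenvalue $q^{r}$, we have $Q(1)\neq 0$, and writing $Q(t)^{m}=Q(1)^{m}-(1-t)R(t)$ with $R\in\mathbb{Q}{}[t]$ shows that $1-\gamma$ acts invertibly on $H_{\mathrm{abs}}^{i}(\bar{N},r)_{\mathbb{Q}{}}$ for every $i$ (no finite-dimensionality needed), so the $\Gamma_{0}$-spectral sequence forces $H_{\mathrm{abs}}^{i}(N,r)_{\mathbb{Q}{}}=0$. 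With that substitution your argument closes; as written, the step that actually consumes the hypothesis in the crucial place is unsupported.
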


\begin{proof}
See, for example, the appendix to \cite{milne1986v}.
\end{proof}

\begin{aside}
\label{d8}For a smooth projective algebraic variety $X$ whose Chow motive is
finite-dimensional, the ideal of $l$-homologically trivial correspondences in
$\mathrm{CH}^{\dim X}(X\times X)_{\mathbb{Q}{}}$ is nil for all prime $l$
(Kimura). It is conjectured (Kimura and O'Sullivan) that the Chow motives of
algebraic varieties are always finite-dimensional, and this is known for those
in the category generated by the motives of abelian varieties. On the other
hand, Beilinson has conjectured that, over finite fields, rational equivalence
with $\mathbb{Q}{}$-coefficients coincides with with numerical equivalence,
which implies that the ideal in question is always null (not merely nil).
\end{aside}

\begin{theorem}
\label{d6}Assume that, for each smooth projective variety $X$ over
$\mathbb{F}{}_{q}$, the Tate conjecture holds for $X$ and, for some prime $l$,
the ideal of $l$-homologically trivial correspondences in $\mathrm{CH}^{\dim
X}(X\times X)_{\mathbb{Q}{}}$ is nil. If $\mathsf{DM}(k)$ is generated as a
triangulated category by the motives of smooth algebraic varieties, then the
maps $r_{l}(M,N)$ are isomorphisms for all $l$ (except possibly $p$).
\end{theorem}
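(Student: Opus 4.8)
The plan is to prove first that $r_l(\1,P)$ is an isomorphism for every $P\in\ob\mathsf{DM}(k)$ and every prime $l\neq p$, and then to deduce the statement for arbitrary $\Ext$-groups. For the second part, set $P=R\underline{\Hom}(M,N)$. By (\ref{eq17}) and its $\ell$-adic analogue (\ref{a25}), together with the compatibility of the realization functors with internal $\Hom$ imposed above on the $r_l$, one has $\Ext^{j}(M,N(r))=H_{\mathrm{abs}}^{j}(P,r)$ and $\Ext^{j}(r_lM,r_lN(r))=H^{j}(P,\mathbb{Z}{}_l(r))$ in the notation of (\ref{eq27}), and under these identifications the map $r_l(M,N)$ is the map $r_l(\1,P)$. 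So the whole statement reduces to the assertion about $\1$.

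The first step is to upgrade the hypothesis ``nil for some prime'' to ``nil for every $l\neq p$''. For a smooth projective $X$ the Tate conjecture includes the condition $E$ of Theorem \ref{c10a}, i.e.\ $N^{j}(X,r)=0$ for all $j,r$; applied to $X\times X$ this says precisely that $l$-homological equivalence agrees with numerical equivalence on $\mathrm{CH}^{\bullet}(X\times X)_{\mathbb{Q}{}}$. Hence the ideal of $l$-homologically trivial correspondences in $\mathrm{CH}^{\dim X}(X\times X)_{\mathbb{Q}{}}$ coincides with the ideal of numerically trivial correspondences, which is independent of $l$. Therefore, if this ideal is nil for a single prime it is nil for all $l\neq p$, and Lemma \ref{d11} then gives that $r_l(\1,X)$ is bijective for every smooth projective $X$ and every $l\neq p$.

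Next, fix $l\neq p$. By Lemma \ref{d10} --- or rather its refinement in Aside \ref{d10a}, which uses Gabber's improvement of de Jong's alteration theorem to produce, for our fixed $l$, an alteration of prime-to-$l$ degree --- the map $r_l(\1,X)$ is bijective for the motivic complex (with compact support) of every algebraic variety $X$. This is the one place where $p$ must be excluded: the prime-to-$l$ alteration theorem is available only for $l\neq p$ in the absence of resolution of singularities in characteristic $p$. Now let $\mathcal{G}_l$ be the class of objects $P$ of $\mathsf{DM}(k)$ for which the realization map $H_{\mathrm{abs}}^{j}(P,r)\otimes\mathbb{Z}{}_l\to H^{j}(P,\mathbb{Z}{}_l(r))$ is an isomorphism for all $j$ and all $r$. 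It contains the objects just described; it is stable under shifts and Tate twists (formal), under passage to direct summands (additivity of $\Hom$), and under the two-out-of-three property for distinguished triangles. For the last point, apply the five lemma to the long exact cohomology sequences of a distinguished triangle in $\mathsf{DM}(k)$ and of its image under the exact functor $r_l$, using that $-\otimes\mathbb{Z}{}_l$ is exact because $\mathbb{Z}{}_l$ is flat over $\mathbb{Z}{}$. Since $\mathsf{DM}(k)$ is by hypothesis generated as a triangulated category by the motives of smooth algebraic varieties, it follows that $\mathcal{G}_l=\mathsf{DM}(k)$; that is, $r_l(\1,P)$ is an isomorphism for all $P$. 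Together with the reduction of the first paragraph, this proves the theorem.

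I expect the genuine difficulty to be bookkeeping rather than anything deep. One must unwind the requirements imposed on the $r_l$ to see that the identification $r_l(M,N)=r_l(\1,R\underline{\Hom}(M,N))$ is compatible with the $\Gamma_{0}$- and $\Gamma$-actions, so that in fact the complexes $E(M,N,r)$ and $E(r_lM,r_lN,r)$ themselves --- not merely the individual groups --- are matched up; and one must invoke the generation hypothesis in a form (a thick triangulated subcategory, and one containing the compact-support motives of non-proper varieties handled by Lemma \ref{d10}) strong enough for the d\'evissage to close up. Both of these are routine given the precise axioms imposed on $\mathsf{DM}(k)$ and on the realization functors, but they are where the care is required.
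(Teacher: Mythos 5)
Your proof is correct and follows essentially the same route as the paper: bijectivity for smooth projective varieties via Lemma \ref{d11} (with the $l$-independence of the nilness coming from the Tate conjecture), extension to all varieties via \ref{d10} and the prime-to-$l$ alterations of \ref{d10a}, d\'evissage over the generating class, and finally the reduction of $r_l(M,N)$ to $r_l(\1,R\underline{\Hom}(M,N))$. The only difference is expository --- you spell out the two-out-of-three and flatness points that the paper leaves implicit.
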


\begin{proof}
The hypotheses imply that $r_{l}(\1,X)$ is an isomorphism for all $l$ if $X$
is smooth and projective (see \ref{d11} and the discussion above). Therefore
$r_{l}(\1,X)$ is an isomorphism for all $l$ (except possibly $p$) if $X$ is
smooth (apply \ref{d10}, \ref{d10a}). If $r_{l}(\1,P)$ is an isomorphism for
two of the terms in a distinguished triangle, then it is an isomorphism for
all three of the terms. Because $\mathsf{DM}(k)$ is generated by the motives
of smooth algebraic varieties, we deduce that $r_{l}(\1,P)$ is an isomorphism
all $l$ (except possibly $p$). This proves the theorem for $(\1,P)$, and it
can be deduced for $(M,N)$ by taking $P=R\underline{\Hom}(M,N)$.
\end{proof}

The exception at $p$ in the theorem can be removed under either of the
following two hypotheses.

(a) \textit{Resolution of singularities. }This allows us to drop the condition
$l\neq p$ in (\ref{d10a}), and hence in the rest of the proof.

(b) \textit{Numerical equivalence coincides with }$l$\textit{-homological
equivalence for }$l=p$\textit{ and at least one other prime }$l_{1}$\textit{
(for smooth projective varieties; $\mathbb{Q}$-coefficients). }Let $X$ be a
smooth variety. If $r_{l}(\1,X)$ is an isomorphism for all $l\neq p$, then the
Tate conjecture holds for $X$ and all $l\neq p$. Under our hypothesis on the
equivalence relations, Condition (e) of (\ref{c10a}) holds for $l_{1}$ if and
only if it holds for $p$ (cf. \cite{katz1994}, p.28). Therefore the Tate
conjecture holds for $X$ and $p$, which in which in turn implies that
$r_{p}(\1,X)$ is an isomorphism (\ref{d9} et seq.). We now know that
$r_{l}(\1,X)$ is an isomorphism for all smooth varieties $X$ and all primes
$l$, and so the proof can be completed as before.

\begin{theorem}
\label{d7}Assume that the Tate and Beilinson conjectures hold for all smooth
projective varieties over $\mathbb{F}{}_{q}$. Then Theorem \ref{d5} holds for
all $M,N$ in $\mathsf{DM}(k)$. If in addition, $\mathsf{\mathsf{DM}%
}(k)_{\mathbb{Q}{}}$ is semisimple, then Conjecture \ref{a1} holds for all
$M,N$ in $\mathsf{DM}(k)$.
\end{theorem}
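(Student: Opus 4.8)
The plan is to reduce the assertion to Theorem \ref{d5}: for each pair $M,N$ in $\mathsf{DM}(k)$ I verify its two hypotheses, namely (i) that $r_{l}(M,N)$ is an isomorphism for every prime $l$ (including $p$), and (ii) that the Frobenius endomorphism of $P=R\underline{\Hom}(M,N)$ is semisimple. Granting (i) and (ii), Theorem \ref{d5} yields statements (a), (b), (c) for $M,N$, and when $\mathsf{DM}(k)_{\mathbb{Q}{}}$ is in addition semisimple its closing clause yields Conjecture \ref{a1} for $M,N$ --- which is precisely what is claimed. Throughout I use that the categories $\mathsf{DM}(k)$ in question (those of \S\S 7, 8) are generated as triangulated categories by the motives of smooth varieties.

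For (i) I invoke Theorem \ref{d6}. The Tate conjecture for all smooth projective $X/\mathbb{F}{}_{q}$ is assumed, and the Beilinson conjecture says that rational and numerical equivalence agree (with $\mathbb{Q}{}$-coefficients) on such $X$. Since $l$-homological equivalence lies between these two relations, it too agrees with numerical equivalence, for every $l$ --- in particular for $l=p$. Hence the ideal of $l$-homologically trivial correspondences in $\mathrm{CH}^{\dim X}(X\times X)_{\mathbb{Q}{}}$ is zero, hence nil (cf.\ Aside \ref{d8}), and the auxiliary hypothesis ``numerical $=$ $l$-homological equivalence for $l=p$ and a second prime'' of the discussion following Theorem \ref{d6} holds. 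Theorem \ref{d6} together with the remarks after it then give that $r_{l}(M,N)$ is an isomorphism for all $l$, with the exception at $p$ removed.

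For (ii) I again use that, by Beilinson, $l$-homological equivalence coincides with numerical equivalence on smooth projective varieties over $\mathbb{F}{}_{q}$ for every $l$; by Jannsen's theorem the associated category of homological motives is then a semisimple $\mathbb{Q}{}$-linear category. Via K\"unneth, Poincar\'e duality, and de Jong's alteration theorem --- together with Deligne's weight formalism, which splits the realizations into pure pieces --- the cohomology groups $H^{j}(r_{\ell}(P))_{\mathbb{Q}{}_{\ell}}$, and likewise the isocrystals $H^{j}(s(r_{p}(P)))_{K}$, are subquotients of direct sums of twists of the cohomology of smooth projective varieties, so it suffices to prove that Frobenius acts semisimply on the realizations of a simple homological motive $M$. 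But $\pi$ is an endomorphism of $\id_{\mathsf{DM}(k)}$, so $\pi_{M}$ commutes with every element of the division algebra $\End(M)$ and therefore lies in its centre, a number field; thus the minimal polynomial $f\in\mathbb{Q}{}[t]$ of $\pi_{M}$ over $\mathbb{Q}{}$ is separable. Applying the realization homomorphism $r_{l}\colon\End(M)\rightarrow\End(r_{l}M)$ to the relation $f(\pi_{M})=0$ gives $f(r_{l}(\pi_{M}))=0$, so $r_{l}(\pi_{M})$ is annihilated by a separable polynomial and hence acts semisimply on every realization of $M$, and so of $P$.

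The main obstacle is step (ii): the surjectivity form of the Tate conjecture does not by itself give semisimplicity of Frobenius, so the Beilinson conjecture must genuinely be used --- through Jannsen's semisimplicity theorem and the centrality of $\pi$, which comes from its being a natural transformation of the identity functor --- to promote the homological category to a semisimple one. One must also carry the prime $l=p$ through every step, which causes no trouble here precisely because Beilinson's coincidence of equivalence relations is independent of the choice of Weil cohomology.
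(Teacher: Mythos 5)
Your proposal is correct, and for hypothesis (i) of Theorem \ref{d5} it follows exactly the route the paper intends by its one-line proof: Beilinson makes the ideal of $l$-homologically trivial correspondences null, so Theorem \ref{d6} applies, and the coincidence of numerical with $l$-homological equivalence for $l=p$ and a second prime removes the exception at $p$ via hypothesis (b) of the discussion following \ref{d6}. Where you genuinely diverge is hypothesis (ii). The paper never explains how the semisimplicity assumption of Theorem \ref{d5} is met; its most plausible implicit route is that the ``Tate conjecture'' in the sense of Theorem \ref{c10a} already contains the statement $S$ (bijectivity of $V_{l}^{j}(P,r)^{\Gamma}\rightarrow V_{l}^{j}(P,r)_{\Gamma}$), i.e.\ semisimplicity of Frobenius at the eigenvalues $q^{r}$, which is all that Propositions \ref{a26} and \ref{b7} actually use. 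You instead derive \emph{full} semisimplicity from Beilinson via Jannsen's theorem and the centrality of $\pi$ in the endomorphism algebra of a simple motive; this is the standard folklore argument (cf.\ the paper's citation of \cite{milne1994}), it is stronger than what is needed, and it makes explicit a step the paper leaves unaddressed. The one point to keep honest in your version is the d\'evissage asserting that the realizations of an arbitrary object of $\mathsf{DM}(k)$ (including the rigid realization at $p$) are Frobenius-equivariant subquotients of sums of twists of the cohomology of smooth projective varieties; this is at the same level of rigor as the paper's own reductions in (\ref{c8}) and (\ref{d6}), but it is where the weight formalism at $p$ is genuinely being invoked.
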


\begin{proof}
Immediate from the above.
\end{proof}

\section{Motivic complexes following Voevodsky}

Let $k$ be a perfect field of characteristic $p>0$, let $W=W(k)$ be the ring
of Witt vectors over $k$, and let $K=W\otimes{\mathbb{Q}}$ be its fraction
field. Let $\bar{k}$ be an algebraic closure of $k$, and let $\Gamma$ the
Galois group of $\bar{k}$ over $k$. Recall that an $F$-isocrystal is a
$K_{\sigma}[F]$-module that is finite-dimensional as a $K$-vector space and
such that $F$ is bijective.

At present, it is not known how to construct (in a natural way, i.e., without
using realizations) a triangulated category of motivic complexes $k$ equipped
with an integral $p$-adic cohomological realization functor to $\mathsf{D}%
_{c}^{b}(R)$. We propose constructing such a category as a dg fibred product
of Voevodsky's category with the category $\mathsf{D}_{c}^{b}(R)$.

In our earlier article \cite{milneR2004}, we constructed the abelian category
of integral mixed motives as a fibred product. Our construction here is
similar in spirit, but takes place at the level of dg-categories.

There are roughly three steps:

\begin{enumerate}
\item definition of a dg crystalline/rigid realization ($k$ algebraically closed);

\item formation of the fibred product category at the dg level and comparison
of the Hom's in the various categories ($k$ algebraically closed);

\item passage to a finite base field and the Weil \'{e}tale cohomology groups.
\end{enumerate}

\subsection*{Various definitions of $\mathsf{DM}^{\text{eff}}_{\text{gm}}(k,
\mathbb{Q})$}

Voevodsky's category $\mathsf{DM}_{\mathrm{et}}^{\mathrm{eff}}%
(k,\mathbb{\mathbb{Z}{}})$ of effective \'{e}tale motives over $k$ has no
$p$-torsion and is $\mathbb{Z}{}[p^{-1}]$-linear. Thus%
\[
\mathsf{DM}_{\mathrm{et}}^{\mathrm{eff}}(k,\mathbb{\mathbb{Z}{}}%
)=\mathsf{DM}_{\mathrm{et}}^{\mathrm{eff}}(k,\mathbb{\mathbb{Z}{}}[p^{-1}]).
\]

With $\mathbb{Q}{}$-coefficients, there is a canonical equivalence of
categories%
\[
\mathsf{DM}_{\mathrm{Nis}}^{\mathrm{eff}}(k,\mathbb{\mathbb{Q}})\rightarrow
\mathsf{DM}_{\mathrm{et}}^{\mathrm{eff}}(k,\mathbb{\mathbb{Q}})
\]
between the triangulated categories of effective Voevodsky motives and
effective \'{e}tale motives with $\mathbb{Q}{}$-coefficients
(\cite{mazzaVW2006}, 14.30, p.118). In fact, the various definitions of the
triangulated category of motivic complexes all give the same answer for
rational coefficients. In particular, the following categories are all
canonically equivalent (\cite{deglise2013}):

\begin{itemize}
\item $\mathsf{DM}_{\mathrm{et}}(k,\mathbb{Q})$ of \'{e}tale motives (Voevodsky)

\item $\mathsf{DM}_{\mathrm{Nis}}(k,\mathbb{Q})$ of Voevodsky sheaves in the
Nisnevich motives (Voevodsky)

\item $\mathsf{DA}_{\mathrm{et}}(k,\mathbb{Q})$ via motivic homotopy (Morel, Ayoub)

\item $\mathsf{DM}_{\mathrm{h}}(k,\mathbb{Q})$ of $h$-topology motives (Voevodsky).
\end{itemize}

The first two involve presheaves with transfers, but the third one does not.
For a definition of $\mathsf{DA}_{\mathrm{et}}^{\mathrm{eff}}(k,\mathbb{Q})$,
see \cite{ayoub2013}, begining of Section 3, just before 3.1. Ayoub (ibid.,
B.1) has provided a canonical equivalence%
\[
\mathsf{DA}_{\mathrm{et}}^{\mathrm{eff}}(k,\mathbb{Q})\rightarrow
\mathsf{DM}_{\mathrm{et}}^{\mathrm{eff}}(k,\mathbb{Q})\text{.}%
\]
For details on the other categories, see \cite{cisinskiD2012b}

The integral versions of these categories, other than the second, are all
$\mathbb{Z}[p^{-1}]$-linear, i.e., they do not have any $p$-torsion. For this
reason, we write $\mathsf{\mathsf{\mathsf{DM}}}_{\mathrm{et}}(k,\mathbb{Z}%
{}[p^{-1}])$ for the integral version of $\mathsf{\mathsf{\mathsf{\mathsf{DM}%
}}}_{\mathrm{et}}(k,\mathbb{Q}{})$. The $p$-part of the integral Nisnevich
category remains mysterious; no connection, as yet, has been established
between the Nisnevich category and the de Rham-Witt complex.

\subsection*{Glossary of categories}

\begin{itemize}
\item $\mathrm{Ho}(\mathcal{C})$ the homotopy category of a dg-category
$\mathcal{C}{}$. When $\mathcal{C}{}$ is pretriangulated, $\mathrm{Ho}%
(\mathcal{C})$ is a triangulated category.

\item $\mathsf{DM}_{\text{et}}^{\text{eff}}(k,\mathbb{\mathbb{Z}{}}[p^{-1}])$
Voevodsky's triangulated category of effective \'{e}tale motives over $k$.

\item $\mathsf{D}_{c}^{b}(R)$ the triangulated category of coherent complexes
of graded modules over the Raynaud ring $R$ (see \S 5).

\item $\mathsf{D}_{c}^{b}(K_{\sigma}[F])$ the bounded derived category of
complexes of $K_{\sigma}[F]$-modules whose cohomology groups are $F$-isocrystals.

\item $\mathcal{DM}_{\text{et}}^{\text{eff}}(k,\mathbb{Z}{}[p^{-1}])$ the
natural dg enhancement of $\mathsf{\mathsf{\mathsf{DM}}}_{\mathrm{et}%
}^{\mathrm{eff}}(k,\mathbb{Z}{}[p^{=1}])$ (\cite{beilinsonV2008}).

\item $\mathcal{D}{}(B)$ the derived dg-category of an exact category $B$;
this is the dg quotient of the dg category $C(B)$ of unbounded complexes by
the subcategory of acyclic ones (\cite{drinfeld2004}, \cite{tabuada2010q}).

\item $\mathcal{D}_{c}^{b}(R)$ the natural dg enhancement of $\mathsf{D}%
_{c}^{b}(R)$.

\item $\mathcal{D}_{c}^{b}(K_{\sigma}[F])$ the natural dg enhancement of
$\mathsf{D}_{c}^{b}(K_{\sigma}[F])$.

\item $\mathcal{DM}(k,\mathbb{Z})$ the dg-category of integral motivic
complexes over $k$ (constructed below).
\end{itemize}

\subsection*{The dg-enhancement of Voevodsky's category}

For every $\mathbb{Q}{}$-algebra $A$, there is a canonical equivalence of
categories
\begin{equation}
\sigma\colon\mathsf{DM}_{\mathrm{Nis}}^{\mathrm{eff}}(k,A)\rightarrow
\mathsf{DM}_{\mathrm{et}}^{\mathrm{eff}}(k,A) \label{eq30}%
\end{equation}
between the triangulated categories of effective Voevodsky motives and
effective \'{e}tale motives with coefficients in $A$ (\cite{mazzaVW2006},
14.30, p.118). Voevodsky's category $\mathsf{DM}_{\text{Nis}}^{\text{eff}%
}(k,\mathbb{Q})$ of effective geometric motives over $k$ admits a crystalline
realization functor (homology) (\cite{cisinskiD2012}). It is a mixed Weil
cohomology theory in the terminology of Cisinski and D\'{e}glise. When
composed with a quasi-inverse of $\sigma$, it gives a crystalline realization
of $\mathsf{DM}_{\mathrm{et}}^{\mathrm{eff}}(k,\mathbb{\mathbb{Q}})$.

We want to lift this to a dg-realization, i.e. a dg-quasifunctor%
\[
\mathcal{DM}_{\text{et}}^{\text{eff}}(k,\mathbb{Z}[p^{-1}])\rightarrow
\mathcal{D}(K_{\sigma}[F])
\]
where $\mathcal{DM}_{\text{et}}^{\text{eff}}(k,\mathbb{Z}[p^{-1}])$ is the
dg-enhancement of $\mathsf{DM}_{\text{et}}^{\text{eff}}(k,\mathbb{Z}[p^{-1}])$
constructed in \cite{beilinsonV2008}.

We shall freely use \cite{beilinsonV2008} and \cite{volog2012}.

In particular, Vologodsky (2012, Theorem 2, p.384; also the start of Section
2) has provided a very convenient criterion for constructing dg-quasifunctors
on $\mathcal{DM}_{\text{et}}^{\text{eff}}(k,\mathbb{Z}[p^{-1}])$. For rational
coefficients, i.e., for quasi-functors on $\mathcal{DM}_{\text{et}%
}^{\text{eff}}(k,\mathbb{Q})$, this criterion states\footnote{Vologodsky
informed us that, by results of Ayoub, the $h$-topology in the criterion can
be replaced by \'{e}tale topology.}

\begin{quote}
A dg-realization $\mathcal{DM}_{\text{et}}^{\text{eff}}(k,\mathbb{Q}%
)\rightarrow\mathcal{C}$ into a cocomplete compactly generated dg-category
$\mathcal{C}$ is just an \textquotedblleft ordinary\textquotedblright\ functor
from the category of smooth connected schemes to $\mathcal{C}$ which is
$\mathbb{A}^{1}$-homotopy invariant and satisfies the descent property for
Voevodsky's $h$-topology.
\end{quote}

\noindent In other words, $\mathcal{DM}_{\text{et}}^{\text{eff}}%
(k,\mathbb{Q})$ is universal for dg-realizations of motives.

The canonical equivalence $\sigma$ admits a dg-enhancement: the canonical
dg-quasifunctor on the dg-enhancements $\mathcal{DM}_{\mathrm{Nis}%
}^{\mathrm{eff}}(k,A)\rightarrow\mathcal{DM}_{\mathrm{et}}^{\mathrm{eff}%
}(k,A)$ is a homotopy equivalence for every $\mathbb{Q}{}$-algebra $A$
(\cite{volog2012}, p.380, following Remark 2.6, ).

\begin{remark}
\label{v1}The construction of the category $\mathsf{DM}_{\mathrm{et}%
}^{\mathrm{eff}}(k,\mathbb{Q}{})$ is based on $\mathbb{A}^{1}$-homotopy
invariant \'{e}tale presheaves with transfer on the category of all smooth
schemes over $k$. The category $\mathsf{DA}_{\mathrm{et}}^{\mathrm{eff}%
}(k,\mathbb{Q}{})$ in Ayoub's theorem is a variant \textquotedblleft sans
transfers\textquotedblright. The equivalence in Ayoub's theorem provides a
variant of Theorem 2.8(c) of \cite{volog2012}. Because of this, one does not
have to check for good properties with respect to transfers in constructing dg-realizations.
\end{remark}

\subsection*{The dg-realization of rigid homology}

We assume that $k$ is algebraically closed. Then the category of
$F$-isocrystals is semisimple, and so every object of $\mathcal{D}{}%
(K_{\sigma}[F])$ is isomorphic to its homology (viewed as a complex with
trivial differentials).

\begin{lemma}
\label{v2}The category $C(K_{\sigma}[F])$ of unbounded complexes of
$K_{\sigma}[F]$-modules is a pre-triangulated cocomplete compactly generated
dg-category over $\mathbb{Q}_{p}$.
\end{lemma}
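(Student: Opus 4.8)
The plan is to verify the four assertions in turn: that $C(K_{\sigma}[F])$ is a dg category over $\mathbb{Q}_{p}$, that it is pre-triangulated, that it is cocomplete, and that it is compactly generated. Three of these are a matter of unwinding definitions together with standard facts about complexes over an additive category; the point that takes a little thought is compact generation, where one must be slightly careful about which homotopy category is in play, and that is the step I expect to be the main (indeed essentially the only) obstacle.

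First, $\sigma$ is the Frobenius automorphism of $W=W(k)$, so it fixes $W(\mathbb{F}_{p})=\mathbb{Z}_{p}$ and hence fixes $\mathbb{Q}_{p}\subset K$ pointwise; therefore $\mathbb{Q}_{p}$ is central in the skew polynomial ring $K_{\sigma}[F]$, the $\Hom$-groups between $K_{\sigma}[F]$-modules are $\mathbb{Q}_{p}$-vector spaces, and the usual degreewise $\Hom^{\bullet}$ of two complexes is a complex of $\mathbb{Q}_{p}$-vector spaces; so $C(K_{\sigma}[F])$ is a dg category over $\mathbb{Q}_{p}$. That it is pre-triangulated is the general fact that the dg category of complexes over any additive category is pre-triangulated: shifts $[n]$ and cones of closed degree-zero morphisms are formed degreewise and remain inside $C(K_{\sigma}[F])$, and $H^{0}(C(K_{\sigma}[F]))$ is the ordinary homotopy category of complexes with its usual triangulation (Bondal and Kapranov; see also \cite{drinfeld2004}). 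For cocompleteness, $K_{\sigma}[F]$-modules form a Grothendieck abelian category, so all small coproducts --- indeed all small colimits --- exist and are computed on underlying modules; taken degreewise they equip $C(K_{\sigma}[F])$ with all small coproducts, compatibly with the dg structure, and these descend to coproducts in $H^{0}(C(K_{\sigma}[F]))$. That supplies the cocompleteness hypothesis needed to apply Vologodsky's criterion quoted above.

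For compact generation I would first pass to the appropriate homotopy category. Here it is useful to record that $K_{\sigma}[F]$, being the skew polynomial ring over the field $K$ attached to the automorphism $\sigma$, is a left and right principal ideal domain, hence left Noetherian of global dimension $1$; consequently its module category is hereditary, every unbounded complex admits an $h$-projective resolution (Spaltenstein), and the dg quotient of $C(K_{\sigma}[F])$ by the acyclic complexes --- the category $\mathcal{D}(K_{\sigma}[F])$ of the glossary --- has underlying triangulated category the unbounded derived category $\mathsf{D}(K_{\sigma}[F])$, with coproducts unchanged. In $\mathsf{D}(K_{\sigma}[F])$ the free module $K_{\sigma}[F]$, regarded as a complex concentrated in degree $0$, is a compact generator: $\Hom(K_{\sigma}[F],M)\cong H^{0}(M)$, and $H^{0}$ commutes with arbitrary direct sums of complexes of modules, so $K_{\sigma}[F]$ is compact; moreover $\Hom(K_{\sigma}[F][n],M)\cong H^{-n}(M)$, so an $M$ annihilated by all of these is acyclic, hence zero in $\mathsf{D}(K_{\sigma}[F])$, whence $\{K_{\sigma}[F][n]\}_{n\in\mathbb{Z}}$ --- equivalently the single object $K_{\sigma}[F]$ --- generates (Neeman). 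Thus $C(K_{\sigma}[F])$ is compactly generated in the sense required for the criterion, the one genuinely delicate point --- that ``compactly generated'' must be read, as is customary in the framework of Vologodsky and of Cisinski and D\'{e}glise, up to the canonical localization at the acyclic complexes --- being rendered harmless by the principal-ideal-domain structure of $K_{\sigma}[F]$.
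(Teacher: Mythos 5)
Your proof is correct, but it is considerably more thorough than the paper's, which consists of a single sentence citing the example before 1.5.5 of \cite{beilinsonV2008} for the fact that $C(\mathsf{A})$ is pretriangulated for any abelian category $\mathsf{A}$, and treats the $\mathbb{Q}_{p}$-linearity, cocompleteness, and compact generation as evident. Your verification of those remaining points is sound: $\sigma$ fixes $W(\mathbb{F}_{p})=\mathbb{Z}_{p}$, so $\mathbb{Q}_{p}$ is indeed central in $K_{\sigma}[F]$; colimits of complexes of modules are computed degreewise; and the twisted polynomial ring $K_{\sigma}[F]$ is a left and right Euclidean domain, hence hereditary, so the free module is a compact generator of the derived category by the standard Neeman argument. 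Your one genuinely substantive observation --- that ``compactly generated'' cannot be read literally at the level of the homotopy category $H^{0}(C(K_{\sigma}[F]))=K(\mathrm{Mod}\text{-}K_{\sigma}[F])$, which need not be compactly generated, and must instead refer to the dg localization $\mathcal{D}(K_{\sigma}[F])$ at the acyclics, which is the category actually used as the target of $\mathrm{Crys}$ --- is a point the paper passes over in silence, and your resolution of it is the right one. In short, what your approach buys is an actual proof of the three properties the paper asserts without argument, at the cost of some length; the paper buys brevity by outsourcing the only part it addresses to \cite{beilinsonV2008}.
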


\begin{proof}
Apply \cite{beilinsonV2008}, Example before 1.5.5, p.1718, which shows that
for every abelian category $\mathsf{A}$, the category of complexes
$C(\mathsf{A})$ is pretriangulated.
\end{proof}

Our task is to define the dg-quasifunctor
\[
\mathrm{Crys}\colon\mathcal{DM}_{\text{et}}^{\text{eff}}(k,\mathbb{Q}%
)\rightarrow\mathcal{D}(K_{\sigma}[F]).
\]
By Vologodsky's criterion and Ayoub's theorem (\cite{ayoub2013}, B.1) it
suffices to

\begin{enumerate}
\item construct a functor from smooth connected schemes over $k$ to
$C(K_{\sigma}[F])$ (hence to $\mathcal{D}(K_{\sigma}[F])$) that is
$\mathbb{A}^{1}$-homotopy invariant and satisfies \'{e}tale descent, and

\item check that the image lies in the dg subcategory $\mathcal{D}_{c}%
^{b}(K_{\sigma}[F])$) of $\mathcal{D}(K_{\sigma}[F])$).
\end{enumerate}

N. Tsuzuki has proved proper cohomological descent for rigid cohomology and,
together with B. Chiaroletto, \'{e}tale descent for rigid cohomology
(\cite{tsuzuki2003}, \cite{chiaT2007}). It is known that rigid cohomology is
$\mathbb{A}{}^{1}$-homotopy invariant.\footnote{It satisfies the K\"{u}nneth
formula, agrees with Monsky-Washnitzer cohomology on smooth affine varieties,
and the Monsky-Washnitzer cohomology of affine space is trivial except in
degree $0$ (see, for example, \cite{vanderput1986}).}

For a smooth scheme $X$ over $k$, we define $\mathrm{Crys}(X)$ to be Besser's
rigid complex $\mathbb{R}{}\Gamma(X/K)$ (\cite{besser2000}, 4.9, 4.13), which
is a canonical functorial complex of $K$-vector spaces that computes the rigid
cohomology of $X$. It is compatible with base change $k\rightarrow k^{\prime}$
and is endowed with a Frobenius map (ibid. Proposition 4.21, Corollary 4.22).
Therefore $\mathrm{Crys}(X)$ is an object of $C{}(K_{\sigma}[F])$. The
assignment $X\rightsquigarrow\mathrm{Crys}(X)$ is a functor; it satisfies
\'{e}tale descent and $\mathbb{A}{}^{1}$-homotopy invariance because rigid
cohomology satisfies \'{e}tale descent and $\mathbb{A}{}^{1}$-homotopy invariance.

In summary, there exists a dg-crystalline realization functor%
\[
\mathrm{Crys}\colon\mathcal{DM}_{\mathrm{et}}^{\mathrm{eff}}(k,\mathbb{Z}%
[p^{-1}])\rightarrow\mathcal{D}{}(K_{\sigma}[F]).
\]

\subsubsection{Notes}

\begin{plain}
\label{v3}The functor $\mathrm{Crys}$ becomes covariant if we take
\textrm{Crys}$(X)$ to be the dual of Besser's complex.
\end{plain}

\begin{plain}
\label{v4}On $\mathcal{DM}_{\mathrm{et}}^{\mathrm{eff}}(k,\mathbb{Z}[p^{-1}])$
this becomes the Borel-Moore rigid homology functor.
\end{plain}

\begin{plain}
\label{v5}As the Besser complex is compatible with base change, \textrm{Crys}%
$(X)$ for any smooth variety $X$ over a finite field $k$ is a complex of
$K_{\sigma}[F]$-modules with an action of $\Gamma_{0}$.
\end{plain}

\begin{plain}
\label{v6}Another method of obtaining \textrm{Crys} is to use the
overconvergent site of B. Le Stum (\cite{lestum2011}).
\end{plain}

\begin{plain}
\label{v7}There is an alternative construction of \textrm{Crys}. The
restriction functor from \'{e}tale sheaves on smooth schemes over $k$ to
\'{e}tale sheaves on smooth affine schemes over $k$ is an equivalence of
categories. Therefore, it suffices to define \textrm{Crys} on smooth affine
schemes. Here we can take the Monsky-Washnitzer complex of a smooth affine
variety. It is known that the cohomology of this complex computes the rigid
cohomology of the variety.
\end{plain}

\subsection*{Homotopy fibred products of dg-categories}

In this subsection, we review the definition of homotopy fibred products of dg
categories (\cite{drinfeld2004}, Section 15, Appendix IV; \cite{tabuada2010},
Chapter 3; \cite{benbassatB2012}, Section 4).

Let $\mathcal{B}{}$ be a dg category. Given objects $x,y$ of $\mathcal{B}$, we
write $\Hom_{\mathcal{B}{}}^{\bullet}(x,y)$ for the $\mathbb{Z}$-graded
complex of morphisms from $x$ to $y$. For the homotopy category $\mathrm{Ho}%
\mathcal{B}$ of $\mathcal{B}{}$,%
\[
\Hom_{\mathrm{Ho}\mathcal{B}{}}(x,y)=H^{0}(\Hom_{\mathcal{B}{}}^{\bullet
}(x,y)).
\]

Consider a diagram of dg-categories and dg-functors:%
\[
\begin{tikzcd}
{}& \mathcal{B}\arrow{d}{G}\\
\mathcal{C}\arrow{r}{L}&\mathcal{D}.
\end{tikzcd}
\]
The homotopy fibred product $\mathcal{C}\times_{\mathcal{D}}\mathcal{B}$ is a
dg-category (\cite{benbassatB2012}, Section 4). Its objects are triples
\[
x=(M,N,\phi)\qquad M\in\mathcal{B},\quad N\in\mathcal{C},\quad\phi
\in\Hom_{\mathcal{D}}^{0}(G(M),L(N))
\]
such that $\phi$ is closed and becomes invertible in $\mathrm{Ho}(D)$. The
morphisms of degree $i$ from an object $(M_{1},N_{1},\phi_{1})$ to an object
$(M_{2},N_{2},\phi_{2})$ are the triples%
\[
(\mu,\nu,\gamma)\in\Hom_{\mathcal{B}}^{i}(M_{1},M_{2})\oplus\Hom_{\mathcal{C}%
}^{i}(N_{1},N_{2})\oplus\Hom_{D}^{i-1}(G(M_{1}),L(N_{2})),
\]
and the differential is
\[
d(\mu,\nu,\gamma)=(d\mu,d\nu,d\gamma+\phi_{2}G(\mu)-(-1)^{i}L(\nu)\phi_{1}).
\]

\subsection*{Definition of $\mathcal{DM}(k,\mathbb{Z})$}

The full category $\mathcal{D}\mathcal{M}_{\mathrm{et}}(k,\mathbb{Q})$ of
\'{e}tale dg-motives is defined to be the localization of $\mathcal{DM}%
_{\text{et}}^{\text{eff}}(k,\mathbb{Q})$ by the Tate motive
(\cite{beilinsonV2008}, Section 6.1). Since the Tate motive maps under
$\mathrm{Crys}$ to an invertible object in $\mathcal{D}_{c}^{b}(K_{\sigma
}[F])$, the dg-realization $\mathrm{Crys}$ automatically extends to
\[
\mathrm{Crys}\colon\mathcal{D}\mathcal{M}_{\mathrm{et}}(k,\mathbb{Q}%
)\rightarrow\mathcal{D}_{c}^{b}(K_{\sigma}[F]).
\]

\begin{remark}
(a) As mentioned earlier, the equivalence $\sigma$ in (\ref{eq30}) has a dg
enhancement; this clearly extends to the non-effective categories. So the
definitions of \cite{beilinsonV2008} for $\mathcal{DM}(k,\mathbb{Q}{})$ of
Voevodsky motives (Nisnevich topology) can be replaced with the \'{e}tale version.

(b) (\cite{beilinsonV2008}, Section 6.1). Explicitly, an object of
$\mathcal{DM}_{\mathrm{et}}(k,\mathbb{Q}{})$ is represented as $M(a)$,
$M\in\mathcal{D}{}\mathcal{M}{}_{\mathrm{et}}^{\mathrm{eff}}(k,\mathbb{Q}{})$,
$a\in\mathbb{Z}{}$, and%
\[
\Hom_{\mathcal{D}{}\mathcal{M}{}_{\mathrm{et}}(k,\mathbb{Q}{})}%
(M(a),N(b))=\varinjlim_{n}\Hom_{\mathcal{D}{}\mathcal{M}{}_{\mathrm{et}%
}^{\mathrm{eff}}(k,\mathbb{Q}{})}(M(a+n),N(b+n));
\]
here the inductive limit is taken as $n\rightarrow+\infty$ with $a+n$, $b+n$
nonnegative. Cf. the definition of the category $\mathbf{Crys}(k)$, just
before Lemma 1.7, in \cite{milneR2004}.
\end{remark}

We write $\mathrm{Crys}$ again for the composite%
\[
\mathcal{D}\mathcal{M}_{\mathrm{et}}(k,\mathbb{Z}[p^{-1}])\rightarrow
\mathcal{D}\mathcal{M}_{\mathrm{et}}(k,\mathbb{Q})\overset{\mathrm{Crys}%
}{\longrightarrow}\mathcal{D}(K_{\sigma}[F]).
\]
Let $s\colon\mathcal{D}_{c}^{b}(R)\rightarrow\mathcal{D}_{c}^{b}(K_{\sigma
}[F])$ be the dg functor sending a complex of graded $R$-modules to the
associated simple complex tensored with $K$. We define the \emph{dg category
of motives} over $k$ to be the fibred product%
\[
\mathcal{DM}(k,\mathbb{Z})=\mathcal{DM}_{\mathrm{et}}(k,\mathbb{Z}%
[p^{-1}])\times_{\mathcal{D}_{c}^{b}(K_{\sigma}[F])}\mathcal{D}{}_{c}^{b}(R).
\]
In other words, the following diagram is cartesian:%
\[
\begin{tikzcd}
\mathcal{DM}(k,\mathbb{Z})\arrow{d}\arrow{r}&\mathcal{D}^b_c(R)\arrow{d}{s}\\
\mathcal{DM}_{\text{et}}(k,\mathbb{Z}[p^{-1})\arrow{r}{\mathrm{Crys}}&\mathcal{D}_c^b(K_{\sigma}[F])
\end{tikzcd}
\]
In the diagram, the vertical arrows are contravariant and covariant
respectively, and the horizontal arrows are covariant and contravariant
respectively. We also write $\mathcal{DM}(k)$ for $\mathcal{DM}(k,\mathbb{Z}%
{})$.

By definition, objects of $\mathcal{DM}(k,\mathbb{Z})$ are triples
$(M,N,\phi)$ where $N$ is an object of $\mathcal{DM}_{\mathrm{et}%
}(k,\mathbb{Z}[p^{-1}])$, $M$ is an object of $\mathcal{D}{}(R)$ and $\phi
\in\mathcal{D}_{c}^{b}(K_{\sigma}[F])(s(M),\mathrm{Crys}(N))$ is closed and
becomes invertible in $\mathrm{Ho(}\mathcal{D}(K_{\sigma}[F]))=D(K_{\sigma
}[F])$. The homotopy category of $\mathcal{DM}(k,\mathbb{Z})$ is the
triangulated category $\mathsf{\mathsf{DM}}(k)=\mathsf{\mathsf{DM}%
}(k,\mathbb{Z})$ of integral motivic complexes over $k$.

\begin{aside}
\label{v8}If we take the complex dual to Besser's complex, we will get a
covariant rigid realization (Borel-Moore homology, dual to cohomology) and
then all the arrows in the above diagram will be covariant. But then the
motive defined by a smooth projective variety will be the triple
$(DR\Gamma(X,W\Omega_{X}),hX,-)$ where the first component is the dual of the
complex computing the de Rham-Witt cohomology of $X$, or the triple
$(R\Gamma(X,W\Omega_{X}),DhX,-)$ where we use duality in Voevodsky's category.
\end{aside}

\subsection*{Mayer-Vietoris sequence}

Fix a commutative ring $A$ and an $A$-linear dg-category $\mathcal{B}$. The
dg-category $\mathcal{B}$-$\mathrm{Mod}$ of $\mathcal{B}$-modules is defined
to be the category of dg-functors from $\mathcal{B}$ to the dg-category of
complexes of $A$-modules. Every object $y$ of $\mathcal{B}$ defines a
dg-module $x\rightsquigarrow\Hom_{\mathcal{B}}(x,y)$. There is a natural
embedding $\mathcal{B}\rightarrow\mathcal{B}$\textrm{-}$\mathrm{Mod}$ (Yoneda)
given by $y\mapsto\Hom_{\mathcal{B}}(-,y)$. Every dg-functor $f\colon
\mathcal{B}^{\prime}\rightarrow\mathcal{B}$ induces a pullback dg-functor
$f^{\ast}\colon\mathcal{B}$-$\mathrm{Mod}\rightarrow\mathcal{B}^{\prime}%
$-$\mathrm{Mod}$.

Every $y$ in $\mathcal{B}$ defines the dg-module $\Hom_{\mathcal{B}}(-,y)$
whereas the bi-functor $\Hom_{\mathcal{B}}(-,-)$ defines a canonical bimodule
$M_{\mathcal{B}}$. See Section 2 of \cite{tabuada2013} for more details.

Consider the two bimodules $U$ and $V$ on $\mathcal{DM}(k,\mathbb{Z})$ defined
as follows. Say $x=(M,N,\phi)$ and $x^{\prime}=(M^{\prime},N^{\prime}%
,\phi^{\prime})$ are objects of $\mathcal{DM}(k,\mathbb{Z})$. We define
\[
U(x,x^{\prime})=\Hom_{\mathcal{D}_{c}^{b}(K_{\sigma}[F])}(s(M),\mathrm{Crys}%
(N^{\prime}))[1]
\]
and
\[
V(x,x^{\prime})=\Hom_{\mathcal{D}{}_{c}^{b}(R)}(M,M^{\prime})\oplus
\Hom_{\mathcal{DM}_{\mathrm{et}}(k,\mathbb{Z}[p^{-1}])}(N,N^{\prime}).
\]
The definition of the morphisms in the homotopy fibred dg category provides
the exact sequence of $\mathcal{DM}(k,\mathbb{Z})$-bimodules
\[
0\rightarrow U\rightarrow M_{DZ}\rightarrow V\rightarrow0.
\]

Applied to any pair $x,x^{\prime}$ of objects in $\mathcal{DM}(k,\mathbb{Z})$,
we get a short exact sequence of $\mathbb{Z}$-graded complexes
\[
0\rightarrow U(x,x^{\prime})\rightarrow\Hom_{\mathcal{DM}(k,\mathbb{Z}%
)}(x,x^{\prime})\rightarrow V(x,x^{\prime})\rightarrow0
\]
and the associated (Mayer-Vietoris) long exact sequence
\begin{align*}
\cdots &  \rightarrow\mathrm{Ext}_{\mathcal{DM}(k)}^{i}(x,x^{\prime
})\rightarrow\mathrm{Ext}_{\mathcal{D}{}_{c}^{b}(R)}^{i}(M,M)\oplus
\mathrm{Ext}_{\mathcal{DM}_{\mathrm{et}}(k,\mathbb{Z}[p^{-1}{}])}%
^{i}(N,N^{\prime})\rightarrow\\
&  \rightarrow\mathrm{Ext}_{\mathcal{D}_{c}^{b}(K_{\sigma}[F])}^{i}%
(sM,\mathrm{Crys(}N^{\prime}))\rightarrow\mathrm{Ext}_{\mathcal{DM}%
(k,\mathbb{Z}{})}^{i+1}(x,x^{\prime})\rightarrow\cdots.
\end{align*}
Note: the homotopy fibre-product is designed to give the Mayer-Vietoris sequence!

\subsection{Properties of $\mathsf{DM}(k)$}

We now develop some of the properties of $\mathsf{DM}(k)$.

\subsubsection{Triangulated structure}

All the dg-categories involved in the construction of $\mathcal{DM}%
(k,\mathbb{Z}{})$ are dg-enhancements of triangulated categories, and are
therefore pretriangulated. Recall that a dg-category $\mathcal{C}{}$ is
(strongly) pretriangulated if

\begin{itemize}
\item for each object $A$ of $\mathcal{C}$ and $m\in\mathbb{Z}{}$, there
exists an object, denoted $A[m]$, representing the functor $C\rightsquigarrow
\Hom^{m}(C,A)$), so%
\[
\Hom(C,A[m])=\Hom^{m}(C,A)\text{, all }C\in\ob(\mathsf{C});
\]

\item for each morphism $f\colon A\rightarrow B$ in $\mathcal{C}$ with $d\circ
f=0$, there exists an object, denoted Cone$(f)$, representing the functor
sending each $C\in\ob(\mathsf{C})$ to the cone on%
\[
\Hom(C,A)\overset{f\circ-}{\longrightarrow}\Hom(C,B)\text{.}%
\]

\end{itemize}

\noindent If $\mathsf{C}$ is strongly pre-triangulated, then $\mathrm{Ho}%
(\mathsf{C})$ has a translation functor, namely, $A\rightsquigarrow A[1]$, and
a class of distinguished triangles, namely, those isomorphic to one of the
form%
\[
A\overset{f}{\longrightarrow}B\rightarrow\mathrm{Cone}(f)\rightarrow A[1].
\]
With this structure, $\mathrm{Ho}(\mathsf{C})$ becomes a triangulated category
(\cite{beilinsonV2008}, 1.5.4).

For any object $x=(N,N,\phi)$ of $\mathcal{DM}(k,\mathbb{Z}{})$ and any
integer $n$, the object $x[n]$ representing the shift exists, and is given by
$(M[n],N[n],\phi_{n})$. Similarly, one can check that the cone of any map
$f\colon x\rightarrow x^{\prime}$ is representable. Therefore $\mathsf{DM}(k)$
is a triangulated category

\subsubsection{Motives of smooth varieties}

The category $\mathsf{DM}(k)$ contains an identity object $\1=(W,N(\Spec
k),\id)$ where $N(\Spec k)$, the motive of a point, is the identity object of
$\mathcal{DM}_{\mathrm{et}}^{\mathrm{eff}}(k,\mathbb{Z}{}[p^{-1}])$
(\cite{beilinsonV2008}, \S 2.2, \S 2.3). The object $W$ is the identity object
of $\mathsf{D}_{c}^{b}(R)$, and $s(W)=K$ considered as a complex in degree
zero with $F=\sigma$. This is \textrm{Crys}$(N(\Spec(k))$.

For any smooth proper variety $X$, consider the object%
\[
(R\Gamma(X,W\Omega),N(X),\phi)\in\mathcal{DM}(k,\mathbb{Z}{})
\]
where the first term is a suitable complex computing the de Rham-Witt
cohomology of $X$, the second term is the motive of $X$, and the third term is
the canonical isomorphism between de Rham-Witt cohomology (tensored with $K$)
and rigid cohomology (Berthelot 1986). The resulting object $h_{X}$ in
$\mathcal{DM}(k,\mathbb{Z}{})$ is well defined because any two suitable
complexes computing the de Rham-Witt cohomology are quasi-isomorphic.

For any smooth variety $X$, similarly consider the object $(C(X),N(X),\phi)$
where $N(X)$ is the motive of $X$ (see above) and $C(X)$ is the object of
$\mathsf{D}_{c}^{b}(R)$ attached to $X$ in \S 6 of \cite{milneR2013}. The map
$\phi$ comes from the canonical isomorphism between logarithmic de Rham-Witt
cohomology and rigid cohomology (\cite{nakk2008}, \cite{nakk2012}).

\subsubsection{Tensor structure and internal Hom's}

We expect that the homotopy categories $\mathsf{DM}(k)$ and $\mathsf{DM}%
^{\mathrm{eff}}(k)$ are tensor triangulated categories.

We sketch the proof for $\mathsf{DM}^{\mathrm{eff}}(k)$. There is a homotopy
tensor structure on $\mathcal{DM}^{\mathrm{eff}}(k,\mathbb{Z}{}[p^{-1}])$
(\cite{beilinsonV2008}, Section 2.2, 2.3; \cite{volog2012}, line before Lemma
2.3). There are clearly also homotopy tensor structures on $\mathcal{D}{}%
_{c}^{b}(R)$ and $\mathcal{D}{}_{c}^{b}(K_{\sigma}[F])$. The functors $s$ and
$\mathrm{Crys}$ are compatible with the tensor structure (up to homotopy) ---
for \textrm{Crys }this is a consequence of the K\"{u}nneth property for rigid
cohomology. This suffices to endow $\mathsf{DM}^{\mathrm{eff}}(k)$ with the
structure of a tensor triangulated category.

More precisely, the cartesian product of schemes induces a tensor product
structure on $\mathcal{DM}_{\mathrm{et}}^{\mathrm{eff}}(k,\mathbb{Z}{}%
[p^{-1}])$ (\cite{volog2012}, line before Lemma 2.3). The category is
generated by the motives $N(X)$ of smooth varieties. Thus, the homotopy tensor
structure on $\mathcal{DM}^{\mathrm{eff}}(k,\mathbb{Z}{})$, which defines the
tensor product structure on $\mathsf{DM}^{\mathrm{eff}}(k)$ is determined by
$h(X)\otimes h(Y)\simeq h(Y)\otimes h(X)$.

Similar comments apply to the internal Hom. The internal Hom of $N(X)$ and
$N(Y)$ in $\mathcal{DM}^{\mathrm{eff}}(k,\mathbb{Z}{}[p^{-1}])$ is defined by
the following equality for all smooth varieties $Z,$%
\[
\Hom(N(Z),R\underline{\Hom}(N(X),N(Y))=\Hom(N(Z\times X),N(Y))
\]
(\cite{beilinsonV2008}, Section 2.2).

On the other hand, the existence of a homotopy tensor structure on
$\mathcal{DM}(k,\mathbb{Z}{}[p^{-1}])$ does not seem to be known (ibid.,
remark at the end of Section 6.1). Therefore, it does not seem to be known
that $\mathsf{DM}(k)$ has a natural tensor structure.

\subsubsection{Tate twist}

The Tate object $R(1)$ in $\mathcal{DM}_{\mathrm{et}}^{\mathrm{eff}%
}(k,\mathbb{Z}{}[p^{-1}])$ is determined by the property%
\[
N(\Spec k)\oplus R(1)[1]=N(\mathbb{G}_{m})
\]
where $N(X)$ is the motive of the smooth variety $X$ (\cite{beilinsonV2008},
Section 2.2). The Tate twist is given by $F(1)=F\otimes R(1)$. Recall that the
rigid cohomology groups of $\mathbb{G}_{m}$ are%
\[
H_{\mathrm{rig}}^{0}(\mathbb{G}_{m})=K(0),\quad H_{\mathrm{rig}}%
^{1}(\mathbb{G}_{m})=K(1)
\]
where $K(m)$ is the $F$-isocrystal $K$ with $F=p^{m}\sigma$. Thus
$\mathrm{Crys}(R(1))=K(1)$.

The Tate object $E(1)$ in $\mathsf{DM}(k)$ is the triple%
\[
E(1)=(W(1),R(1),\phi)
\]
where $W(1)$ is the Tate twist of the identity object of $\mathsf{D}_{c}%
^{b}(R)$ (\cite{milneR2005}, Section 2, or \cite{milneR2013}, 1.6) and $\phi$
is the natural isomorphism between $sW(1)=K(1)$ and \textrm{Crys}$(R(1))$. The
Tate twist on $\mathsf{DM}(k)$ is given by $M(1)=M\otimes E(1)$ for $M$ an
object of $\mathsf{DM}(k)$.

\subsubsection{Realization functors}

There are realization functors $r_{l}$ on $\mathsf{DM}(k)$ for all primes $l$.

The dg functor%
\[
pr_{2}\colon\mathcal{DM}(k,\mathbb{Z}{})\rightarrow\mathcal{DM}(k,\mathbb{Z}%
{}[p^{-1}]),\quad(M,N,\phi)\rightsquigarrow N,
\]
induces a functor $pr_{2}\colon\mathsf{DM}(k)\rightarrow\mathsf{DM}%
_{\mathrm{et}}(k,\mathbb{Z}{}[p^{-1}])$ which is clearly an exact functor
between these triangulated categories. For each prime $l\neq p$, there is an
\'{e}tale realization functor
\[
r_{l}^{\prime}\colon\mathsf{DM}_{\mathrm{et}}(k,\mathbb{Z}{}[p^{-1}%
])\rightarrow\mathsf{D}_{c}^{b}(k,\mathbb{Z}{}_{l})
\]
which is an exact functor of tensor triangulated categories (\cite{ayoub2013}%
). We define the $l$-adic realization $r_{l}$ to be the composite%
\[
r_{l}^{\prime}\circ pr_{2}\colon\mathsf{DM}(k)\rightarrow\mathsf{D}_{c}%
^{b}(k,\mathbb{Z}{}_{l}).
\]
As $r_{l}^{\prime}$ is an exact functor of triangulated categories, so also is
$r_{l}$.

The dg functor%
\[
pr_{1}\colon\mathcal{DM}(k,\mathbb{Z}{})\rightarrow\mathcal{D}_{c}%
^{b}(R),\quad(M,N,\phi)\rightsquigarrow M,
\]
induces a functor $\mathsf{DM}(k)\rightarrow\mathsf{D}_{c}^{b}(R)$ on the
associated triangulated categories. This is the realization functor $r_{p}$.

\subsubsection{Rigidity}

\begin{proposition}
\label{v9}For all $l$, including $l=p$, the realization functor $r_{l}$
defines an equivalence on the subcategories of objects killed by $l^{n}$.
\end{proposition}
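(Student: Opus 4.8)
The plan is to exploit the fact that the two categories glued in
\[
\mathcal{DM}(k,\mathbb{Z})=\mathcal{DM}_{\mathrm{et}}(k,\mathbb{Z}[p^{-1}])\times_{\mathcal{D}_{c}^{b}(K_{\sigma}[F])}\mathcal{D}_{c}^{b}(R)
\]
have complementary torsion: $\mathcal{DM}_{\mathrm{et}}(k,\mathbb{Z}[p^{-1}])$ is $\mathbb{Z}[p^{-1}]$-linear, hence has no nonzero $p$-torsion, while $\mathcal{D}_{c}^{b}(R)$ is $W(k)$-linear, hence has no nonzero $\ell$-torsion for $\ell\neq p$. So for each $l$ exactly one projection detects the $l^{n}$-torsion, the gluing datum of an $l^{n}$-torsion object degenerates, and for $\ell\neq p$ the statement reduces to the rigidity theorem already available for $\mathsf{DM}_{\mathrm{et}}$.

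First I would identify the $l^{n}$-torsion objects of $\mathsf{DM}(k)$. If $x=(M,N,\phi)$ is killed by $l^{n}$, then $M$ is killed by $l^{n}$ in $\mathsf{D}_{c}^{b}(R)$ and $N$ is killed by $l^{n}$ in $\mathsf{DM}_{\mathrm{et}}(k,\mathbb{Z}[p^{-1}])$. For $\ell\neq p$, multiplication by $\ell$ is invertible on $R$-modules, so $M\simeq 0$; replacing $x$ by an isomorphic object we may take $x=(0,N,\phi)$ with $N$ killed by $\ell^{n}$, whence $\mathrm{Crys}(N)$ is a complex of $K$-vector spaces killed by $\ell^{n}$, hence $0$, so $\phi$ is the unique map $0\to 0$. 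For $l=p$ the roles are reversed: $N\simeq 0$, so $x=(M,0,\phi)$ with $M$ killed by $p^{n}$, and $s(M)$ is a complex of $K$-vector spaces killed by $p^{n}$, hence $0$, so again $\phi$ is forced. Conversely, for any $N$ in $\mathsf{DM}_{\mathrm{et}}(k,\mathbb{Z}[p^{-1}])$ killed by $\ell^{n}$ the triple $(0,N,0)$ is a legitimate object of $\mathsf{DM}(k)$ killed by $\ell^{n}$, and for any $M$ in $\mathsf{D}_{c}^{b}(R)$ killed by $p^{n}$ the triple $(M,0,0)$ is a legitimate object of $\mathsf{DM}(k)$ killed by $p^{n}$ (in both cases the compatibility datum is $0\to 0$).

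Next I would read off the Hom-groups from the Mayer--Vietoris long exact sequence of the fibred product constructed above. When $x$ and $x'$ both have vanishing $\mathcal{DM}_{\mathrm{et}}$-component, the $\mathcal{D}_{c}^{b}(K_{\sigma}[F])$-term of that sequence vanishes and the $\mathcal{DM}_{\mathrm{et}}$-summand of its middle term vanishes, so $\Hom_{\mathsf{DM}(k)}(x,x')\xrightarrow{\sim}\Hom_{\mathsf{D}_{c}^{b}(R)}(M,M')$ via $pr_{1}=r_{p}$; symmetrically, when $x$ and $x'$ both have vanishing $\mathcal{D}_{c}^{b}(R)$-component, $\Hom_{\mathsf{DM}(k)}(x,x')\xrightarrow{\sim}\Hom_{\mathsf{DM}_{\mathrm{et}}(k,\mathbb{Z}[p^{-1}])}(N,N')$ via $pr_{2}$. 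Combined with the first step, the former shows that $r_{p}$ restricts to an equivalence from the full subcategory of $\mathsf{DM}(k)$ of objects killed by $p^{n}$ onto the full subcategory of $\mathsf{D}_{c}^{b}(R)$ of objects killed by $p^{n}$ (essential surjectivity by the triples $(M,0,0)$, full faithfulness by the Hom-isomorphism), and the latter shows that $pr_{2}$ restricts to an equivalence from the subcategory of $\mathsf{DM}(k)$ of objects killed by $\ell^{n}$ onto the analogous subcategory of $\mathsf{DM}_{\mathrm{et}}(k,\mathbb{Z}[p^{-1}])$.

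It then remains to treat $\ell\neq p$ by composing the last equivalence with the \'etale realization $r_{\ell}'\colon\mathsf{DM}_{\mathrm{et}}(k,\mathbb{Z}[p^{-1}])\to\mathsf{D}_{c}^{b}(k,\mathbb{Z}_{\ell})$ and invoking the rigidity theorem of Suslin and Voevodsky (\cite{suslinV1996}, \S 4; \cite{voevodsky2000}, 3.3.3): $r_{\ell}'$ induces an equivalence $\mathsf{DM}_{\mathrm{et}}(k,\mathbb{Z}/\ell^{n}\mathbb{Z})\simeq\mathsf{D}_{c}^{b}(k,\mathbb{Z}/\ell^{n}\mathbb{Z})$, and identifying both sides with the corresponding subcategories of objects killed by $\ell^{n}$ (as in \S 6.1) gives the assertion for $r_{\ell}=r_{\ell}'\circ pr_{2}$. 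I expect no genuine obstacle: the argument is essentially formal, and its only delicate point is the bookkeeping in the first step -- verifying that on torsion objects a triple degenerates, up to isomorphism, to a single factor with trivial gluing datum -- together with the routine identification of the $\ell^{n}$-torsion subcategory of $\mathsf{D}_{c}^{b}(k,\mathbb{Z}_{\ell})$ with $\mathsf{D}_{c}^{b}(k,\mathbb{Z}/\ell^{n}\mathbb{Z})$.
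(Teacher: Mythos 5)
Your argument is correct and is exactly the paper's (very terse) proof written out in full: the paper simply says the result ``follows from the Mayer--Vietoris sequence and \cite{voevodsky2000}, 3.3.3'', and your two steps --- degenerating the gluing datum on $l^{n}$-torsion triples using the complementary torsion of $\mathcal{D}_{c}^{b}(R)$ and $\mathcal{DM}_{\mathrm{et}}(k,\mathbb{Z}[p^{-1}])$, then reading off Homs from the Mayer--Vietoris sequence and invoking Suslin--Voevodsky rigidity for $\ell\neq p$ --- supply precisely the details the authors omit.
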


\begin{proof}
This follows from the Mayer-Vietoris sequence and \cite{voevodsky2000}, 3.3.3.
\end{proof}

We say that an object $X$ of a triangulated category $\mathsf{C}$ is
$n$-torsion ($n\in\mathbb{Z}$) if the abelian group $\Hom(X,Y)$ is killed by
$n$ for all objects $Y$ of $\mathsf{C}$. We say that $X$ is $l$-power torsion
for a prime $l$ if every element of $\Hom(X,Y)$ is killed by a power of $l$,
all $Y$ in $\mathsf{C}$.

\begin{proposition}
\label{v10}For any $x=(M,N,\phi)$ and $x^{\prime}=(M^{\prime},N^{\prime}%
,\phi^{\prime})$ in $\mathcal{DM}(k,\mathbb{Z}{})$, and every positive integer
$n$, the map%
\[
\Hom_{\mathsf{DM}(k)}(x,x^{\prime})\otimes\mathbb{Z}{}/p^{n}\mathbb{Z}%
{}\rightarrow\Hom_{\mathsf{D}_{c}^{b}(R)}(M,M^{\prime})\otimes\mathbb{Z}%
{}/p^{n}\mathbb{Z}{}%
\]
induced by $r_{p}$ is an isomorphism.
\end{proposition}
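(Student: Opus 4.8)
The plan is to run the statement through the Mayer--Vietoris sequence of the homotopy fibred product $\mathcal{DM}(k,\mathbb{Z})$, using that the two other corners of the fibred square have morphism complexes which are, respectively, $\mathbb{Q}_p$-linear and $\mathbb{Z}[p^{-1}]$-linear, so that multiplication by $p^n$ is invertible on them. First I would record the short exact sequence of $\mathbb{Z}$-graded complexes of abelian groups produced by the homotopy fibred dg structure,
\[
0\to\Hom^{\bullet}_{\mathcal{D}_c^b(K_\sigma[F])}(sM,\mathrm{Crys}(N'))[-1]\to\Hom^{\bullet}_{\mathcal{DM}(k)}(x,x')\to\Hom^{\bullet}_{\mathcal{D}_c^b(R)}(M,M')\oplus\Hom^{\bullet}_{\mathcal{DM}_{\mathrm{et}}}(N,N')\to0,
\]
whose last arrow is the sum of the two projections and whose first component therefore induces $r_p=pr_1$ on cohomology; here $\mathcal{DM}_{\mathrm{et}}$ is the $\mathbb{Z}[p^{-1}]$-linear étale category. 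By Lemma~\ref{v2} every term of the left-hand complex is a $\mathbb{Q}_p$-vector space, and every term of the $\mathcal{DM}_{\mathrm{et}}$-summand is a $\mathbb{Z}[p^{-1}]$-module; in either case $p^n$ acts invertibly, so applying $-\otimes^{L}_{\mathbb{Z}}\mathbb{Z}/p^n\mathbb{Z}$ (the cone of $p^n$) annihilates these two complexes. The short exact sequence of complexes gives a distinguished triangle after $-\otimes^{L}_{\mathbb{Z}}\mathbb{Z}/p^n\mathbb{Z}$, and two of its vertices vanish, so $r_p$ induces a quasi-isomorphism
\[
\Hom^{\bullet}_{\mathcal{DM}(k)}(x,x')\otimes^{L}_{\mathbb{Z}}\mathbb{Z}/p^n\mathbb{Z}\ \xrightarrow{\ \sim\ }\ \Hom^{\bullet}_{\mathcal{D}_c^b(R)}(M,M')\otimes^{L}_{\mathbb{Z}}\mathbb{Z}/p^n\mathbb{Z}.
\]

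To upgrade this derived isomorphism to the asserted isomorphism on $H^{0}\otimes\mathbb{Z}/p^n\mathbb{Z}$, I would bring in the cones ${x}^{(p^n)}$ and ${x'}^{(p^n)}$ of multiplication by $p^n$. Because $p^n$ is invertible on the morphism complexes of $\mathcal{DM}_{\mathrm{et}}$ and of $\mathcal{D}_c^b(K_\sigma[F])$, the étale component of ${x'}^{(p^n)}$, its $\mathrm{Crys}$-realization, and $s$ of the $R$-component all become zero in the relevant homotopy categories. Hence the Mayer--Vietoris sequence for the pair $(x,{x'}^{(p^n)})$ collapses, and $r_p$ already yields isomorphisms $\Ext^{i}_{\mathsf{DM}(k)}(x,{x'}^{(p^n)})\xrightarrow{\sim}\Ext^{i}_{\mathsf{D}_c^b(R)}(M,{M'}^{(p^n)})$ for all $i$, and likewise $\Ext^{i}_{\mathsf{DM}(k)}({x}^{(p^n)},x')\xrightarrow{\sim}\Ext^{i}_{\mathsf{D}_c^b(R)}({M}^{(p^n)},M')$ --- which is in effect the $l=p$ instance of the rigidity Proposition~\ref{v9}. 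Comparing, on the two sides, the universal coefficient (Bockstein) exact sequences attached to the triangle $x'\xrightarrow{p^n}x'\to{x'}^{(p^n)}$, and using that all the complexes in sight are bounded (so that the comparison can be propagated from the vanishing extreme degrees), one extracts the isomorphisms $\Ext^{i}_{\mathsf{DM}(k)}(x,x')\otimes\mathbb{Z}/p^n\mathbb{Z}\xrightarrow{\sim}\Ext^{i}_{\mathsf{D}_c^b(R)}(M,M')\otimes\mathbb{Z}/p^n\mathbb{Z}$ for every $i$; the case $i=0$ is the proposition.

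The reduction to the derived statement is formal. The delicate point is the last step: the quasi-isomorphism only controls $\Hom\otimes^{L}\mathbb{Z}/p^n\mathbb{Z}$, and to isolate the $\Hom\otimes\mathbb{Z}/p^n\mathbb{Z}$ summand from the $p^n$-torsion of $\Ext^{1}$ one must genuinely match the two Bockstein filtrations, not merely their derived reductions --- and this is where the collapse of Mayer--Vietoris for the $p$-power-torsion twists ${x}^{(p^n)}$, ${x'}^{(p^n)}$ (hence Proposition~\ref{v9} at $p$) is used in an essential way.
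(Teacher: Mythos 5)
The paper gives no proof of this proposition (it is stated with ``Proof. Omitted''), so I can only judge your argument on its own terms. Your first half is correct and is surely the intended skeleton: the Mayer--Vietoris short exact sequence of Hom-complexes for the homotopy fibred product, together with the facts that $\Hom_{\mathcal{D}_{c}^{b}(K_{\sigma}[F])}^{\bullet}(sM,\mathrm{Crys}(N'))$ is a complex of $K$-vector spaces and $\Hom_{\mathcal{DM}_{\mathrm{et}}}^{\bullet}(N,N')$ a complex of $\mathbb{Z}[1/p]$-modules, does give that $r_{p}$ induces isomorphisms $\Ext_{\mathsf{DM}(k)}^{i}(x,x'^{(p^{n})})\rightarrow\Ext_{\mathsf{D}_{c}^{b}(R)}^{i}(M,M'^{(p^{n})})$ for all $i$, equivalently a quasi-isomorphism after $-\otimes_{\mathbb{Z}}^{L}\mathbb{Z}/p^{n}\mathbb{Z}$; this is the $l=p$ case of Proposition \ref{v9}.

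The last step, however, is a genuine gap, and you flag it without closing it. A map $f\colon C\rightarrow D$ of \emph{bounded} complexes such that $f\otimes^{L}\mathbb{Z}/p^{n}\mathbb{Z}$ is a quasi-isomorphism need \emph{not} induce isomorphisms $H^{i}(C)\otimes\mathbb{Z}/p^{n}\mathbb{Z}\rightarrow H^{i}(D)\otimes\mathbb{Z}/p^{n}\mathbb{Z}$, and ``propagation from the vanishing extreme degrees'' cannot repair this: in the map of Bockstein sequences the middle arrows are isomorphisms, so $\Coker(a_{i})\simeq\Ker(b_{i+1})$ where $a_{i}=H^{i}(f)\otimes\mathbb{Z}/p^{n}\mathbb{Z}$ and $b_{i+1}$ is $H^{i+1}(f)$ restricted to $p^{n}$-torsion, and injectivity of $b_{i+1}$ is a statement about torsion subgroups that is never controlled by the $a_{j}$, so the induction does not close. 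Concretely, let $D=\mathbb{Z}$ in degree $0$, let $C$ be the complex $(\mathbb{Z}\hookrightarrow\mathbb{Q})$ in degrees $0,1$, and let $f$ be the identity in degree $0$; then $\mathrm{Cone}(f)\simeq\mathbb{Q}$ is uniquely divisible, so $f\otimes^{L}\mathbb{Z}/p^{n}\mathbb{Z}$ is a quasi-isomorphism of bounded complexes, yet $H^{0}(C)\otimes\mathbb{Z}/p^{n}\mathbb{Z}=0\neq\mathbb{Z}/p^{n}\mathbb{Z}=H^{0}(D)\otimes\mathbb{Z}/p^{n}\mathbb{Z}$. This toy example has exactly the shape of the situation at hand, where the third vertex of the triangle is a complex of $\mathbb{Q}_{p}$-vector spaces. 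Chasing the Mayer--Vietoris long exact sequence identifies the obstruction precisely: if $Q^{i}$ denotes the image of $\Ext_{\mathsf{D}_{c}^{b}(R)}^{i}(M,M')\oplus\Ext_{\mathsf{DM}_{\mathrm{et}}}^{i}(N,N')$ in $\Ext_{\mathsf{D}_{c}^{b}(K_{\sigma}[F])}^{i}(sM,\mathrm{Crys}(N'))$, then the map of the proposition is injective with cokernel $Q^{i}/p^{n}Q^{i}$, so what must actually be proved is that $Q^{i}$ is $p$-divisible. This is not formal: it is the analogue of the surjectivity of $\mathbb{Z}_{p}\oplus\mathbb{Z}[1/p]\rightarrow\mathbb{Q}_{p}$ in the arithmetic fracture square, and it requires a genuine comparison between the integral $p$-adic Ext's and the \'{e}tale motivic Ext's inside the rational crystalline Ext's. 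Your proof, as written, supplies no such input.
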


\begin{proof}
Omitted.
\end{proof}

\subsection*{Motivic complexes over $\mathbb{F}{}_{q}$}

We need a category $\mathsf{\mathsf{DM}}(\mathbb{F}{}_{q})$ such that the
functor
\[
R\Hom(\1,-)\colon\mathsf{\mathsf{DM}}(\mathbb{F}{}_{q})\rightarrow
\mathsf{D}(\mathbb{Z}{}\mathbb{)}%
\]
\textsf{ factors through }$\mathsf{D}(\mathbb{Z}\Gamma_{0})$. We can either
proceed as above, but with the Weil-\'{e}tale topology for the \'{e}tale
topology or, more simply, as in \S 6.

\subsection{Applications to algebraic varieties}

To give an object of $\mathcal{DM}(k)$ amounts to giving objects of
$\mathcal{DM}_{\mathrm{et}}(k,\mathbb{Z}{}[p^{-1}])$ and $\mathcal{D}_{c}%
^{b}(R)$ together with an isomorphism between their realizations in
$\mathcal{D}{}_{c}^{b}(K_{\sigma}[F])$. In the final section of
\cite{milneR2013}, we explained how to attach an object of $\mathsf{D}_{c}%
^{b}(R)$ to an arbitrary variety, a variety with log structure, a
Deligne-Mumford stack, etc.. All of the statements there carry over
\textit{mutatis mutandis} to the present situation.

\begin{plain}
\label{v11}Choudhury (2012)\nocite{choudhury2012} attaches an object in
Voevodsky's category $\mathsf{\mathsf{DM}}^{\mathrm{eff}}(k,\mathbb{Q}{})$ to
a smooth Deligne-Mumford stack.
\end{plain}

\begin{plain}
\label{v12}\cite{voevodsky2010} attaches an object in his category
$\mathsf{\mathsf{DM}}_{-}^{\mathrm{eff}}(k)$ to a smooth simplicial scheme
over a field $k$.
\end{plain}

\section{Motivic complexes for rational Tate classes}

In this section, we sketch the construction of a category $\mathsf{DM}(k)$ for
which the \textquotedblleft Tate conjecture\textquotedblright\ is
automatically true. The construction requires only the rationality conjecture
of \cite{milne2009}, which is much weaker than the Tate conjecture.

\subsection{Rational Tate classes}

In this subsection, we review part of \cite{milne2009}. Let $\mathbb{Q}%
{}^{\mathrm{al}}$ denote the algebraic closure of $\mathbb{Q}{}$ in
$\mathbb{C}{}$. Fix a $p$-adic prime $w$ of $\mathbb{Q}{}^{\mathrm{al}}$, and
let $\mathbb{F}{}$ be its residue field. Then $\mathbb{F}{}$ is an algebraic
closure of $\mathbb{F}{}_{p}$. We assume that the reader is familiar with the
theory of absolute Hodge classes (\cite{deligne1982}).

\begin{rconjecture}
\label{c0}Let $A$ be an abelian variety over $\mathbb{Q}{}^{\mathrm{al}}$ with
good reduction to an abelian variety $A_{0}$ over $\mathbb{F}{}$, and let
$d=\dim(A)$. An absolute Hodge class $\gamma$ of codimension $i$ on $A$
defines (by specialization) classes $\gamma_{l}\in H^{2i}(A_{0},\mathbb{Q}%
_{l}(i))$ for all $l\neq p$ and $\gamma_{p}\in H_{\mathrm{crys}}^{2i}%
(A_{0}/W)(i)_{\mathbb{Q}{}}$. Let $D_{1},\ldots,D_{d-i}$ be divisors on
$A_{0}$, and let $\delta_{1}(l),\ldots,\delta_{d-i}(l)$ denote their
$l$-cohomology classes. The conjecture says that%
\[
\gamma_{l}\cdot\delta_{1}(l)\cdot\cdots\cdot\delta_{d-i}(l),
\]
which a priori lies in $\mathbb{Q}{}_{l}$ or $\left(  \mathbb{Q}%
{}^{\mathrm{al}}\right)  _{w}$, is a rational number independent of $l$.
\end{rconjecture}

Now let $\mathcal{S}{}$ be a class of smooth projective algebraic varieties
over $\mathbb{F}{}$ that is closed under passage to a connected component and
under the formation of finite products and disjoint unions. We assume that
$\mathcal{S}{}$ contains the class $\mathcal{S}{}_{0}$ of all abelian
varieties over $\mathbb{F}{}$, and that the Frobenius elements of the
varieties in $\mathcal{S}{}$ act semisimply on cohomology. For an $X$ in
$\mathcal{S}{}$, we let $H_{\mathbb{A}{}}^{2i}(X)(i)$ denote the restricted
product of the cohomology groups $H^{2i}(X,\mathbb{Q}{}_{l}(i))$ for $l\neq p$
with $H_{\mathrm{crys}}^{2i}(X/W)_{\mathbb{Q}{}}$.

\begin{definition}
\label{c1}A family $(\mathcal{R}{}^{\ast}(X))_{X\in\mathcal{S}{}}$ with each
$\mathcal{R}{}^{\ast}(X)$ a graded $\mathbb{Q}{}$-subalgebra of $H_{\mathbb{A}%
{}}^{2\ast}(X)(\ast)$ is a \emph{good theory of rational Tate classes on
}$\mathcal{S}{}$ if it satisfies the following conditions:

(R1) for every regular map $f\colon X\rightarrow Y$ of varieties in
$\mathcal{S}$, $f^{\ast}$ maps $\mathcal{R}{}^{\ast}(Y)$ into $\mathcal{R}%
{}^{\ast}(X)$ and $f_{\ast}$ maps $\mathcal{R}{}^{\ast}(X)$ into
$\mathcal{R}{}^{\ast}(Y)$;

(R2) for every $X$ in $\mathcal{S}{}$, $\mathcal{R}{}^{1}(X)$ contains the
divisor classes;

(R3) for all CM abelian varieties $A$ over $\mathbb{Q}{}^{\mathrm{al}}$, the
absolute Hodge classes on $A$ map to elements of $\mathcal{R}{}^{\ast}(A_{0})$
under the specialization map;

(R4) For all varieties $X$ in $\mathcal{S}{}$, the $\mathbb{Q}{}$-algebra
$\mathcal{R}{}^{\ast}(X)$ is of finite degree, and the $l$-primary components
of every element of $\mathcal{R}{}^{\ast}(X)$ are $l$-adic Tate classes.
\end{definition}

Recall that an abelian variety $A$ is CM if its endomorphism algebra
$\End(A)_{\mathbb{Q}{}}$ contains an \'{e}tale subalgebra of degree $2\dim A$
over $\mathbb{Q}{}$, and that such an abelian variety over $\mathbb{Q}%
{}^{\mathrm{al}}$ has good reduction at all the primes of $\mathbb{Q}%
{}^{\mathrm{al}}$. For the space $\mathcal{T}{}_{l}^{i}(X)$ of Tate classes in
$H^{2i}(X,\mathbb{Q}{}_{\ell}(i))$ or $H_{\mathrm{crys}}^{2i}%
(X/W)(i)_{\mathbb{Q}}$, see ibid. pp.112--113.

\begin{theorem}
\label{c2}(a) There exists at most one good theory of rational Tate classes on
$\mathcal{S}{}$.

(b) There exists a good theory of rational Tate classes on $\mathcal{S}_{_{0}%
}$ if the rationality conjecture holds.

(c) The Tate conjecture holds for every good theory of rational Tate classes,
i.e., the maps $\mathcal{R}{}^{i}(X)\otimes\mathbb{Q}{}_{l}\rightarrow
\mathcal{T}{}_{l}^{i}(X)$ induced by the projection maps are isomorphisms for
all $X$, $i$, and $l$.
\end{theorem}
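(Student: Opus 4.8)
The plan is to establish the three assertions in the order (b), (c), (a), following \cite{milne2009}. The engine throughout is Deligne's theorem that on an abelian variety over a field of characteristic zero every Hodge class is absolute (\cite{deligne1982}), combined with the Honda--Tate theorem and the description (Langlands--Rapoport, and Milne) of the Tannakian category of abelian motives over $\mathbb{F}$ in terms of the Serre (Taniyama) group; the last ingredient is what converts the rather weak axioms (R1)--(R4) into rigid control.

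For (b): given an abelian variety $A_0$ over $\mathbb{F}$, the Honda--Tate theorem lets me write $A_0$, up to isogeny, as a direct factor of the reduction $\bar B$ of a CM abelian variety $B$ over a number subfield of $\mathbb{Q}^{\mathrm{al}}$; I fix such a $B$ and an idempotent correspondence $e$ cutting out $A_0$ from $\bar B$. By Deligne's theorem the absolute Hodge classes on $B$ have mutually compatible, $l$-independent realizations away from $p$, and the rationality conjecture (\ref{c0}) supplies exactly the missing input that their crystalline ($p$-adic) components are again rational and lie in the same compatible family; their specializations thus generate a graded $\mathbb{Q}$-subalgebra of $H_{\mathbb{A}}^{2\ast}(\bar B)(\ast)$, and I set $\mathcal{R}^{\ast}(A_0)=e\cdot(\text{this algebra})$. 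The one subtle verification is that $\mathcal{R}^{\ast}$ is well defined (independent of the choices of $B$ and $e$) and functorial; this is where the Tannakian formalism enters, since the category of CM motives over $\mathbb{F}$ has a well-defined fundamental group and $\mathcal{R}^{\ast}$ is forced to be the algebra of invariants of that group, a description manifestly free of presentations. Axioms (R1)--(R3) are then routine, and (R4) uses that the Serre group is of multiplicative type.

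For (c): I want the maps $\mathcal{R}^{i}(X)\otimes\mathbb{Q}_{l}\to\mathcal{T}_{l}^{i}(X)$ to be isomorphisms. Injectivity holds because in the construction the realizations of a rational Tate class are identified through the comparison isomorphisms of a characteristic-zero lift, so a class that dies in one realization dies in all of them; alternatively, (R1) makes the top-degree degree map land in $\mathbb{Q}$ independently of $l$, and semisimplicity of Frobenius (hypothesized on $\mathcal{S}$) makes cup product nondegenerate on Tate classes, which again forces injectivity. Surjectivity is the substance: one needs $\dim_{\mathbb{Q}}\mathcal{R}^{i}(X)=\dim_{\mathbb{Q}_{l}}\mathcal{T}_{l}^{i}(X)$. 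Using (R1), (R2) and Honda--Tate this is reduced to the case of a CM abelian variety over $\mathbb{F}$, where it becomes the computation --- carried out through the Serre group and Deligne's theorem --- that the space of absolute Hodge classes on $\bar B$ has the same dimension as the space of Frobenius-fixed cohomology classes, that is, as $\mathcal{T}_{l}^{i}$.

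Finally (a): if $\mathcal{R}_{1},\mathcal{R}_{2}$ are good theories on $\mathcal{S}$, axiom (R3) forces $\mathcal{R}_{1}^{\ast}=\mathcal{R}_{2}^{\ast}$ on CM abelian varieties; functoriality (R1) together with Honda--Tate propagates this to all of $\mathcal{S}_{0}$; and then part (c), applied to each $\mathcal{R}_{i}$, combined with (R1), (R2) and (R4), identifies $\mathcal{R}_{i}^{\ast}(X)$ for every $X\in\mathcal{S}$ with the subspace of $H_{\mathbb{A}}^{2\ast}(X)(\ast)$ cut out by conditions not depending on $i$, so $\mathcal{R}_{1}=\mathcal{R}_{2}$. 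The main obstacle is the surjectivity step in (c): producing enough rational Tate classes on CM abelian varieties over $\mathbb{F}$ to exhaust $\mathcal{T}_{l}^{i}$. This is precisely the place where the rationality conjecture cannot be dispensed with, and it is what makes the implication ``good theory $\Longrightarrow$ Tate conjecture'' nontrivial, even though a good theory is only a cohomological datum with no algebraicity built in.
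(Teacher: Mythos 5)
The paper gives no argument here: its ``proof'' is the citation \emph{Ibid.\ Theorem 3.3; Theorem 4.5; Theorem 3.2}, so your reconstruction has to be measured against \cite{milne2009} itself. For part (b), and for the abelian-variety case of parts (a) and (c), your architecture is essentially the right one and matches that source: Honda--Tate realizes any $A_0$ over $\mathbb{F}$ as a factor (cut out by an idempotent built from divisor classes, hence lying in $\mathcal{R}$ by (R1)--(R2)) of the reduction of a CM abelian variety; Deligne's theorem plus the Taniyama/Serre-group computation shows that Lefschetz classes together with specializations of absolute Hodge classes on CM lifts span $\mathcal{T}_l^{i}$; and the rationality conjecture is exactly what makes the resulting adelic algebra a finite-degree $\mathbb{Q}$-structure (aligning the crystalline component with the $\ell$-adic ones).

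The genuine gap is that (a) and (c) are asserted for \emph{every} $X$ in $\mathcal{S}$, and $\mathcal{S}$ is only required to contain $\mathcal{S}_0$ --- it may contain smooth projective varieties that are not abelian varieties. Your surjectivity step ``using (R1), (R2) and Honda--Tate this is reduced to the case of a CM abelian variety'' works only when $X$ is an abelian variety: Honda--Tate says nothing about other varieties, and the axioms provide no correspondence carrying Tate classes on a general $X\in\mathcal{S}$ to an abelian variety. Likewise your uniqueness argument for general $X$ --- that $\mathcal{R}_1^{\ast}(X)$ and $\mathcal{R}_2^{\ast}(X)$ are ``cut out by conditions not depending on $i$'' --- does not close: knowing that each $\mathcal{R}_j^{i}(X)$ is a $\mathbb{Q}$-structure on $\mathcal{T}_l^{i}(X)$ for every $l$ does not identify them inside the restricted product (two $\mathbb{Q}$-structures on the same family of $\mathbb{Q}_l$-spaces can differ), and the anchor that forces equality --- a common $\mathbb{Q}$-subspace already of full dimension, supplied for abelian varieties by (R2)+(R3) together with the spanning theorem --- is precisely what is missing for $X\notin\mathcal{S}_0$. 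For the same reason, ``(R3) forces $\mathcal{R}_1^{\ast}=\mathcal{R}_2^{\ast}$ on CM abelian varieties'' is too quick as stated: (R3) gives only an inclusion of the specialized absolute Hodge classes into each $\mathcal{R}_j$, and equality requires the dimension count from (c). You should either supply the argument \cite{milne2009} uses for non-abelian members of $\mathcal{S}$, or make explicit that your proof establishes (a) and (c) only on $\mathcal{S}_0$.
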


\begin{proof}
Ibid. Theorem 3.3; Theorem 4.5; Theorem 3.2.
\end{proof}

\subsection{The category $\mathsf{DM}(k)$}

We assume that there exists a good theory of rational Tate classes for some
class $\mathcal{S}{}$ as in the last subsection. Grothendieck's construction
now gives us a Tannakian $\mathbb{Q}{}$-linear category $\Mot(k,\mathbb{Q}{})$
of motives, and we define $\mathsf{DM}(k,\mathbb{Q}{})$ to be its derived category.

Let $\mathbb{Z}{}^{p}=\prod\nolimits_{l\neq p}\mathbb{Z}{}_{l}$. It is
generally believed that the constructions of the $\ell$-adic triangulated
category $\mathsf{D}_{c}^{b}(X,\mathbb{Z}{}_{\ell})$ (\cite{deligne1980},
\cite{ekedahl1990}, \cite{bhattS2013}) will generalize to give an adic
category $\mathsf{D}_{c}^{b}(X,\mathbb{Z}{}^{p})$, but as far as we know, no
proof has been written out. Instead, we give an ad hoc construction of
$\mathsf{D}_{c}^{b}(k,\mathbb{Z}{}^{p})$. Let $\Gamma$ be the absolute Galois
group of $k$. Let $\Lambda=\mathbb{Z}{}^{p}$ and let $\Lambda_{m}=\mathbb{Z}%
{}/m\mathbb{Z}{}$ when $(m,p)=1$. The inverse systems $M=(M_{m})_{m}$ in which
$M_{m}$ is a continuous $\Lambda_{m}\Gamma$-module form an abelian category
whose derived category we denote by $\mathsf{D}(k,\Lambda_{\bullet})$. As in
(\ref{a21a}), there is an obvious \textquotedblleft
completion\textquotedblright\ functor $M\rightsquigarrow\hat{M}\colon
\mathsf{D}(k,\Lambda_{\bullet})\rightarrow\mathsf{D}(k,\Lambda_{\bullet})$. We
define $\mathsf{D}_{c}^{b}(k,\mathbb{Z}{}^{p})$ to be the full subcategory of
$\mathsf{D}(k,\Lambda_{\bullet})$ consisting of complexes $M$ such that
$M\simeq\hat{M}$ and $(\mathbb{Z}{}/\ell\mathbb{Z}{})\otimes_{\Lambda
_{\bullet}}^{L}M$ is a bounded complex with bounded finite-dimensional
cohomology for all primes $\ell\neq p$. The functor $(M_{n})_{n}%
\rightsquigarrow$ $\varprojlim M_{n}(k^{\mathrm{sep}})$ defines a functor%
\[
\alpha\colon\mathsf{D}_{c}^{b}(k,\mathbb{Z}{}^{p})\rightarrow\mathsf{D}%
(\mathbb{\mathbb{Z}{}}^{p}\Gamma)\text{.}%
\]
It follows from the next lemma that the cohomology groups of $\alpha M$ are
finitely presented $\mathbb{Z}{}^{p}$-modules.

\begin{lemma}
\label{c3}The following conditions on a $\mathbb{Z}^{p}$- module are equivalent.

\begin{enumerate}
\item $M$ is of finite presentation.

\item The $\mathbb{Z}{}_{\ell}$-module $\mathbb{Z}{}_{\ell}\otimes
_{\mathbb{Z}{}^{p}}M$ is finitely generated for all $\ell$, the natural map
$M\rightarrow\prod\nolimits_{\ell\neq p}\left(  \mathbb{Z}{}_{\ell}%
\otimes_{\mathbb{Z}{}^{p}}M\right)  $ is an isomorphism, and $\dim
_{\mathbb{F}{}_{\ell}}(M/\ell M)$ is bounded (independently of $\ell$).

\item The natural map $M\rightarrow\varprojlim_{(m,p)}M/mM$ is an isomorphism,
and $\dim_{\mathbb{F}{}_{\ell}}(M/\ell M)$ is bounded (independently of $\ell$).
\end{enumerate}
\end{lemma}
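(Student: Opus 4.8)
The plan is to prove the cycle $\mathrm{(a)}\Rightarrow\mathrm{(b)}\Rightarrow\mathrm{(c)}\Rightarrow\mathrm{(a)}$, using repeatedly the following elementary features of $\mathbb{Z}{}^{p}=\prod_{\ell\neq p}\mathbb{Z}{}_{\ell}$: each $\mathbb{Z}{}_{\ell}$ is a direct factor, so that $\mathbb{Z}{}_{\ell}\otimes_{\mathbb{Z}{}^{p}}(-)$ is the projection $M\rightsquigarrow M_{\ell}$ onto that factor and $\mathbb{Z}{}^{p}/\ell\mathbb{Z}{}^{p}=\mathbb{F}{}_{\ell}$; for $\ell'\neq\ell$ the integer $\ell$ is a unit in $\mathbb{Z}{}_{\ell'}$; finite products commute with the infinite product $\prod_{\ell\neq p}$, so that $\prod_{\ell\neq p}\mathbb{Z}{}_{\ell}^{r}=(\mathbb{Z}{}^{p})^{r}$; and arbitrary products are exact in the category of $\mathbb{Z}{}^{p}$-modules.

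For $\mathrm{(a)}\Rightarrow\mathrm{(b)}$, fix a presentation $(\mathbb{Z}{}^{p})^{r_{1}}\xrightarrow{A}(\mathbb{Z}{}^{p})^{r_{0}}\to M\to0$. Tensoring with $\mathbb{Z}{}_{\ell}$ (right exact) shows $M_{\ell}$ is finitely generated over $\mathbb{Z}{}_{\ell}$; tensoring with $\mathbb{F}{}_{\ell}$ shows $\dim_{\mathbb{F}{}_{\ell}}(M/\ell M)\le r_{0}$, a bound independent of $\ell$; and applying $\prod_{\ell\neq p}$ to the exact sequences $\mathbb{Z}{}_{\ell}^{r_{1}}\xrightarrow{A}\mathbb{Z}{}_{\ell}^{r_{0}}\to M_{\ell}\to0$ recovers, by exactness of products, the original presentation, so the canonical map $M\to\prod_{\ell\neq p}M_{\ell}$ is an isomorphism. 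For $\mathrm{(b)}\Rightarrow\mathrm{(c)}$, from $M\cong\prod_{\ell'}M_{\ell'}$ and the vanishing of $M_{\ell'}/mM_{\ell'}$ for $\ell'\nmid m$ one gets $M/mM\cong\prod_{\ell\mid m}M_{\ell}/\ell^{v_{\ell}(m)}M_{\ell}$; since the integers prime to $p$ are cofinal (for divisibility) among the $\prod_{\ell<N,\,\ell\neq p}\ell^{N}$, the limit is $\varprojlim_{(m,p)}M/mM\cong\prod_{\ell\neq p}\widehat{M_{\ell}}$ with $\widehat{M_{\ell}}$ the $\ell$-adic completion of $M_{\ell}$; as $M_{\ell}$ is finitely generated over the complete noetherian ring $\mathbb{Z}{}_{\ell}$ we have $\widehat{M_{\ell}}=M_{\ell}$, whence $M\to\varprojlim_{(m,p)}M/mM$ is an isomorphism, and $M/\ell M\cong M_{\ell}/\ell M_{\ell}$ is finite-dimensional of bounded dimension.

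For $\mathrm{(c)}\Rightarrow\mathrm{(a)}$, the substantive step: the computation of $M/mM$ above is valid for an arbitrary $\mathbb{Z}{}^{p}$-module, so the hypothesis identifies $M$ with $\prod_{\ell\neq p}\widehat{M}^{(\ell)}$, where $\widehat{M}^{(\ell)}=\varprojlim_{k}M/\ell^{k}M$ is the $\ell$-adic completion of $M$. Each $\widehat{M}^{(\ell)}$ is a $\mathbb{Z}{}_{\ell}$-module, $\ell$-adically separated (a system in $\bigcap_{j}\ell^{j}\widehat{M}^{(\ell)}$ has $k$-th coordinate $0$, since $\ell^{j}$ already kills $M/\ell^{k}M$ for $j\ge k$), and, because $\ell$ is invertible on $\widehat{M}^{(\ell')}$ for $\ell'\neq\ell$, reduction mod $\ell$ of $M=\prod_{\ell'}\widehat{M}^{(\ell')}$ gives $\widehat{M}^{(\ell)}/\ell\widehat{M}^{(\ell)}\cong M/\ell M$, of some dimension $\le d$ uniformly in $\ell$. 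I would then apply the topological Nakayama lemma: any $\ell$-adically separated $\mathbb{Z}{}_{\ell}$-module $C$ with $\dim_{\mathbb{F}{}_{\ell}}(C/\ell C)=n<\infty$ is generated by $n$ elements. Indeed, lifting a basis of $C/\ell C$ to $c_{1},\dots,c_{n}\in C$ gives a continuous $\mathbb{Z}{}_{\ell}$-linear $\phi\colon\mathbb{Z}{}_{\ell}^{n}\to C$ surjective modulo $\ell$; its image is compact, hence closed in the Hausdorff group $C$, so $C/\mathrm{im}(\phi)$ is Hausdorff and $\ell$ acts surjectively on it (from $\mathrm{im}(\phi)+\ell C=C$), forcing $C/\mathrm{im}(\phi)=0$ as an $\ell$-divisible, $\ell$-adically Hausdorff group. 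Applied with $n\le d$, together with the elementary-divisor theorem over $\mathbb{Z}{}_{\ell}$, this writes each $\widehat{M}^{(\ell)}$ as $\Coker\bigl(\mathbb{Z}{}_{\ell}^{d}\xrightarrow{\diag(\ell^{a_{\ell,1}},\dots,\ell^{a_{\ell,d}})}\mathbb{Z}{}_{\ell}^{d}\bigr)$ (conventions $\ell^{0}=1$, $\ell^{\infty}=0$); taking $\prod_{\ell\neq p}$ and using exactness of products presents $M$ as $\Coker\bigl((\mathbb{Z}{}^{p})^{d}\xrightarrow{\diag(b_{1},\dots,b_{d})}(\mathbb{Z}{}^{p})^{d}\bigr)$ with $b_{i}=(\ell^{a_{\ell,i}})_{\ell}\in\mathbb{Z}{}^{p}$, so $M$ is of finite presentation. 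The uniform bound $d$ is precisely what keeps the source of this presentation of finite rank.

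The one point that needs care is the topological Nakayama lemma in the stated generality (separated, not necessarily complete) and the routine but fiddly verification that the iterated $\varprojlim$'s and $\prod$'s commute as used; everything else is bookkeeping.
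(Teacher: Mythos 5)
The paper offers no argument here --- its ``proof'' is the single phrase ``Elementary exercise'' --- so there is nothing to compare your route against; I can only check your argument on its own terms, and it is correct and complete. The cycle $\mathrm{(a)}\Rightarrow\mathrm{(b)}\Rightarrow\mathrm{(c)}\Rightarrow\mathrm{(a)}$ works: the facts you invoke about $\mathbb{Z}^{p}=\prod_{\ell\neq p}\mathbb{Z}_{\ell}$ (idempotent factors, exactness of arbitrary products, $\mathbb{Z}^{p}/\ell\mathbb{Z}^{p}=\mathbb{F}_{\ell}$, CRT giving $M/mM\cong\prod_{\ell\mid m}M/\ell^{v_{\ell}(m)}M$ for arbitrary $M$) all hold, and the identification $\varprojlim_{(m,p)}M/mM\cong\prod_{\ell}\widehat{M}^{(\ell)}$ via the cofinal sequence is a routine limit-of-expanding-products computation. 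The point you flag as delicate --- topological Nakayama for a merely separated (not complete) $\mathbb{Z}_{\ell}$-module --- does go through exactly as you argue: separatedness makes the $\ell$-adic topology Hausdorff, $\phi(\mathbb{Z}_{\ell}^{n})$ is compact hence closed, and $\mathrm{im}(\phi)+\ell^{k}C=C$ for all $k$ then forces $\mathrm{im}(\phi)=C$; completeness of $C$ is never needed because compactness is supplied by the source $\mathbb{Z}_{\ell}^{n}$. The uniform bound $d$ is indeed the only thing preventing the final presentation from having infinite rank, and assembling the diagonal presentations into one over $\mathbb{Z}^{p}$ with entries $b_{i}=(\ell^{a_{\ell,i}})_{\ell}$ is legitimate by exactness of products. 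No gaps.
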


\begin{proof}
Elementary exercise.
\end{proof}

There is an exact functor of triangulated categories $\mathsf{DM}%
(k,\mathbb{Q}{})\rightarrow\mathsf{D}_{c}^{b}(\mathbb{Z}^{p})_{\mathbb{Q}{}%
}\times\mathsf{D}_{c}^{b}(R)_{\mathbb{Q}{}}$, and we define $\mathsf{DM}(k)$
to be the universal object fitting into a diagram%
\[
\begin{tikzcd}
\mathsf{DM}(k)\arrow{r}\arrow{d}&\mathsf{DM}(k,\mathbb{Q})\arrow{d}\\
\mathsf{D}_{c}^{b}(\mathbb{Z}^{p})\times\mathsf{D}_{c}^{b}(R)\arrow{r}
&\mathsf{D}_{c}^{b}(\mathbb{Z}^{p})_{\mathbb{Q}}\times\mathsf{D}_{c}^{b}(R)_{\mathbb{Q}}.
\end{tikzcd}
\]
More precisely, we form the homotopy fibred product of the natural dg
enhancements of the categories, and then pass to the associated triangulated
category (see \S 7). Conjecturally $\mathsf{DM}(k)$ is independent of the
choice of $\mathcal{S}{}$ containing $\mathcal{S}{}_{0}$.

For this category, the \textquotedblleft Tate conjecture\textquotedblright%
\ holds automatically, and so we can apply the results of \S 6.

\subsubsection{Final note}

Contrary to our earlier claim, (\ref{a1}) is not in fact the ultimate
conjecture. The zeta function of an Artin stack is also defined, but it is a
power series in $t$ (not a rational function). To accomodate Artin stacks, one
will need to state a conjecture for a category of \textit{unbounded} motivic
complexes, but we (the authors, and perhaps also the reader) are already exhausted.

\subsubsection{Acknowledgements}

The second author wishes to thank T. Geisser, G. Tabuada, and V. Vologodsky
for their help with Section 7.

\bibliographystyle{cbe}
\bibliography{D:/Current/refs}

\begin{thebibliography}{}

\bibitem[\protect\astroncite{Ayoub}{2013}]{ayoub2013}
{\sc Ayoub, J.} 2013.
\newblock La r{\'e}alisation {\'e}tale et les op{\'e}rations de {G}rothendieck.
\newblock {\em Ann. Sci. \'Ecole Norm. Sup. (4)} .
\newblock to appear.

\bibitem[\protect\astroncite{Beilinson and Vologodsky}{2008}]{beilinsonV2008}
{\sc Beilinson, A. and Vologodsky, V.} 2008.
\newblock A {DG} guide to {V}oevodsky's motives.
\newblock {\em Geom. Funct. Anal.} 17:1709--1787.

\bibitem[\protect\astroncite{Ben-Bassat and Block}{2012}]{benbassatB2012}
{\sc Ben-Bassat, O. and Block, J.} 2012.
\newblock Milnor descent for cohesive dg-categories.
\newblock arXiv:1201.6118.

\bibitem[\protect\astroncite{Besser}{2000}]{besser2000}
{\sc Besser, A.} 2000.
\newblock Syntomic regulators and {$p$}-adic integration. {I}. {R}igid syntomic
  regulators.
\newblock {\em In} Proceedings of the {C}onference on {$p$}-adic {A}spects of
  the {T}heory of {A}utomorphic {R}epresentations ({J}erusalem, 1998), volume
  120, pp. 291--334.

\bibitem[\protect\astroncite{Bhatt and Scholze}{2013}]{bhattS2013}
{\sc Bhatt, B. and Scholze, P.} 2013.
\newblock The pro-\'etale topology for schemes.
\newblock arXiv:1309.1198.

\bibitem[\protect\astroncite{Bondarko}{2011}]{bondarko2011}
{\sc Bondarko, M.~V.} 2011.
\newblock {$\mathbb{Z}[1/p]$}-motivic resolution of singularities.
\newblock {\em Compos. Math.} 147:1434--1446.

\bibitem[\protect\astroncite{Breuning}{2011}]{breuning2011}
{\sc Breuning, M.} 2011.
\newblock Determinant functors on triangulated categories.
\newblock {\em J. K-Theory} 8:251--291.

\bibitem[\protect\astroncite{Chiarellotto and Tsuzuki}{2003}]{chiaT2007}
{\sc Chiarellotto, B. and Tsuzuki, N.} 2003.
\newblock Cohomological descent of rigid cohomology for \'etale coverings.
\newblock {\em Rend. Sem. Mat. Univ. Padova} 109:63--215.

\bibitem[\protect\astroncite{Choudhury}{2012}]{choudhury2012}
{\sc Choudhury, U.} 2012.
\newblock Motives of {D}eligne-{M}umford stacks.
\newblock {\em Adv. Math.} 231:3094--3117.

\bibitem[\protect\astroncite{Cisinski and D{\'e}glise}{2012a}]{cisinskiD2012}
{\sc Cisinski, D.-C. and D{\'e}glise, F.} 2012a.
\newblock Mixed {W}eil cohomologies.
\newblock {\em Adv. Math.} 230:55--130.

\bibitem[\protect\astroncite{Cisinski and D{\'e}glise}{2012b}]{cisinskiD2012b}
{\sc Cisinski, D.-C. and D{\'e}glise, F.} 2012b.
\newblock Triangulated categories of mixed motives.

\bibitem[\protect\astroncite{de~Jong}{1996}]{dejong1996}
{\sc de~Jong, A.~J.} 1996.
\newblock Smoothness, semi-stability and alterations.
\newblock {\em Inst. Hautes \'Etudes Sci. Publ. Math.} pp. 51--93.

\bibitem[\protect\astroncite{D{\'e}glise}{2013}]{deglise2013}
{\sc D{\'e}glise, F.} 2013.
\newblock On the rigidity theorem of {S}uslin and {V}oevodsky.
\newblock http://perso.ens-lyon.fr/frederic.deglise/docs/2013/rigidity.pdf.

\bibitem[\protect\astroncite{Deligne}{1980}]{deligne1980}
{\sc Deligne, P.} 1980.
\newblock La conjecture de {W}eil. {II}.
\newblock {\em Inst. Hautes \'Etudes Sci. Publ. Math.} pp. 137--252.

\bibitem[\protect\astroncite{Deligne}{1982}]{deligne1982}
{\sc Deligne, P.} 1982.
\newblock Hodge cycles on abelian varieties (notes by {J}.{S}. {M}ilne), pp.
  9--100.
\newblock {\em In} Hodge cycles, motives, and {S}himura varieties, Lecture
  Notes in Mathematics. Springer-Verlag, Berlin.

\bibitem[\protect\astroncite{Deligne}{1994}]{deligne1994}
{\sc Deligne, P.} 1994.
\newblock A quoi servent les motifs?, pp. 143--161.
\newblock {\em In} Motives (Seattle, WA, 1991), volume~55 of {\em Proc. Sympos.
  Pure Math.} Amer. Math. Soc., Providence, RI.

\bibitem[\protect\astroncite{Demazure}{1972}]{demazure1972}
{\sc Demazure, M.} 1972.
\newblock Lectures on {$p$}-divisible groups.
\newblock Springer-Verlag, Berlin.

\bibitem[\protect\astroncite{Drinfeld}{2004}]{drinfeld2004}
{\sc Drinfeld, V.} 2004.
\newblock D{G} quotients of {DG} categories.
\newblock {\em J. Algebra} 272:643--691.

\bibitem[\protect\astroncite{Dwork}{1960}]{dwork1960}
{\sc Dwork, B.} 1960.
\newblock On the rationality of the zeta function of an algebraic variety.
\newblock {\em Amer. J. Math.} 82:631--648.

\bibitem[\protect\astroncite{Ekedahl}{1990}]{ekedahl1990}
{\sc Ekedahl, T.} 1990.
\newblock On the adic formalism, pp. 197--218.
\newblock {\em In} The Grothendieck Festschrift, Vol.\ II, volume~87 of {\em
  Progr. Math.} Birkh\"auser Boston, Boston, MA.

\bibitem[\protect\astroncite{Fargues}{2009}]{fargues2009}
{\sc Fargues, L.} 2009.
\newblock Filtration de monodromie et cycles \'evanescents formels.
\newblock {\em Invent. Math.} 177:281--305.

\bibitem[\protect\astroncite{Fontaine}{1992}]{fontaine1992}
{\sc Fontaine, J.-M.} 1992.
\newblock Valeurs sp\'eciales des fonctions {$L$} des motifs.
\newblock {\em Ast\'erisque} pp. Exp.\ No.\ 751, 4, 205--249.
\newblock S{\'e}minaire Bourbaki, Vol. 1991/92.

\bibitem[\protect\astroncite{Gabber}{1983}]{gabber1983}
{\sc Gabber, O.} 1983.
\newblock Sur la torsion dans la cohomologie {$l$}-adique d'une vari\'et\'e.
\newblock {\em C. R. Acad. Sci. Paris S\'er. I Math.} 297:179--182.

\bibitem[\protect\astroncite{Geisser}{2005}]{geisser2005}
{\sc Geisser, T.} 2005.
\newblock Motivic cohomology, {$K$}-theory and topological cyclic homology, pp.
  193--234.
\newblock {\em In} Handbook of {$K$}-theory. {V}ol. 1, 2. Springer, Berlin.

\bibitem[\protect\astroncite{Geisser and Levine}{2000}]{geisserL2000}
{\sc Geisser, T. and Levine, M.} 2000.
\newblock The {$K$}-theory of fields in characteristic {$p$}.
\newblock {\em Invent. Math.} 139:459--493.

\bibitem[\protect\astroncite{Geisser and Levine}{2001}]{geisserL2001}
{\sc Geisser, T. and Levine, M.} 2001.
\newblock The {B}loch-{K}ato conjecture and a theorem of {S}uslin-{V}oevodsky.
\newblock {\em J. Reine Angew. Math.} 530:55--103.

\bibitem[\protect\astroncite{Gordon}{1979}]{gordon1979}
{\sc Gordon, W.~J.} 1979.
\newblock Linking the conjectures of {A}rtin-{T}ate and
  {B}irch-{S}winnerton-{D}yer.
\newblock {\em Compositio Math.} 38:163--199.

\bibitem[\protect\astroncite{Kato and Trihan}{2003}]{katoT2003}
{\sc Kato, K. and Trihan, F.} 2003.
\newblock On the conjectures of {B}irch and {S}winnerton-{D}yer in
  characteristic {$p>0$}.
\newblock {\em Invent. Math.} 153:537--592.

\bibitem[\protect\astroncite{Katz}{1994}]{katz1994}
{\sc Katz, N.~M.} 1994.
\newblock Review of {$l$}-adic cohomology, pp. 21--30.
\newblock {\em In} Motives (Seattle, WA, 1991), volume~55 of {\em Proc. Sympos.
  Pure Math.} Amer. Math. Soc., Providence, RI.

\bibitem[\protect\astroncite{Le~Stum}{2011}]{lestum2011}
{\sc Le~Stum, B.} 2011.
\newblock The overconvergent site.
\newblock {\em M\'em. Soc. Math. Fr. (N.S.)} pp. vi+108 pp. (2012).

\bibitem[\protect\astroncite{Lichtenbaum}{1984}]{lichtenbaum1984}
{\sc Lichtenbaum, S.} 1984.
\newblock Values of zeta-functions at nonnegative integers, pp. 127--138.
\newblock {\em In} Number theory, {N}oordwijkerhout 1983 ({N}oordwijkerhout,
  1983), volume 1068 of {\em Lecture Notes in Math.} Springer, Berlin.

\bibitem[\protect\astroncite{Lichtenbaum}{2005}]{lichtenbaum2005}
{\sc Lichtenbaum, S.} 2005.
\newblock The {W}eil-\'etale topology on schemes over finite fields.
\newblock {\em Compos. Math.} 141:689--702.

\bibitem[\protect\astroncite{Mazza et~al.}{2006}]{mazzaVW2006}
{\sc Mazza, C., Voevodsky, V., and Weibel, C.} 2006.
\newblock Lecture notes on motivic cohomology, volume~2 of {\em Clay
  Mathematics Monographs}.
\newblock American Mathematical Society, Providence, RI.

\bibitem[\protect\astroncite{Milne}{1968}]{milne1968}
{\sc Milne, J.~S.} 1968.
\newblock Extensions of abelian varieties defined over a finite field.
\newblock {\em Invent. Math.} 5:63--84.

\bibitem[\protect\astroncite{Milne}{1975}]{milne1975}
{\sc Milne, J.~S.} 1975.
\newblock On a conjecture of {A}rtin and {T}ate.
\newblock {\em Ann. of Math. (2)} 102:517--533.

\bibitem[\protect\astroncite{Milne}{1986}]{milne1986v}
{\sc Milne, J.~S.} 1986.
\newblock Values of zeta functions of varieties over finite fields.
\newblock {\em Amer. J. Math.} 108:297--360.
\newblock Addendum arXiv:1210.7460.

\bibitem[\protect\astroncite{Milne}{1988}]{milne1988}
{\sc Milne, J.~S.} 1988.
\newblock Motivic cohomology and values of zeta functions.
\newblock {\em Compositio Math.} 68:59--100, 101--102.

\bibitem[\protect\astroncite{Milne}{1994}]{milne1994}
{\sc Milne, J.~S.} 1994.
\newblock Motives over finite fields, pp. 401--459.
\newblock {\em In} Motives (Seattle, WA, 1991), Proc. Sympos. Pure Math. Amer.
  Math. Soc., Providence, RI.

\bibitem[\protect\astroncite{Milne}{2009}]{milne2009}
{\sc Milne, J.~S.} 2009.
\newblock Rational {T}ate classes.
\newblock {\em Moscow Math. J.} 9:111--141.

\bibitem[\protect\astroncite{Milne}{2013}]{milne2013}
{\sc Milne, J.~S.} 2013.
\newblock Special values of zeta functions of varieties (and motives) over
  finite fields ({UM} talk).
\newblock Expanded notes for a two-hour talk at the University of Michigan,
  September 23, 2013. Available on the author's website (sometime).

\bibitem[\protect\astroncite{Milne and Ramachandran}{2004}]{milneR2004}
{\sc Milne, J.~S. and Ramachandran, N.} 2004.
\newblock Integral motives and special values of zeta functions.
\newblock {\em J. Amer. Math. Soc.} 17:499--555.

\bibitem[\protect\astroncite{Milne and Ramachandran}{2005}]{milneR2005}
{\sc Milne, J.~S. and Ramachandran, N.} 2005.
\newblock The de {R}ham-{W}itt and $\mathbb{Z}_p$-cohomologies of an algebraic
  variety.
\newblock {\em Adv. Math} 198:36--42.

\bibitem[\protect\astroncite{Milne and Ramachandran}{2013}]{milneR2013}
{\sc Milne, J.~S. and Ramachandran, N.} 2013.
\newblock The ${p}$-cohomology of algebraic varieties and special values of
  zeta functions.
\newblock {\em Submitted} .

\bibitem[\protect\astroncite{Nakkajima}{2012}]{nakk2012}
{\sc Nakkajima, Y.} 2012.
\newblock Weight filtrations and slope filtrations on the rigid cohomology of a
  variety.
\newblock {\em M\'emoires de la SMF} 130/131:to appear.

\bibitem[\protect\astroncite{Nakkajima and Shiho}{2008}]{nakk2008}
{\sc Nakkajima, Y. and Shiho, A.} 2008.
\newblock Weight filtrations on log crystalline cohomologies of families of
  open smooth varieties, volume 1959 of {\em Lecture Notes in Mathematics}.
\newblock Springer-Verlag, Berlin.

\bibitem[\protect\astroncite{Suh}{2012}]{suh2012}
{\sc Suh, J.} 2012.
\newblock Symmetry and parity in {F}robenius action on cohomology.
\newblock {\em Compos. Math.} 148:295--303.

\bibitem[\protect\astroncite{Suslin and Voevodsky}{1996}]{suslinV1996}
{\sc Suslin, A. and Voevodsky, V.} 1996.
\newblock Singular homology of abstract algebraic varieties.
\newblock {\em Invent. Math.} 123:61--94.

\bibitem[\protect\astroncite{Tabuada}{2010a}]{tabuada2010}
{\sc Tabuada, G.} 2010a.
\newblock Homotopy theory of dg categories via localizing pairs and
  {D}rinfeld's dg quotient.
\newblock {\em Homology, Homotopy Appl.} 12:187--219.

\bibitem[\protect\astroncite{Tabuada}{2010b}]{tabuada2010q}
{\sc Tabuada, G.} 2010b.
\newblock On {D}rinfeld's dg quotient.
\newblock {\em J. Algebra} 323:1226--1240.

\bibitem[\protect\astroncite{Tabuada}{2013}]{tabuada2013}
{\sc Tabuada, G.} 2013.
\newblock Galois descent of additive invariants.
\newblock arXiv:1301.1928.

\bibitem[\protect\astroncite{Tate}{1994}]{tate1994}
{\sc Tate, J.~T.} 1994.
\newblock Conjectures on algebraic cycles in {$l$}-adic cohomology, pp. 71--83.
\newblock {\em In} Motives (Seattle, WA, 1991), volume~55 of {\em Proc. Sympos.
  Pure Math.} Amer. Math. Soc., Providence, RI.

\bibitem[\protect\astroncite{Tsuzuki}{2003}]{tsuzuki2003}
{\sc Tsuzuki, N.} 2003.
\newblock Cohomological descent of rigid cohomology for proper coverings.
\newblock {\em Invent. Math.} 151:101--133.

\bibitem[\protect\astroncite{van~der Put}{1986}]{vanderput1986}
{\sc van~der Put, M.} 1986.
\newblock The cohomology of {M}onsky and {W}ashnitzer.
\newblock {\em M\'em. Soc. Math. France (N.S.)} pp. 4, 33--59.
\newblock Introductions aux cohomologies $p$-adiques (Luminy, 1984).

\bibitem[\protect\astroncite{Voevodsky}{2000}]{voevodsky2000}
{\sc Voevodsky, V.} 2000.
\newblock Triangulated categories of motives over a field, pp. 188--238.
\newblock {\em In} Cycles, transfers, and motivic homology theories, volume 143
  of {\em Ann. of Math. Stud.} Princeton Univ. Press, Princeton, NJ.

\bibitem[\protect\astroncite{Voevodsky}{2010}]{voevodsky2010}
{\sc Voevodsky, V.} 2010.
\newblock Motives over simplicial schemes.
\newblock {\em J. K-Theory} 5:1--38.

\bibitem[\protect\astroncite{Vologodsky}{2012}]{volog2012}
{\sc Vologodsky, V.} 2012.
\newblock Hodge realizations of 1-motives and the derived {A}lbanese.
\newblock {\em J. K-Theory} 10:371--412.

\bibitem[\protect\astroncite{Weil}{1948}]{weil1948}
{\sc Weil, A.} 1948.
\newblock Vari{\'e}t{\'e}s ab{\'e}liennes et courbes alg{\'e}briques.
\newblock Actualit{\'e}s Sci. Ind., no. 1064 = Publ. Inst. Math. Univ.
  Strasbourg 8 (1946). Hermann {\&} Cie., Paris.

\end{thebibliography}

\bigskip James S. Milne,

Mathematics Department, University of Michigan, Ann Arbor, MI 48109, USA,

Email: jmilne@umich.edu

Webpage: \url{www.jmilne.org/math/}

\bigskip Niranjan Ramachandran,

Mathematics Department, University of Maryland, College Park, MD 20742, USA,

Email: atma@math.umd.edu,

Webpage: \url{www.math.umd.edu/~atma/}
\end{document}